\theoremstyle{theorem}
\newtheorem{theo}{Theorem}
\newtheorem{prop}[theo]{Proposition}
\newtheorem{coro}[theo]{Corollary}
\newtheorem{lemm}[theo]{Lemma}
\newtheorem{theon}{Theorem}
\newtheorem{propn}[theon]{Proposition}
\newtheorem{coron}[theon]{Corollary}
\theoremstyle{definition}
\newtheorem{defi}{Definition}
\theoremstyle{remark}
\newtheorem*{rema}{Remark}
\DeclarePairedDelimiter{\abs}{|}{|}
\DeclarePairedDelimiter{\paren}{(}{)}
\DeclarePairedDelimiter{\subgp}{\langle}{\rangle}
\newcommand{\R}{\mathbb{R}}
\renewcommand{\H}{\mathcal{H}}
\renewcommand{\epsilon}{\varepsilon}
\newcommand{\di}{\; \mathrm{d}}
\renewcommand{\O}[1]{O \paren*{#1}}
\DeclareMathOperator{\dist}{\mathrm{dist}}
\renewcommand{\div}[1]{\paren*{\frac{#1}{2}}}
\DeclareMathOperator{\arcsinh}{arcsinh}
\DeclareMathOperator{\arccosh}{arccosh}
\DeclareMathOperator{\Vol}{Vol}
\newcommand{\id}{\mathrm{id}}
\newcommand{\Pwpo}{\mathbb{P}_g^{\mathrm{\scriptsize{WP}}}}
\newcommand{\Ewpo}{\mathbb{E}_g^{\mathrm{\scriptsize{WP}}}}
\newcommand{\Pwp}[1]{\Pwpo \left( #1 \right)}
\newcommand{\Ewp}[1]{\Ewpo \left[ #1 \right]}
\title{The tangle-free hypothesis \protect\\ on random hyperbolic surfaces}
\author[$\dagger$]{Laura Monk}
\author[$\star$]{Joe Thomas}
\affil[$\dagger$]{{\small Universit\'e de Strasbourg, CNRS, IRMA UMR 7501, F-67000 Strasbourg, France}}
\affil[$\star$]{{\small Department of Mathematics, University of Manchester, UK}}
\date{\today}
\begin{document}

\maketitle

\begin{abstract}
  This article introduces the notion of $L$-tangle-free compact hyperbolic surfaces, inspired by the identically named property for regular graphs. Random surfaces of genus $g$, picked with the Weil-Petersson probability measure, are $(a \log g)$-tangle-free for any
  $a<1$. This is almost optimal, for any surface is $(4 \log g + \O{1})$-tangled. We establish various geometric consequences of the tangle-free hypothesis at a scale~$L$, amongst which the fact that
  closed geodesics of length $< \frac L 4$ are simple, disjoint and embedded in disjoint hyperbolic cylinders of width
  $\geq \frac{L}{4}$.
\end{abstract}

\section*{Introduction}
In this article, we introduce the tangle-free hypothesis on compact (connected, oriented) hyperbolic surfaces (without
boundary), and explore some of its geometric implications, with a special emphasis on random surfaces, which we show are
almost optimally tangle-free.

This work follows several recent articles aimed at adapting results on random regular graphs in both geometry and
spectral theory to the setting of random hyperbolic surfaces -- see
\cite{mirzakhani2013,mirzakhani2019,gilmore2019,monk2020,thomas2020,magee2020} for instance. Though the initial
motivation was to provide some useful tools for spectral theory, the results and techniques developed here are purely
geometric.
Several of our results are significant improvements of useful properties of geodesics on compact hyperbolic surfaces,
allowed by the random setting: the length scale at which they apply goes from constant to logarithmic in the
genus. 

A key innovation of this article is finding an elementary geometric condition which is simultaneously easy to prove for
random surfaces, and has far-reaching consequences on their geometry (notably their geodesics) at a large scale. Similar
geometric assumptions have been made recently by Mirzakhani and Petri \cite[Proposition 4.5]{mirzakhani2019} and
Gilmore, Le Masson, Sahlsten and Thomas \cite{gilmore2019}. The use of the tangle-free hypothesis simplifies and
improves the result in \cite{gilmore2019}, and generalises one consequence of \cite[Proposition 4.5]{mirzakhani2019} to
a larger scale.

\subsection*{The tangle-free hypothesis for hyperbolic surfaces}

Let us first define what we mean by tangle-free and contrast it with existing concepts in the graph theoretic and
hyperbolic surface literature. Heuristically speaking, we shall say that a surface is tangle-free if it does not contain
embedded pairs of pants or one-holed tori with `short' boundaries. More precisely, we make the following definition.

\begin{defi}
\label{def:tanglefree}
Let $X$ be a compact hyperbolic surface and $L>0$. Then, $X$ is said to be \textit{$L$-tangle-free} if all embedded
pairs of pants and one-holed tori in $X$ have total boundary length larger than $2L$. Otherwise, $X$ is
\textit{$L$-tangled}.
\end{defi}

To be precise, we emphasise that a pair of pants and a one-holed torus are respectively surfaces of signature $(0,3)$ and $(1,1)$, and the
embedded surfaces we consider have totally geodesic boundary. The total boundary length is defined as the sum of the
length of all the boundary geodesics.  One should note that we could also have defined the notion of tangle-free using
the maximum boundary length (the length of the longest boundary geodesic) and the results of this paper would follow
through (up to changes of constants).

It may not be so clear to the reader why we call such a property \textit{tangle-free}. In order to clarify this, we
prove that, when a surface is tangled, it contains a non-simple geodesic; that is, a \textit{tangled} geodesic in the
literal sense of the word.

\begin{propn}[\cref{prop:exists_geod}]
  Any $L$-tangled surface contains a self-intersecting geodesic of length smaller than $2L + 2 \pi$.
\end{propn}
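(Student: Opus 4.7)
The plan is to split the argument into two cases, depending on whether the $L$-tangled surface $X$ contains an embedded pair of pants or an embedded one-holed torus with total boundary length at most $2L$, and in each case to exhibit an explicit non-simple closed geodesic of controlled length via trace identities in $SL(2,\R)$.

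\textbf{Pair of pants case.} Suppose $X$ contains an embedded pair of pants $P$ with boundary geodesics $\gamma_1, \gamma_2, \gamma_3$ of lengths $\ell_1, \ell_2, \ell_3$ with $\ell_1 + \ell_2 + \ell_3 \leq 2L$. The key observation is that $\pi_1(P) \cong F_2$ and the only free homotopy classes admitting a simple closed curve representative are the three boundary classes. Hence, choosing $A, B \in SL(2,\R)$ representing $\gamma_1, \gamma_2$, the class $[AB^{-1}]$ has a non-simple geodesic representative $\delta \subset P$. Combining $\tr(AB^{-1}) = \tr(A)\tr(B) - \tr(AB)$ with $|\tr(A)| = 2\cosh(\ell_1/2)$, $|\tr(B)| = 2\cosh(\ell_2/2)$, $|\tr(AB)| = 2\cosh(\ell_3/2)$ (since $\gamma_3 = (AB)^{-1}$ up to conjugation), I would obtain
\begin{equation*}
\cosh(\ell(\delta)/2) \leq 2\cosh(\ell_1/2)\cosh(\ell_2/2) + \cosh(\ell_3/2).
\end{equation*}
Expanding via $2\cosh(x)\cosh(y) = \cosh(x+y) + \cosh(x-y)$ and optimizing over $\ell_1 + \ell_2 + \ell_3 \leq 2L$ gives $\cosh(\ell(\delta)/2) \leq 2\cosh(L) + 1$, and a direct computation shows this is strictly less than $\cosh(L+\pi)$ for all $L \geq 0$; hence $\ell(\delta) < 2L + 2\pi$.

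\textbf{One-holed torus case.} Suppose $X$ contains an embedded one-holed torus $T$ with boundary $\gamma$ of length at most $2L$. Since simple non-boundary closed geodesics now exist, I need to select a non-simple class more carefully. Using $\area(T) = 2\pi$, the injectivity radius at every point of $T$ is at most $\arccosh(2)$; comparing the area of the standard collar of $\gamma$ (bounded above by roughly $2$) with $\area(T)$ produces a point $p \in T$ outside this collar at which the shortest non-trivial loop is not in the class of $\gamma$. The geodesic representative $\delta_0 \subset T$ of this class has length at most $2\arccosh(2) < 2\pi$, so if $\delta_0$ is non-simple I take $\delta = \delta_0$ and conclude. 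Otherwise $\alpha := \delta_0$ is a simple non-boundary closed geodesic of length at most $2\arccosh(2)$. Taking a simple $\beta$ with $i(\alpha, \beta) = 1$, the class $[\alpha\beta\alpha\beta^{-1}]$ is primitive in $\pi_1(T)$ but has homology $2[\alpha]$, so its geodesic representative $\delta$ is non-simple. From $\tr(ABAB^{-1}) = \tr(A)^2 - \tr([A, B])$ and $\tr([A, B]) = -2\cosh(\ell_\gamma/2)$ one computes
\begin{equation*}
\cosh(\ell(\delta)/2) = 1 + \cosh(\ell_\alpha) + \cosh(\ell_\gamma/2) \leq 8 + \cosh(L),
\end{equation*}
and $8 + \cosh(L) < \cosh(L + \pi)$ for all $L \geq 0$ yields $\ell(\delta) < 2L + 2\pi$.

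The main obstacle is extracting the clean constant $2\pi$ in both bounds: the trace identities naturally produce expressions of the form $2L + 2\log c$ for various $c$, and some care is needed in the hyperbolic-cosine comparisons to verify they remain below $2L + 2\pi$ uniformly in the boundary lengths. A secondary subtlety in the one-holed torus case is the existence of a short non-boundary closed geodesic, which relies on the fact that the collar of $\gamma$ cannot exhaust the full area $2\pi$ of $T$, leaving room for a point where the injectivity-radius argument forces a loop not homotopic to $\gamma$.
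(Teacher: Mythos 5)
Your pair-of-pants case is essentially the paper's argument: both use the trace (or Fenchel--Nielsen) identity
\begin{equation*}
\cosh\big(\ell(\delta)/2\big) = 2\cosh(\ell_1/2)\cosh(\ell_2/2) + \cosh(\ell_3/2)
\end{equation*}
for the figure-eight geodesic and then a crude estimate; the paper bounds the right-hand side by $3e^L$ to get $2L + 2\log 6$, you optimize over $\ell_1+\ell_2+\ell_3\leq 2L$ to get $2\cosh L + 1 < \cosh(L+\pi)$. Both are fine and give something stronger than the claimed bound.

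The one-holed torus case, however, contains a genuine gap, centred on the step ``the injectivity radius at every point of $T$ is at most $\arccosh 2$'', from which you deduce $\ell(\alpha)\leq 2\arccosh 2$ for a simple interior closed geodesic $\alpha$. The area argument only shows that the injectivity radius at $p$ is $\leq \arccosh 2$ when the disk of radius $\arccosh 2$ around $p$ is \emph{contained in} $T$, i.e.\ when $\dist(p,\partial T)>\arccosh 2$; such a point need not exist once $\ell(\gamma)$ is large. And in fact the conclusion itself is false: in the symmetric one-holed torus with an order-three symmetry permuting $\alpha$, $\beta$, $\alpha\beta$, the Fricke identity $x^3 - 3x^2 = 2 + 2\cosh(\ell(\gamma)/2)$ (with $x = 2\cosh(\ell(\alpha)/2)$) shows the interior systole grows without bound as $\ell(\gamma)\to\infty$, roughly like $\tfrac{2}{3}\ell(\gamma)$. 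So the bound $\ell(\alpha)\leq 2\arccosh 2$, on which your estimate $\cosh(\ell(\delta)/2)\leq 8+\cosh L$ rests, cannot hold for all admissible $\gamma$. (The secondary claim that the shortest loop at a point outside the standard collar is not boundary-parallel is also unjustified as stated.) The paper avoids this entirely by expanding the half-collar $\mathcal{C}_w$ of $\gamma$ inward until the inner boundary $\beta_w$ self-intersects: the area constraint $\ell\sinh w\leq 2\pi$ together with $\ell(\beta_w)=\ell\cosh w$ gives $\ell(\beta_w)=\sqrt{\ell^2 + \ell^2\sinh^2 w}\leq\sqrt{\ell^2+4\pi^2}\leq\ell+2\pi\leq 2L+2\pi$, and the figure-eight geodesic freely homotopic to $\beta_w$ is shorter still. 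You would need to replace your systole estimate with something of this flavour, or at least with a bound on $\ell(\alpha)$ that degrades appropriately with $\ell(\gamma)$, for the one-holed torus case to go through.
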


\subsection*{Tangle-free graphs}

One can motivate the study of this geometric property of surfaces through the medium of regular graphs. Indeed, the naming of
this property is inspired by a similar notion Bordenave introduced in \cite{bordenave2015} in order to prove
Friedman's theorem \cite{friedman2008} regarding the spectral gap of the Laplacian on large regular graphs (we
shall come back to this result in more detail at the end of the introduction).  A graph $G = (V,E)$ is said to be
\textit{$L$-tangle-free} if, for any vertex $v$, the ball for the graph distance $\dist_G$
\begin{equation*}
  \mathcal{B}_L(v) = \left\{ w \in V \, : \, \dist_G(v,w) \leq L \right\},
\end{equation*}
contains at most one cycle.
This definition might seem quite different to the surface definition given above, but we shall prove that balls on tangle-free
surfaces contain at most one `cycle' in the following sense.

\begin{propn}[\cref{prop:neigh_ball_cyl}]
  If a surface $X$ is $L$-tangle-free, then for any point $z \in X$, the ball
  \begin{equation*}
    \mathcal{B}_{\frac L 8}(z) = \left\{ w \in X \, : \, \dist_X(z,w) < \frac L 8 \right\}
  \end{equation*}
  is isometric to a ball in the hyperbolic plane or a hyperbolic cylinder.
\end{propn}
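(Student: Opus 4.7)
The plan is to pass to the universal cover: write $X = \Gamma \backslash \mathbb{H}^2$ for $\Gamma$ a torsion-free cocompact Fuchsian group, fix a lift $\tilde z \in \mathbb{H}^2$ of $z$, and set
\[
S = \{\gamma \in \Gamma \setminus \{e\} : d(\tilde z, \gamma \tilde z) < L/4\}.
\]
A short triangle-inequality argument shows that two distinct points of the hyperbolic ball $B(\tilde z, L/8) \subset \mathbb{H}^2$ project to the same point of $\mathcal{B}_{L/8}(z) \subset X$ if and only if they differ by an element of $S$. Writing $H = \langle S \rangle \leq \Gamma$, it follows that $\mathcal{B}_{L/8}(z)$ is isometric to a ball of radius $L/8$ in $H \backslash \mathbb{H}^2$. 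If $S = \emptyset$ this is a ball in $\mathbb{H}^2$; if $H$ is infinite cyclic --- necessarily generated by a hyperbolic element, as $\Gamma$ is purely hyperbolic --- this is a ball in a hyperbolic cylinder.

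It therefore suffices to exclude the case where $H$ is non-cyclic. Since two non-trivial elements of $\Gamma$ commute if and only if they share an axis and so lie in a common maximal cyclic subgroup, a non-cyclic $H$ contains two non-commuting elements $\gamma_1, \gamma_2 \in S$. As a non-cyclic two-generator subgroup of the surface group $\Gamma$, the group $\langle \gamma_1, \gamma_2 \rangle$ is free of rank two. From $\gamma_i \in S$ the triangle inequality yields the translation length bounds
\[
\ell(\gamma_1),\ \ell(\gamma_2) < L/4, \qquad \ell(\gamma_1 \gamma_2) < L/2, \qquad \ell([\gamma_1, \gamma_2]) < L.
\]
The convex core of $\langle \gamma_1, \gamma_2 \rangle \backslash \mathbb{H}^2$ is therefore a compact hyperbolic surface of Euler characteristic $-1$ with totally geodesic boundary, hence either a pair of pants (with boundary lengths $\ell(\gamma_1), \ell(\gamma_2), \ell(\gamma_1\gamma_2)$ after a suitable change of basis) or a one-holed torus (with boundary length $\ell([\gamma_1, \gamma_2])$). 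In either case, the total boundary length is strictly less than $2L$.

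The principal obstacle is the final embedding step: this subsurface only \emph{immerses} into $X$ via the intermediate cover $\langle \gamma_1, \gamma_2 \rangle \backslash \mathbb{H}^2 \to X$, whereas \cref{def:tanglefree} demands an \emph{embedded} pair of pants or one-holed torus. I would address this through a minimality argument, choosing $\gamma_1, \gamma_2$ so as to minimize $\max_i d(\tilde z, \gamma_i \tilde z)$ among non-commuting pairs in $S$. Any failure of embeddedness would produce some $\delta \in \Gamma \setminus \langle \gamma_1, \gamma_2 \rangle$ sending a point of the convex hull inside itself, and unwinding this identification should supply an element of $S$ independent of $\gamma_1$ but of strictly smaller displacement than $\gamma_2$, contradicting the minimal choice. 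Once embeddedness is secured, the strict inequality on total boundary length directly contradicts the $L$-tangle-free hypothesis, completing the proof.
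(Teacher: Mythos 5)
Your reduction to excluding non-cyclic $H = \langle S \rangle$ is sound: the definition of $S$ and the triangle-inequality argument showing $\mathcal{B}_{L/8}(z)$ is isometric to a ball in $H \backslash \H$ are both correct. However, the step you flag as the principal obstacle is a genuine gap, and the minimality sketch does not close it. The convex hull $\tilde C$ of the limit set of $\langle \gamma_1, \gamma_2 \rangle$ is unbounded in $\H$, so the points $p, q \in \tilde C$ witnessing a failure of embeddedness may lie arbitrarily far from $\tilde z$; the element $\delta \in \Gamma$ with $\delta p = q$ then carries no a priori bound on its displacement at $\tilde z$, and there is no reason for it --- or for anything produced by ``unwinding'' --- to belong to $S$. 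Without membership in $S$, minimising $\max_i d(\tilde z, \gamma_i \tilde z)$ over non-commuting pairs in $S$ yields no contradiction. A secondary issue is that when the convex core is a pair of pants, its boundary curves are $a$, $b$, $(ab)^{-1}$ for \emph{some} free basis $(a,b)$ of $\langle \gamma_1,\gamma_2\rangle$, which need not be $(\gamma_1,\gamma_2)$, so your displacement bounds on $\gamma_1$, $\gamma_2$, $\gamma_1\gamma_2$ do not automatically transfer to $a$, $b$, $ab$.

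The paper sidesteps both problems by working on the surface rather than in the universal cover. It first proves \cref{theo:loops} by induction on the number of self-intersections of $\beta$, building each candidate subsurface as the \emph{regular neighbourhood} of $\delta_z$ together with an arc of $\beta$: a regular neighbourhood is embedded in $X$ by construction, its Euler characteristic is $-1$, and its total boundary length is at most $2(\ell(\delta_z)+\ell(\beta_i)) < 2L$, so the tangle-free hypothesis bites directly. The present proposition is then deduced from \cref{cor:close_Gamma}, which is precisely the statement you are trying to re-derive group-theoretically. To salvage your route you would need either a real argument that embeddedness of the convex core can be forced by a suitable choice of $\gamma_1,\gamma_2$, or, more simply, to abandon the convex core and instead take the regular neighbourhood of two geodesic loops through $z$ representing $\gamma_1$ and $\gamma_2$, which is embedded from the outset.
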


It is worth noting that in the original proof by Friedman \cite{friedman2008}, there is also a notion of `supercritical
tangle' in a graph, which are small subgraphs with many cycles. In a sense, pairs of pants or one-holed tori with small total
boundary lengths can be seen as analogues of these bad tangles for surfaces.

\subsection*{Admissible values of $L$}
Let us now discuss typical values that $L$ can take in \cref{def:tanglefree} both for being tangle-free and
tangled. Throughout, we shall use the notation $A = O(B)$ to indicate that there is a constant $C>0$ such that
$|A|\leq C|B|$ with $C$ independent of all other variables such as the genus.

It is clear that a surface of injectivity radius $r$ is $r$-tangle-free, for it has no closed geodesic of
length smaller than $2r$. In a deterministic setting, it is hard to say much more than this.

On the other hand, we know that a hyperbolic surface of genus $g$ admits a pants decomposition with all boundary
components smaller than the \emph{Bers constant} $\mathcal{B}_g$ -- see \cite[Chapter 5]{buser1992}. We know that
$\mathcal{B}_g \geq \sqrt{6g} - 2$ \cite[Theorem 5.1.3]{buser1992}, and the best known upper bounds on $\mathcal{B}_g$
are linear in $g$ \cite{buser1992a,parlier2014}. All surfaces of genus $g$ are
$\frac{3}{2}\mathcal{B}_g$-tangled. This bound however is rather loose, since it follows from cutting \emph{all} of the
surface into pairs of pants rather than isolating a single short pair of pants. In light of this, we in fact prove the
following, using a method based on Parlier's work \cite{parlier2014}.

\begin{propn}[\cref{prop:upper_bound}]
  Any hyperbolic surface of genus $g$ is $L$-tangled for $L = 4 \log g + O(1)$.
\end{propn}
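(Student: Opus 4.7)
The goal is to exhibit an embedded pair of pants or one-holed torus in $X$ whose total boundary length is at most $2L = 8 \log g + O(1)$. The starting ingredient is the hyperbolic packing bound: every embedded geodesic ball of radius $r$ in $X$ has area $2\pi(\cosh r - 1) \leq \text{Area}(X) = 4\pi(g-1)$, which forces the injectivity radius of $X$ to be at most $\log g + O(1)$ and produces at least one closed geodesic of length at most $2 \log g + O(1)$.

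To build a subsurface rather than a single curve, I would try to find several short geodesic loops at a well-chosen basepoint $z$. Lifting to the universal cover $\mathbb{H}^2$ and performing a volume count on the ball $B_r(\tilde z)$ with $r = 2 \log g + O(1)$, whose area is of order $g^2$ while $\text{Area}(X)$ is of order $g$, shows that the orbit $\pi_1(X,z) \cdot \tilde z$ meets $B_r(\tilde z)$ in many points. This produces several non-trivial elements $\alpha_1, \alpha_2, \ldots \in \pi_1(X, z)$ each of translation distance at most $r$. In the generic case where two of them generate a non-abelian free subgroup, I would take the corresponding closed geodesics $c_1, c_2$ of length at most $r$, form a regular neighborhood of $c_1 \cup c_2$, and extract an embedded subsurface of signature $(0,3)$ or $(1,1)$ by straightening its boundary to geodesic representatives. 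In the degenerate case where all short loops are powers of one primitive element, that primitive is then so short that the standard collar around it already yields a one-holed torus or pair of pants with short boundary, so the claim follows immediately.

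The main obstacle is controlling the boundary length of the extracted subsurface precisely enough to obtain the factor $4$ in $L = 4 \log g + O(1)$. The short loops produced by the packing argument need not be simple or disjoint, so passing to geodesic representatives and selecting the right regular neighborhood requires care; overlapping collars, self-intersections, and the third boundary component of a neighborhood of $c_1 \cup c_2$ all have to be managed. Following Parlier's strategy, the key input is the collar lemma together with a careful choice of basepoint $z$, which together ensure that the extracted $(0,3)$ or $(1,1)$-subsurface has total geodesic boundary length controlled linearly by $\ell(c_1) + \ell(c_2) \leq 4 \log g + O(1)$, yielding the desired $2L = 8 \log g + O(1)$ bound.
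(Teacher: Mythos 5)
Your proposal shares the spirit of the paper's argument (a volume bound forces short curves, and the goal is to build a small pair of pants or one-holed torus around them), but the construction you sketch has genuine gaps, and the paper's route is different enough that it is worth contrasting.

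The orbit-count step does produce many $\alpha \in \pi_1(X,z)$ with $d_{\mathbb{H}}(\tilde z, \alpha\tilde z) \leq 2\log g + O(1)$, but it does not give you two that generate a non-abelian subgroup. If $X$ has a very short systole $s$, then a single cyclic subgroup $\langle T\rangle$ can already account for $\Theta(r/s)$ orbit points in $B_r(\tilde z)$, which can dominate the $\Theta(g)$ count when $s$ is small; so the ``generic case'' is not generic and needs to be earned. Worse, your ``degenerate case'' is handled incorrectly: if all short loops at $z$ are powers of one primitive, the standard collar around the corresponding closed geodesic is a \emph{cylinder}, not a pair of pants or one-holed torus, so the claim does not ``follow immediately'' — you would still have to expand that collar and argue about what topology appears when it ceases to embed. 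That, in fact, is exactly the content you would need, and you never quantify when the collar breaks down. Finally, even in the favourable case, passing from two based geodesic loops (possibly crossing each other and themselves) to an embedded subsurface with total geodesic boundary $\leq 8\log g + O(1)$ is the crux, and you flag it as an obstacle rather than resolve it.

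The paper sidesteps all three problems at once. It takes the systole $\gamma$ (automatically simple), cuts $X$ along it, and uses Parlier's extension lemma (Theorem~3.2 of \cite{parlier2005}) to \emph{lengthen} the two new boundary components to $\ell \in [1, 2\log g + O(1)]$ while only increasing the lengths of curves disjoint from $\gamma$. The lower bound $\ell \geq 1$ is precisely what makes the volume estimate $\ell \sinh w \leq \mathrm{Vol}(X)$ give $w \leq \log g + O(1)$; without it, a tiny systole would spoil the bound — this is the case you did not address. One then expands a half-collar of width $w$ around one boundary component until the isometry fails, and both failure modes (self-intersection, or collision with the other boundary) explicitly yield a pair of pants or one-holed torus of total boundary $\leq 2\ell + 4w \leq 8\log g + O(1)$. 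This is cleaner than a regular-neighbourhood argument because the topology of the expanding half-collar is controlled throughout, so there is no issue with self-intersecting or crossing loops, and the boundary length is read off directly from $\ell$ and $w$.
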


\subsection*{Random graphs and surfaces}

How tangle-free can a \textit{typical} surface be? Can $L$ be much larger than the injectivity radius for a large class
of surfaces? An instructive method to answer these questions is to consider the setting of random
surfaces, and to find an $L$ for which most surfaces are $L$-tangle-free.

For $d$-regular graphs with $n$ vertices, sampled with the uniform probability measure $\mathbb{P}_n^{(d)}$, Bordenave
proved \cite{bordenave2015} that for any real number $0 < a < \frac 1 4$,
\begin{equation*}
  \mathbb{P}_n^{(d)}(G \text{ is } (a \log_{d-1}(n)) \text{-tangle-free})
  \underset{n \rightarrow + \infty}{\longrightarrow} 1.
\end{equation*}
This is a key ingredient in Bordenave's proof of Friedman's theorem \cite{bordenave2015}.

In this article, we will consider the Weil-Petersson probability $\mathbb{P}^{\mathrm{WP}}_g$ on the set of closed
hyperbolic surfaces of genus $g$. However, one should note that there exist other non-equivalent random surface models
such as that of Brooks and Makover~\cite{brooks2004} or Magee, Naud and Puder~\cite{magee2020}. We introduce the model
in detail in \cref{sec:random-surfaces}, and then prove that, in this setting, random surfaces are tangle-free at a scale
$\log g$ with high probability.
\begin{theon}[\cref{theo:random}]
  For any real number $0 < a < 1$,
  \begin{equation*}
    \Pwp{X \text{ is } (a \log g) \text{-tangle-free}} = 1 - O \left( \frac{(\log g)^2}{g^{1-a}} \right). 
  \end{equation*}
\end{theon}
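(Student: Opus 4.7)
The plan is a first-moment estimate. Let $N_{\mathrm{OHT}}(X)$ and $N_{\mathrm{POP}}(X)$ denote the numbers of embedded one-holed tori and pairs of pants in $X$ with total boundary length at most $2L = 2a\log g$. Then $X$ is $L$-tangled iff $N_{\mathrm{OHT}}(X) + N_{\mathrm{POP}}(X) \ge 1$, so by Markov's inequality
\begin{equation*}
  \Pwp{X \text{ is } L\text{-tangled}} \le \Ewp{N_{\mathrm{OHT}}(X)} + \Ewp{N_{\mathrm{POP}}(X)}.
\end{equation*}
I would compute each expectation using Mirzakhani's integration formula on $\mathcal{M}_g$, summing over the topological types of the complement of the embedded subsurface.

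For the one-holed torus case, the boundary of any embedded $T$ is a single simple closed curve $\gamma$ which necessarily separates $X$ into a $(1,1)$-surface and a $(g-1,1)$-surface. Mirzakhani's formula yields
\begin{equation*}
  \Ewp{N_{\mathrm{OHT}}(X)} = \frac{1}{2V_g}\int_0^{2L} \ell \, V_{1,1}(\ell) \, V_{g-1,1}(\ell) \, d\ell.
\end{equation*}
I would bound the integrand using the explicit polynomial $V_{1,1}(\ell) = (4\pi^2 + \ell^2)/48$, Mirzakhani's sinh inequality $V_{g-1,1}(\ell) \le V_{g-1,1}\cdot\sinh(\ell/2)/(\ell/2)$, and the Mirzakhani--Zograf volume asymptotic $V_{g-1,1}/V_g = \O{1/g}$. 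These combine to give integrand $\O{(\ell^2+1)\sinh(\ell/2)/g}$, and integrating up to $\ell = 2L$ produces $\O{L^2 e^L/g} = \O{(\log g)^2/g^{1-a}}$, matching the theorem.

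For embedded pairs of pants $P$, I would enumerate the topological types of the complement $X \setminus P$: (i) connected of signature $(g-2,3)$; (ii) two components of types $(g_1,1)$ and $(g_2,2)$ with $g_1 + g_2 = g-1$; (iii) three components $(g_i,1)$ with $\sum g_i = g$, subject to hyperbolicity $g_i \ge 1$ throughout. For each type, Mirzakhani's formula produces an integral of the form $\int_{\sum \ell_i \le 2L}\prod \ell_i \cdot (\text{volume product})\,d\ell$; the $\prod\ell_i$ factor cancels with the $\ell_i/2$ denominators from the sinh bound. The Mirzakhani--Zograf asymptotics give $V_{g-2,3}/V_g = \O{1/g}$ for (i) and ratios $\O{1/g^2}$ or $\O{1/g^3}$ for (ii)--(iii) thanks to extra factorial factors. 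Computing $\int_{\sum \ell_i \le 2L}\prod \sinh(\ell_i/2)\,d\ell = \O{L^2 e^L}$ by direct integration, the dominant (non-separating) case contributes $\O{(\log g)^2/g^{1-a}}$, while the separating cases are strictly lower order (one must also check that the sum over the $\O{g}$ possible partitions does not spoil this, which is ensured by the geometric decay of the factorial ratios).

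The main obstacle is bookkeeping: tracking symmetry factors and the precise form of Mirzakhani's formula across all topological types of embedded pair of pants, and verifying uniform control of the volume ratios $\prod_i V_{g_i,n_i}/V_g$ over the separating partitions of $g$ in cases (ii) and (iii). Once the Mirzakhani--Zograf asymptotics and sinh bounds are in place, the remainder reduces to the elementary integral estimates above.
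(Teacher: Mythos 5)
Your proposal is essentially the paper's proof: the same first-moment bound via Markov's inequality and Mirzakhani's integration formula, the same enumeration of the topological types of embedded one-holed tori and pairs of pants (complement of signature $(g-1,1)$; $(g-2,3)$; $(g_1,1)\sqcup(g_2,2)$; three $(g_i,1)$ pieces), and the same sinh and volume-ratio bounds leading to $O(L^2 e^L/g)$. The only technical difference is that the paper uses elementary volume inequalities from Mirzakhani's 2013 paper -- including a ready-made bound $\sum_{g_1+g_2=g}V_{g_1,n_1+1}V_{g_2,n_2+1} \leq C_n V_{g,n}/g$ that disposes of your partition-sum worry directly -- rather than invoking Mirzakhani--Zograf asymptotics, and its convention gives $V_{1,1}(\ell) = \ell^2/24 + \pi^2/6$ (the usual factor-of-two discrepancy with your $(4\pi^2+\ell^2)/48$, harmless for the order estimate).
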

Since any surface of genus $g$ is $(4 \log g + O(1))$-tangled, random surfaces are almost as tangle-free as
possible. The scale $\log g$ is a very large scale on a random hyperbolic surface of high genus: by work of
Mirzakhani~\cite{mirzakhani2013} the diameter of such a surface is $\leq 40 \log g$ with high probability. Mirzakhani
and Petri~\cite{mirzakhani2019} also proved that the mean value of its injectivity radius goes to a constant value
$\simeq 0.807$ as $g$ approaches infinity, hence proving that random surfaces of high genus have short closed
geodesics. These closed geodesics do not bound any pair of pants.

\subsection*{Geometric implications of the tangle-free hypothesis}

The $L$-tangle-free hypothesis has various consequences on the local geometry of the surface at a scale (roughly) $L$,
which we explore in \cref{sec:geometry}. This will be particularly interesting when $L$ is large; in the case of random
surfaces notably, where $L = a \log g$ for $a < 1$. All the results are stated for any $L$-tangle-free surface, with a
general $L$ and no other assumption, so that they can be directly applied to another setting in which a tangle-free
hypothesis is established.

First and foremost, we analyse the embedded cylinders around simple closed geodesics. In a hyperbolic surface with no
further geometric assumptions to it, the standard collar theorem \cite[Theorem 4.1.1]{buser1992} proves that the collar
of width $\arcsinh\left(\sinh\left(\ell/2\right)^{-1}\right)$ around a simple closed geodesic of length $\ell$ is an
embedded cylinder; moreover, at this width, disjoint simple closed geodesics have disjoint collars. The width of this
deterministic collar is optimal and very satisfying for small~$\ell$. For larger values of $\ell$ however, it becomes
very poor. Under the tangle-free hypothesis, we are able to obtain significant improvements to the collar theorem that
remedy this issue at larger scales.
\begin{theon}[\cref{theo:neigh_cyl}]
  On a $L$-tangle-free hyperbolic surface, the collar of width $\frac{L - \ell}{2}$ around a closed geodesic of length
  $\ell<L$ is isometric to a cylinder.
\end{theon}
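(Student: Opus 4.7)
I would argue by contradiction: assume that the collar $\mathcal{C}$ of width $w := \frac{L-\ell}{2}$ around $\gamma$ is not isometric to a hyperbolic cylinder. First, $\gamma$ must be simple, since otherwise a regular neighborhood of $\gamma$ already provides an embedded pair of pants or one-holed torus of total boundary $\leq 2\ell < 2L$, contradicting the $L$-tangle-free hypothesis. The failure of $\mathcal{C}$ to be a cylinder then means that the exponential map from the normal disk bundle of $\gamma$ of radius $w$ is not injective, and a standard minimization (taking a shortest geodesic arc from $\gamma$ to $\gamma$ not contained in $\gamma$) produces a geodesic arc $\alpha \subset X$ with endpoints $p, q$ on $\gamma$, orthogonal to $\gamma$ at $p$ and $q$, meeting $\gamma$ only at these endpoints, and of length $|\alpha| \leq 2w = L - \ell$.

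Consider the graph $\Gamma := \gamma \cup \alpha \subset X$, with two vertices ($p$ and $q$) and three edges (the arc $\alpha$ and the two subarcs of $\gamma$). A small regular neighborhood $N_\epsilon$ of $\Gamma$ has Euler characteristic $\chi(N_\epsilon) = 2 - 3 = -1$, so is topologically a pair of pants if $\alpha$ leaves and returns to $\gamma$ on the same side, or a one-holed torus if it switches sides. I would then replace $N_\epsilon$ by the embedded hyperbolic subsurface $Y \subset X$ of the same topological type whose boundary consists of the geodesic representatives of the boundary loops of $N_\epsilon$. Each such loop admits a piecewise-geodesic representative assembled from subarcs of $\gamma$ and copies of $\alpha$ with right-angle corners, and since $\partial N_\epsilon$ traces each edge of $\Gamma$ once on each side, the sum of piecewise-geodesic lengths over all boundary components equals $2(\ell + |\alpha|)$. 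Because the geodesic representative minimizes length in its free homotopy class, the total geodesic boundary length of $Y$ is at most $2(\ell + |\alpha|) \leq 2(\ell + (L - \ell)) = 2L$, contradicting the $L$-tangle-free hypothesis.

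The main technical obstacle is to verify that the boundary loops of $N_\epsilon$ are essential and pairwise non-homotopic in $X$, so that the resulting $Y$ is a genuine embedded pair of pants or one-holed torus with totally geodesic boundary of the expected topological type. I would handle this via standard convex-core arguments applied to the rank-two subgroup of $\pi_1(X)$ generated by $\gamma$ and a loop through $\alpha$, using the minimality of $\alpha$ to rule out degenerate configurations. The extraction of the perpendicular arc $\alpha$ in the first step is itself a routine adaptation of the classical proof of the collar lemma, and the combinatorial case split between pair of pants and one-holed torus parallels the one used in the proof of the earlier tangle-free criteria in the paper.
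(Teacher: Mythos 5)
Your argument follows the paper's own strategy almost exactly: extract an orthogeodesic arc $\alpha$ from the failure of the collar to be a cylinder, take the regular neighbourhood $N_\epsilon$ of $\gamma \cup \alpha$ (Euler characteristic $-1$, hence a topological pair of pants or one-holed torus), replace its boundary by geodesic representatives, and conclude from the length bound $2(\ell + |\alpha|) \leq 2L$. The one place where it diverges is precisely the ``main technical obstacle'' you flag, and there your proposed patch is both too strong and circular. It is too strong because you do not need the boundary loops of $N_\epsilon$ to be pairwise non-homotopic: if two of them are freely homotopic, gluing in the annulus they co-bound just turns the topological pair of pants into a one-holed torus whose total geodesic boundary length is smaller still, and the contradiction with $L$-tangle-freeness persists -- the paper allows for exactly this degeneration. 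It is circular because invoking a convex-core argument for ``the rank-two subgroup generated by $\gamma$ and a loop through $\alpha$'' presupposes that this subgroup is non-elementary, which is essentially the essentiality statement you are trying to prove.

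The paper closes the gap elementarily and without any group theory. In the pair-of-pants case, one boundary component of $N_\epsilon$ is parallel to $\gamma$ (hence essential, since $\gamma$ is a closed geodesic), and each of the other two is homotopic to a piecewise-geodesic curve formed by $\alpha$ and a subarc of $\gamma$, meeting at the two endpoints of $\alpha$ at right angles; such a curve lifts to a geodesic bigon in $\mathbb{H}^2$, which cannot bound a disc, so it is not null-homotopic. In the one-holed-torus case, the single boundary loop cannot be contractible simply because there is no closed hyperbolic surface of signature $(1,0)$. Replacing the boundary loops by the closed geodesics in their (now known to be nontrivial) free homotopy classes then yields the embedded pair of pants or one-holed torus with total boundary length $\leq 2\ell + 4w = 2L$, exactly as you intend. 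With this substitution for your last paragraph, your proof is complete and matches the paper's.
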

This implies that we can find wide collars around geodesics of size $a \log g$, $a < 1$, on random surfaces; as a
comparison, the width of the deterministic collar around such a geodesic decreases like $g^{- \frac a 2}$. By a volume
argument, \cref{theo:neigh_cyl} is optimal up to multiplication of the width by a factor two.

The methodology to prove this result is to examine the topology of an expanding neighbourhood of the geodesic. Since the
two simplest hyperbolic subsurfaces (namely the pair of pants and one-holed torus) cannot be encountered up to a scale $\sim L$
due to the tangle-free hypothesis, the neighbourhood remains a cylinder.

An immediate consequence of this improved collar theorem is a bound on the number of intersections of a closed geodesic
of length $\ell < L$ and any other geodesic of length $\ell'$. We prove in \cref{coro: intersection of simple geodesics}
that two such geodesics intersect at most $1+\frac{\ell'}{L-\ell}$ times (and we can remove the $1$ if the two geodesics
are closed). Therefore, two closed geodesics of length $< \frac L 2$ do not intersect; \cref{prop:disjoint_collar}
furthermore states that the collars of width $\frac L 2 - \ell$ around two such geodesics are disjoint.

As well as the neighbourhood of geodesics, one can look at the geometric consequences that the tangle-free hypothesis
has on the neighbourhood of points. To this end, we explore the set of geodesic loops based at a point on the surface on
length scales up to $L$.  As has already been mentioned above in \cref{prop:neigh_ball_cyl}, which establishes a link
between our tangle-free definition and that of graphs, on an $L$-tangle-free surface, balls of radius $\frac L 8$ are
isometric to balls in either the hyperbolic plane or a hyperbolic cylinder.  There are several ways to prove this
property, some of which are similar to the proof of the improved collar theorem. In order to present different methods,
we rather deduce it from the following slightly more general result.

\begin{theon}[\cref{theo:loops}]
  If $z$ is a point on a $L$-tangle-free surface, and $\delta_z$ is the shortest geodesic loop based at $z$, then any
  other loop $\beta$ based at $z$ such that $\ell(\delta_z) + \ell(\beta) < L$ is homotopic to a power of $\delta_z$.
\end{theon}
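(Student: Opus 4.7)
The plan is to prove the contrapositive: if $\beta$ is not homotopic to a power of $\delta_z$, then $X$ fails to be $L$-tangle-free. Write $\gamma_1 = [\delta_z]$ and $\gamma_2 = [\beta]$ in $\pi_1(X,z)$ and let $\Gamma = \langle \gamma_1, \gamma_2 \rangle$. I would first verify that $\Gamma$ is non-cyclic: if $\Gamma = \langle \alpha \rangle$ and $\gamma_1 = \alpha^m$, then the geodesic loop representative of $\alpha$ based at $z$ would have length at most $\ell(\delta_z)$ while $\ell(\delta_z) \geq |m|\,\ell(\delta_\alpha)$; combined with the hypothesis that $\delta_z$ is the shortest geodesic loop at $z$, this forces $|m| = 1$ and hence $\gamma_2$ a power of $\gamma_1$, a contradiction. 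Since every non-cyclic two-generated subgroup of a closed orientable surface group is free of rank two, $\Gamma \cong F_2$.

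Next, I would pass to the intermediate cover $p \colon Y = \widetilde X / \Gamma \to X$. Each of $\delta_z$ and $\beta$ lifts to a loop at a lift $\widetilde z$ of $z$, and these lifts generate $\pi_1(Y,\widetilde z) = \Gamma \cong F_2$. Hence $Y$ carries a compact convex core $Y^{\mathrm{cc}}$ with totally geodesic boundary, of topological type either the pair of pants or the one-holed torus. The boundary components of $Y^{\mathrm{cc}}$ are the closed geodesics freely homotopic to three of $\gamma_1, \gamma_2, \gamma_1 \gamma_2^{\pm 1}$ in the pants case, or to the single commutator $[\gamma_1, \gamma_2]$ in the one-holed-torus case. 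Using that a closed geodesic minimizes length in its free homotopy class, one obtains in either case the bound
\[
\mathrm{length}(\partial Y^{\mathrm{cc}}) \leq 2\bigl(\ell(\delta_z) + \ell(\beta)\bigr) < 2L.
\]

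The main obstacle is then to show that $p|_{Y^{\mathrm{cc}}} \colon Y^{\mathrm{cc}} \to X$ is an embedding, which is what converts the inequality above into an embedded subsurface violating $L$-tangle-freeness. I would argue by contradiction: if two distinct points of $Y^{\mathrm{cc}}$ had the same image in $X$, there would exist $g \in \pi_1(X,z) \setminus \Gamma$ such that two lifts of $Y^{\mathrm{cc}}$ meet in $\widetilde X$; concatenating paths within $Y^{\mathrm{cc}}$ from $\widetilde z$ to the overlap and back produces a loop at $z$ representing $g$, of length controlled by the intrinsic geometry of $Y^{\mathrm{cc}}$ and hence by $\ell(\delta_z) + \ell(\beta)$. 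Combining this with the minimality of $\delta_z$ should force $g \in \Gamma$, yielding the contradiction. If the direct approach is delicate, a back-up is to invoke the improved collar theorem (proved earlier in the paper) for the closed geodesic representative of $\gamma_1$, whose length is at most $\ell(\delta_z) < L$, in order to constrain how $Y^{\mathrm{cc}}$ sits inside $X$ and thereby secure the embedding.
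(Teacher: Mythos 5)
There is a genuine gap, and it is exactly where you flagged it: the projection $p\colon Y^{\mathrm{cc}} \to X$ need not be an embedding, and the argument you sketch does not close this. The tangle-free hypothesis only constrains \emph{embedded} pairs of pants and one-holed tori, so without injectivity of $p|_{Y^{\mathrm{cc}}}$ the short-boundary subsurface you produce does not contradict anything. Your proposed fix --- that an overlap produces a loop through $z$ representing some $g \in \pi_1(X,z)\setminus\Gamma$ of length controlled by $\ell(\delta_z)+\ell(\beta)$, and that ``minimality of $\delta_z$'' then forces $g \in \Gamma$ --- does not follow: minimality of $\delta_z$ only says every nontrivial loop at $z$ has length at least $\ell(\delta_z)$; it says nothing about whether a short element lies in $\Gamma$. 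In fact the difficulty you have run into is essentially the content of the theorem, so appealing to a general embeddedness principle for convex cores of intermediate covers would make the tangle-free hypothesis superfluous, which cannot be right. The back-up via the improved collar theorem is also unconvincing: the collar theorem controls the neighbourhood of a single short simple closed geodesic, not how a two-generator immersed subsurface sits in $X$; and the simplicity of short geodesics (\cref{coro:simple}) is itself derived from \cref{theo:loops}, so invoking anything of that sort risks circularity.

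The paper's proof sidesteps this entirely by never leaving $X$: it proceeds by induction on the number of self-intersections of $\beta$, and in the base case (simple $\beta$) it considers the regular neighbourhood in $X$ of $\delta_z$ together with an arc $\beta_i$ of $\beta$ between two consecutive meetings with $\delta_z$. Such a regular neighbourhood of an embedded $1$-complex is automatically an embedded subsurface of Euler characteristic $-1$; tightening its boundary then produces an embedded pair of pants or one-holed torus with total boundary length $\leq 2(\ell(\delta_z)+\ell(\beta_i)) < 2L$, to which the tangle-free hypothesis applies directly. That embeddedness-for-free is the step your covering-space argument lacks, and it is what makes the inductive scheme go through.
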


Another consequence of \Cref{theo:loops} is \cref{coro:simple}, which states that any closed geodesic of length $<L$ on
a $L$-tangle-free surface is simple. Put together, these observations imply the following corollary.

\begin{coron}
  On a $L$-tangle-free hyperbolic surface,
  \begin{enumerate}
  \item all closed geodesics of length $< L$ are simple;
  \item all closed geodesics of length $< \frac L 2$ are pairwise disjoint;
  \item all closed geodesics of length $< \frac L 4$ are embedded in pairwise disjoint cylinders of width $\geq \frac L 4$.
  \end{enumerate}
\end{coron}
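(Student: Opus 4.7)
The three items are one-line consequences of the results already established in the excerpt, so my plan is to chain the appropriate references together and verify that the numerical bounds line up.

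Item (1) is exactly \cref{coro:simple}, which the text flags as a direct consequence of \cref{theo:loops}. To expand the deduction: if a closed geodesic $\gamma$ of length less than $L$ had a transverse self-intersection at a point $z$, one would cut $\gamma$ at $z$ into two nontrivial geodesic loops $\alpha, \beta$ based at $z$ with $\ell(\alpha) + \ell(\beta) = \ell(\gamma) < L$. Then \cref{theo:loops} would force both to be homotopic to powers of the shortest loop $\delta_z$ at $z$; their geodesic representatives would commute in $\pi_1(X,z)$, hence lift to translations along a common axis in $\mathbb{H}^2$, forcing the two tangent directions of $\gamma$ at $z$ to coincide up to sign and contradicting transversality.

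For (2), I would apply the intersection bound stated right after \cref{theo:neigh_cyl} (\cref{coro: intersection of simple geodesics}): two closed geodesics of lengths $\ell < L$ and $\ell'$ meet at most $\ell'/(L-\ell)$ times. When both lengths are below $L/2$ this bound is strictly less than $1$, hence zero since the intersection number is a nonnegative integer. Combined with (1), such geodesics are simple and pairwise disjoint.

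For (3), I would combine \cref{theo:neigh_cyl} with \cref{prop:disjoint_collar}. A closed geodesic $\gamma$ of length $\ell < L/4$ is in particular shorter than $L/2$, so by (1) and (2) all such geodesics are simple and pairwise disjoint. \cref{prop:disjoint_collar} ensures that the collars of width $L/2 - \ell > L/4$ around any two of them are disjoint, and each such collar sits inside the cylindrical neighbourhood of width $(L-\ell)/2$ provided by \cref{theo:neigh_cyl}, hence is itself cylindrical. Truncating each to width exactly $L/4 \leq L/2 - \ell$ yields pairwise disjoint cylinders of width at least $L/4$, as required. The only step that is not mere substitution is the transversality argument in (1); once that is in hand, (2) and (3) are just bookkeeping on the thresholds $L/2$ and $L/4$.
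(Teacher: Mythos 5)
Your proof is correct, and for items (2) and (3) it chains \cref{coro: intersection of simple geodesics}, \cref{theo:neigh_cyl} and \cref{prop:disjoint_collar} exactly as the paper intends, with the threshold bookkeeping handled properly.

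For item (1) you re-derive \cref{coro:simple} from \cref{theo:loops}, which is the right starting point, but your concluding step differs from the paper's. The paper observes that both sub-loops are homotopic rel endpoints to powers of $\delta_z$, hence $\gamma$ is freely homotopic to $\gamma_0^j$ where $\gamma_0$ is the simple closed geodesic in the class of $\delta_z$; by uniqueness of geodesic representatives $\gamma = \gamma_0^j$, and primitivity forces $j=\pm 1$, contradicting non-simplicity. You instead pass to $\mathbb{H}^2$ and run a common-axis/tangent-direction argument. This is a legitimate alternative, but two steps are elided. First, the hypothesis of \cref{theo:loops} must be checked for both loops: it holds because $\ell(\delta_z)\leq\min(\ell(\alpha),\ell(\beta))$, so $\ell(\alpha)+\ell(\delta_z)\leq\ell(\alpha)+\ell(\beta)=\ell(\gamma)<L$ and likewise for $\beta$. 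Second, the passage from ``common axis'' to ``tangent directions coincide'' needs the observation that a lift $\tilde z$ of the intersection point actually lies on the common axis $\eta$ of $T_0$: the complete lift of $\gamma$ through $\tilde z$ is the axis of $[\gamma]=T_0^{m+n}$, which equals $\eta$ (note $m+n\neq 0$ since $\gamma$ is non-contractible, and $m,n\neq 0$ since the sub-loops are non-trivial). Only then do the lifts of both sub-loops run along $\eta$, forcing both tangents of $\gamma$ at $z$ to point along $\gamma_0$ and contradicting transversality. With these two clarifications added, the argument is complete, and it in fact sidesteps the paper's implicit appeal to the simplicity of $\gamma_0$.
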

In the random case, this result is an improvement of the very useful collar theorem II \cite[Theorem 4.1.6]{buser1992},
which states that all closed geodesics of length $< 2 \arcsinh 1$ on a hyperbolic surface are simple and do not
intersect.

Short closed geodesics in random hyperbolic surfaces have been studied by Mirzakhani and Petri \cite{mirzakhani2013,
  mirzakhani2019}. One can deduce from \cite[Proposition 4.5]{mirzakhani2019} and Markov's inequality that, for any
fixed $M$,
\begin{equation*}
  1 - \Pwp{\text{all closed geodesics of length } < M \text{ are simple}} \leq \frac{C_M}{g} 
\end{equation*}
for a constant $C_M>0$, when we prove that, for any real number $0 < a < 1$, 
\begin{equation*}
  1 - \Pwp{\text{all closed geodesics of length } < a \log g \text{ are simple}} \leq C \frac{(\log g)^2}{g^{1-a}}
\end{equation*}
for a constant $C>0$.
In order to push the proof in \cite{mirzakhani2019} to a scale $\log g$, one would need to use strong properties of the
Weil-Petersson volumes and deal with technical estimates, while our approach is quite elementary in both the geometric
and probabilistic sense.

As illustrated in \cref{sec:random-surfaces}, the tools used to study random surfaces in the Weil-Petersson setting
require to reduce problems to the study of multicurves. Knowing that all closed geodesics of length $< \frac a 2 \log g$
form a multicurve can be useful to the understanding of other properties of random surfaces.

Furthermore, McShane and Parlier proved in \cite{mcshane2008} that for any $g \geq 2$,
\begin{equation*}
  \Pwp{\text{the \emph{simple} length spectrum has no multiplicity}} = 1,
\end{equation*}
where the simple length spectrum of a surface is the list of all the lengths of its simple closed geodesics.
\cref{coro:simple} then implies the following.
\begin{coro}
  For any $a \in (0,1)$, if $\mathcal{L}(X)$ denotes the length spectrum of $X$, then
  \begin{equation*}
    \Pwp{\mathcal{L}(X) \cap [0, a \log g] \text{ has no multiplicity}} = 1 - O \left(\frac{(\log g)^2}{g^{1-a}} \right).
  \end{equation*}
\end{coro}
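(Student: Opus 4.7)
The plan is to combine three ingredients already available at this point in the paper: (i) Theorem~A, which says that a Weil-Petersson random surface is $(a \log g)$-tangle-free with probability $1 - O((\log g)^2/g^{1-a})$; (ii) the preceding Corollary, which says that on any $L$-tangle-free surface every closed geodesic of length $<L$ is simple (applied with $L = a \log g$); and (iii) the McShane--Parlier result that the simple length spectrum is almost surely multiplicity-free with respect to $\mathbb{P}_g^{\mathrm{WP}}$.

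More concretely, I would introduce the events
\[
A_g = \{ X \text{ is } (a \log g)\text{-tangle-free} \}, \qquad B_g = \{\text{the simple length spectrum of } X \text{ has no multiplicities}\},
\]
and argue that on $A_g \cap B_g$ the conclusion holds. Indeed, on $A_g$ the Corollary (applied to $L = a \log g$) ensures that every element of $\mathcal{L}(X) \cap [0, a \log g]$ is the length of a \emph{simple} closed geodesic, so this portion of the length spectrum is contained in the simple length spectrum; on $B_g$ the latter has no repeated values, hence neither does $\mathcal{L}(X) \cap [0, a\log g]$.

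It then remains only to control $\mathbb{P}_g^{\mathrm{WP}}(A_g \cap B_g)$. By Theorem~A we have $\mathbb{P}_g^{\mathrm{WP}}(A_g^{c}) = O((\log g)^2/g^{1-a})$, and by McShane--Parlier $\mathbb{P}_g^{\mathrm{WP}}(B_g^{c}) = 0$, so a union bound gives
\[
\mathbb{P}_g^{\mathrm{WP}}(A_g \cap B_g) \;\geq\; 1 - \mathbb{P}_g^{\mathrm{WP}}(A_g^{c}) - \mathbb{P}_g^{\mathrm{WP}}(B_g^{c}) \;=\; 1 - O\!\left(\frac{(\log g)^2}{g^{1-a}}\right),
\]
which is the claimed bound. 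There is no genuine obstacle here: the statement is essentially a bookkeeping corollary that pulls together the tangle-free probability estimate, the deterministic simplicity consequence already recorded, and the measure-theoretic input from \cite{mcshane2008}. The only mild point to flag is the logical direction of the implication (a multiplicity in $\mathcal{L}(X) \cap [0, a\log g]$ would have to involve simple geodesics on the event $A_g$, hence contradicts $B_g$), which is what lets a probability-one event on the simple spectrum transfer to the full spectrum restricted to $[0, a \log g]$.
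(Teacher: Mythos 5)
Your proof is correct and matches the route the paper intends: the paper does not spell out a proof but states that the corollary follows from \cref{coro:simple} together with the McShane--Parlier result, and your argument (restrict to the tangle-free event where all geodesics of length $< a\log g$ are simple, intersect with the probability-one event that the simple length spectrum is multiplicity-free, and apply a union bound) is precisely that deduction.
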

This could be surprising since, by the work of Horowitz and Randol, for any compact hyperbolic
surface, the length spectrum has unbounded multiplicity \cite[Theorem 3.7.1]{buser1992}. However, these high multiplicities are
constructed in embedded pairs of pants, and therefore it is natural that their lengths are large for tangle-free
surfaces.

\subsection*{Motivations in spectral theory}

To conclude this introduction we will outline the connection between the geometry of hyperbolic surfaces and their
spectral theory and in particular discuss how the tangle-free hypothesis and its implications on the geometry of
surfaces on $\log g$ scales, which is a crucial scale in spectral theory, could be used to tackle some open problems in
this area. As promised, let us first return to the relation of the tangle-free hypothesis with spectral graph theory and
contrast this with that of surfaces.

\subsubsection*{Friedman's theorem}

Let $G$ be a $d$-regular graph, and $A$ be its adjacency matrix. We will call eigenvalues of $G$ the eigenvalues of the
matrix $A$. They are linked to the eigenvalues of the Laplacian $\Delta$ through the relation
$- A + d \, \mathrm{Id} = \Delta$. The value $d$ is always an eigenvalue of $G$ corresponding to constant functions, and
$-d$ is an eigenvalue if and only if $G$ is bipartite; both $d$ and $-d$ are referred to as trivial
eigenvalues. Friedman's theorem \cite{friedman2008}, first conjectured by Alon \cite{alon1986}, states that for any
$\epsilon>0$,
\begin{equation*}
  \mathbb{P}_n^{(d)} \paren*{\forall \lambda \text{ non-trivial eigenvalue of } G, \abs{\lambda} < 2 \sqrt{d-1} + \epsilon}
  \underset{n \rightarrow + \infty}{\longrightarrow} 1.
\end{equation*}
This means that large random regular graphs have an optimal spectral gap, by a result of Alon \cite{nilli1991}.

Let us compare this to what one may expect of surfaces. We will refer to the spectrum of a compact hyperbolic surface
$X$ as meaning the spectrum of the (positive) Laplace-Beltrami operator $\Delta$ on $X$. It is a non-decreasing sequence
of eigenvalues $(\lambda_n)_{n \geq 0}$, $\lambda_n \geq 0$. The value $\lambda_0=0$ is known as the trivial eigenvalue;
it is simple and the corresponding eigenfunctions are the constant functions. The equivalent surface conjecture of the
Friedman theorem was formulated by Wright \cite{wright2020}, and states that for any small enough $\epsilon > 0$,
\begin{equation*}
  \Pwp{\lambda_1 \geq \frac 1 4 - \epsilon}
  \underset{g \rightarrow + \infty}{\longrightarrow} 1.
\end{equation*}
Note that this conjecture could concern any reasonable probabilistic setting, and the remarkable properties of the
Weil-Petersson model (like Wolpert's magic formula \cite{wolpert1981} and Mirzakhani's integration formula
\cite{mirzakhani2007}) make it an excellent candidate.
Recently, Magee, Naud and Puder \cite{magee2020} have proved 
that if $X$ is a surface such that $\lambda_1(X) \geq \frac{1}{4}$ (such a surface exists \cite{jenni1984}), then for
any $\varepsilon>0$,
\begin{equation*}
  \mathbb{P}_n^{\mathrm{RC}} \left(\lambda_1(Y) \geq \frac{3}{16} - \varepsilon \right)
\underset{n \rightarrow + \infty}{\longrightarrow} 1
\end{equation*}
where $Y$ is sampled uniformly amongst the finite number of degree $n$ covers of $X$. The conjecture with $\frac 14$ instead of
$\frac{3}{16}$ is still open in this setting too.

\subsubsection*{Short cycles on graphs and surfaces}

In spectral theory, when studying large-scale limits ($n \rightarrow + \infty$ for a graph, $g \rightarrow + \infty$ for
a surface), it is important to know that the small-scale geometry of the object will not affect the spectrum.  Often, a
simple assumption to avoid this is to assume the injectivity radius to be large. 

Unfortunately, random regular graphs have an asymptotically non-zero probability of having a small injectivity radius
(see  \cite{wormald1981}). The same occurs with surfaces taken with the Weil-Petersson probability, by work of
Mirzakhani \cite{mirzakhani2013}.  As a consequence, in both cases, if we want to prove results true with probability
approaching $1$ in the large-scale limit, one needs to impose weaker and more typical geometric conditions.

For instance, Brooks and Lindenstrauss \cite{brooks2013} and Brooks and Le Masson \cite{brooks2020} studied eigenfunctions on regular graphs
of size $n \rightarrow + \infty$, under assumptions on the number of cycles up to a certain length $L$. This parameter
$L$ can always be taken to be the injectivity radius, but in the case of random graphs, it can be increased to be of
order $\log n$.  In a recent article of Gilmore, Le Masson, Sahlsten and Thomas \cite{gilmore2019}, a similar geometric
hypothesis on the number of geodesic loops shorter than a scale $L$ based at each point is made, in order to control the
$L^p$-norms of eigenfunctions of the Laplacian on hyperbolic surfaces. The authors prove it holds for random surfaces of
high genus~$g$ at a scale $L = c \log g$, but the proof provides no explicit value of the constant $c>0$.

This limitation could be seen as originating from the methodology used to study the geometry of the surfaces. In
essence, the authors prove that the loop condition is implied by a geometric condition, which is typical. This
condition however is quite complex, and both the proof of its sufficiency and typicality are rather technical, leaving the local
geometry of the random surfaces that are selected to remain quite opaque.

It follows from \cref{cor:close_Gamma} that the constant $c$ in \cite{gilmore2019} can be taken to be any value
$<\frac{1}{4}$. In turn, this improves (and makes precise) the rate of convergence of the probability for which the
$L^p$-norm estimates in \cite{gilmore2019} hold. This is rather demonstrative of the capabilities of the tangle-free geometric condition allowing for a firm grasp over $\log(g)$ scale geometries for spectral theoretic purposes. 

%In fact, one can also think of the tangle-free hypothesis as being akin to the notion of excess for regular graphs. This is the smallest number of edges that one must remove from the graph in order to remove any cycles. It is known (see for example \cite[Proposition 4.1]) that with probability tending to one as the number of vertices $n$ tends to infinity, a random regular graph has a uniformly bounded excess in the $c\log(n)$ neighbourhood of every vertex for some $c>0$. This geometric notion is a crucial aspect in establishing optimal scale sup-norms in the spatial aspect in - it is a core tenet of random regular graphs being almost locally tree-like. If one defines the $L$-excess of a point on a surface as the number of primitive geodesic loops of length at most $L$ based at that point, then an $R$-tangle-free surface has $\frac{R}{2}$-excess at every point on the surface. Intuitively, this means tangle-free surfaces are almost locally ball-like.

\subsubsection*{Benjamini-Schramm convergence}

The notion of Benjamini-Schramm convergence is another way to study spectral properties of graphs and surfaces in the
large-scale limit despite the existence of short cycles. In both cases, there is a general definition of Benjamini-Schramm
convergence of a sequence to a limiting object \cite{benjamini2001,abert2011,abert2017}, but when the limit is the
infinite $d$-regular tree (for graphs) or the hyperbolic plane (for surfaces), the definition is equivalent to a simpler
characterisation.
A sequence of hyperbolic surfaces $(X_g)_g$ converges to the hyperbolic plane if and only if
\begin{equation*}
  \forall R>0, \quad \frac{\Vol (\{ z \in X_g \, : \, \mathrm{injrad}_{X_g}(z) < R \})}{\Vol (X_g)}
  \underset{g \rightarrow + \infty}{\longrightarrow} 0,
\end{equation*}
and the definition for graphs is the same, replacing volumes by cardinalities.

Random graphs and surfaces satisfy this property for an $R$ proportional to $\log n$ and $\log g$ respectively, and this
has consequences on their eigenvalues and eigenfunctions (see Anantharaman, Le Masson \cite{anantharaman2015} and
Anantharaman \cite{anantharaman2017} for graphs, Le Masson, Sahlsten \cite{lemasson2017} and Monk \cite{monk2020} for
surfaces). The notion of Benjamini-Schramm convergence and the tangle-free hypothesis correspond to assuming the objects
have few cycles, but in different ways. The former means that the points which are the base of \emph{at least
  one} short loop are \emph{scarce} on the surface, while the latter implies that \emph{no} point has \emph{more than
  one} loop. Both approaches can be useful in different settings.

\subsection*{Outline of the paper}

The paper is organised as follows: 
\begin{itemize}
\item \Cref{sec:tangled-surfaces}: tangled surfaces have tangled geodesics.
\item \Cref{sec:random-surfaces}: random surfaces are $(a \log g)$-tangle-free for any $a<1$.
\item \Cref{sec:geometry}: geometric consequences of the tangle-free hypothesis.
\item \Cref{sec:upper_bound_L}: any surface of genus $g$ is $(4 \log g + \O{1})$-tangled.
\end{itemize}

\subsection*{Acknowledgements}

The authors would like to thank Nalini Anantharaman, Francisco Arana-Herrera, Etienne Le Masson and Tuomas Sahlsten for
valuable discussions and comments. JT is also grateful for the hospitality of the IRMA at Universit\'{e} de Strasbourg
during February 2020 during which some of this work was conducted.

\section{Tangled surfaces have tangled geodesics}
\label{sec:tangled-surfaces}

The aim in this section is to prove that being tangled implies having a tangled geodesic - that is to say a
\emph{non-simple} closed geodesic of length $\leq 2 L + \O{1}$.

\begin{prop}
  \label{prop:exists_geod}
  Let $X$ be a compact hyperbolic surface and $L >0$.  Assume that $X$ is $L$-tangled. Then, there exists a closed
  geodesic $\gamma$ in $X$ of length smaller than $2L + 2 \pi$ with one self-intersection.
\end{prop}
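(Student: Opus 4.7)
By the definition of $L$-tangled, $X$ contains an embedded subsurface $S$ whose totally geodesic boundary has total length at most $2L$, and $S$ is either a pair of pants or a one-holed torus. In both cases $\pi_1(S) \cong F_2$, so the plan is to exhibit a specific element of $\pi_1(S)$ whose closed geodesic representative is primitive with exactly one self-intersection, and to bound its length via the Fricke trace identity
\[
\mathrm{tr}(AB) + \mathrm{tr}(AB^{-1}) = \mathrm{tr}(A)\,\mathrm{tr}(B)
\]
in $\mathrm{PSL}_2(\mathbb{R})$ together with the relation $|\mathrm{tr}(\gamma)| = 2\cosh(\ell(\gamma)/2)$ for each hyperbolic element.

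If $S$ is a pair of pants $P$ with boundary lengths $\ell_1, \ell_2, \ell_3$ summing to at most $2L$, write $\pi_1(P) = \langle a, b \rangle$ with the three boundary components freely homotopic to $a$, $b$, and $(ab)^{-1}$. The element $ab^{-1}$ is represented by the figure-eight curve that winds around the first two boundaries once each, with exactly one self-intersection. Applying Fricke's identity with the sign convention adapted to a pair of pants, this class has geodesic representative $\delta$ of length satisfying
\[
\cosh(\ell(\delta)/2) = 2\cosh(\ell_1/2)\cosh(\ell_2/2) + \cosh(\ell_3/2).
\]
By choosing the figure-eight around the pair of boundaries that minimizes this expression and bounding each $\cosh$ crudely by an exponential, a direct calculation gives $\ell(\delta) \leq 2L + 2\log 6 < 2L + 2\pi$.

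If $S$ is a one-holed torus $T$ with boundary $\gamma_0$ of length $\ell \leq 2L$, I first produce a non-peripheral simple closed geodesic $\alpha \subset T$ of controlled length $\ell_\alpha$; I expect to do this by taking the geodesic representative of a simple closed curve obtained by concatenating a short orthogeodesic from $\gamma_0$ to itself with an arc of $\gamma_0$, using the finite area of $T$ (equal to $2\pi$ by Gauss--Bonnet) together with the collar lemma to bound $\ell_\alpha$. Cutting $T$ along $\alpha$ yields a pair of pants $P'$ with boundary lengths $\ell_\alpha, \ell_\alpha, \ell$; the figure-eight in $P'$ around the two copies of $\alpha$ lies in the interior of $P'$, and hence survives the regluing as a smooth closed geodesic in $T$ with a single self-intersection. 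In $\pi_1(T) = \langle a, b \rangle$ with $a \leftrightarrow \alpha$ and $[a, b] \leftrightarrow \gamma_0$, this curve belongs to the conjugacy class of $ab^{-1}ab$, which is primitive in $F_2$ and whose abelianization $(2, 0) \in \mathbb{Z}^2$ is not coprime, forcing the curve to be non-simple. The analogous trace computation yields
\[
\cosh(\ell(\delta)/2) = 2\cosh(\ell_\alpha/2)^2 + \cosh(\ell/2),
\]
and a sufficient bound on $\ell_\alpha$, which I identify as the main obstacle, drops $\ell(\delta)$ below $2L + 2\pi$ exactly as in the pair-of-pants case.
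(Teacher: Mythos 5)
Your pair-of-pants case is essentially the paper's argument: both construct the figure-eight geodesic and invoke the trace identity $\cosh(\ell(\delta)/2) = 2\cosh(\ell_1/2)\cosh(\ell_2/2) + \cosh(\ell_3/2)$, then bound crudely to get $\ell(\delta) \leq 2L + 2\log 6$.

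For the one-holed torus your route is genuinely different from the paper's, and this is where the gap lies. You correctly flag bounding $\ell_\alpha$ as the main obstacle, but the construction you propose does not close it. Taking $\alpha$ to be the geodesic representative of (shortest orthogeodesic from $\gamma_0$ to itself) $\cup$ (arc of $\gamma_0$) gives the upper bound $\ell_\alpha \leq \ell/2 + 2w^*$, where $2w^*$ is the orthogeodesic length. The collar lemma forces $w^* \geq \arcsinh\left(\frac{1}{\sinh(\ell/2)}\right)$, which behaves like $\log(4/\ell)$ as $\ell \to 0$. So when the boundary $\gamma_0$ of the embedded one-holed torus is short (which is perfectly compatible with $\ell \leq 2L$), your upper bound on $\ell_\alpha$ is of order $2\log(1/\ell)$ and is unbounded. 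Plugging this into $\cosh(\ell(\delta)/2) = 2\cosh^2(\ell_\alpha/2) + \cosh(\ell/2)$ then gives $\ell(\delta)$ of order $4\log(1/\ell)$, which is \emph{not} controlled by $2L + 2\pi$. In short: a short interior simple closed geodesic does exist in a one-holed torus of bounded area, but your proposed curve is not a witness to it, and the length you can certify blows up exactly when $\gamma_0$ is short.

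The paper avoids this entirely by not cutting along an interior curve at all. It expands the half-collar $\mathcal{C}_w$ around $\gamma_0$ until its inner boundary $\beta_w$ self-intersects, and works directly with $\beta_w$. The area constraint $\ell\sinh w \leq 2\pi$ together with $\ell(\beta_w) = \ell\cosh w$ gives
\[
\ell(\beta_w) = \sqrt{\ell^2 + (\ell\sinh w)^2} \leq \sqrt{\ell^2 + 4\pi^2} \leq \ell + 2\pi \leq 2L + 2\pi,
\]
uniformly in $\ell$, and the self-intersection point of $\beta_w$ produces the figure-eight geodesic directly. Note how the problematic regime $\ell \to 0$ is actually the \emph{easy} case here ($\ell(\beta_w) \leq 2\pi\sqrt{2}$), which is the opposite of what your reduction to a pair of pants via $\alpha$ gives you. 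If you want to salvage your route you would need a genuinely different, uniform upper bound on the interior systole of a one-holed torus with boundary length $\leq 2L$ (such bounds do exist, but they require a separate argument, e.g., a version of Bavard's lemma for bordered surfaces, and the constant matters since you only have $2\pi - 2\log 6 \approx 2.7$ of slack); the collar-expansion argument is cleaner and gives it for free.
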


The geodesic we construct is what is called a \emph{figure eight}. Any non-simple geodesic on a hyperbolic surface has
length greater than $4 \arcsinh 1 \approx 3.52 \ldots$, and this result is sharp (see \cite[Theorem 4.2.2]{buser1992}).

\begin{proof}
  It suffices to prove that there is such a geodesic in any pair of pants or one-holed torus of total boundary length smaller
  than $2L$.
  \begin{figure}[h]
    \centering
    \begin{subfigure}[b]{0.5\textwidth}
      \centering
      \includegraphics[scale=0.4]{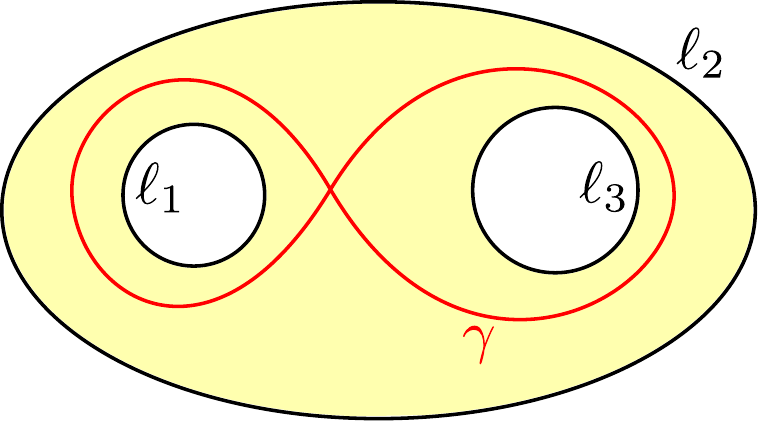}
      \caption{for a pair of pants}
      \label{fig:loops_pants}
    \end{subfigure}%
    \begin{subfigure}[b]{0.5\textwidth}
      \centering
      \includegraphics[scale=0.45]{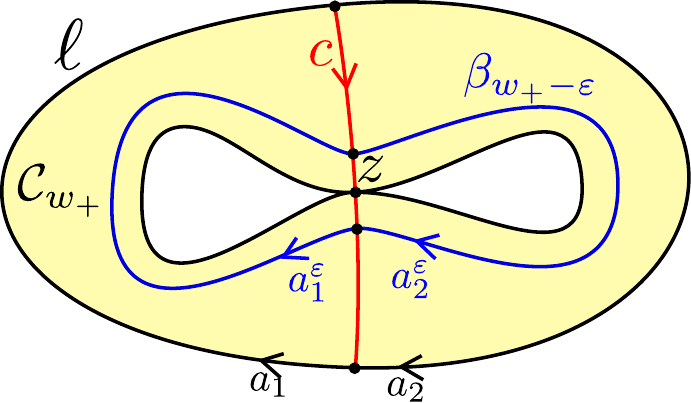}
      \caption{for a one-holed torus}
      \label{fig:loops_handle}
    \end{subfigure}%
    \caption{Construction of a short self-intersecting geodesic.}
    \label{fig:loops}
  \end{figure}
 
  Let us first consider a hyperbolic pair of pants of boundary lengths $\ell_1$, $\ell_2$, $\ell_3$, such that
  $\ell_1 + \ell_2 + \ell_3 < 2L$. We construct a closed curve with one self-intersection as represented in 
  \cref{fig:loops_pants}. By \cite[Formula 4.2.3]{buser1992}, 
  \begin{align*}
    \cosh \div{\ell(\gamma)}
    & = 2 \cosh \div{\ell_1} \cosh \div{\ell_3}
      + \cosh \div{\ell_2} \leq 3 \, e^{L}.
  \end{align*}
  Since $\cosh x \geq \frac{e^x}{2}$, we deduce that the length of $\gamma$ is smaller than $2L + 2\log 6$.
  
  We use a different proof in the one-holed torus case, because we do not have access to several small geodesics
  straight away. Let us study a one-holed torus $T$ of boundary length $\ell \leq 2L$. Let $w >0$, and $\mathcal{C}_w$
  be the $w$-neighborhood of the boundary geodesic
  \begin{equation*}
    \mathcal{C}_w = \{ z \in T \, : \, \dist(z,\partial T) < w \}.
  \end{equation*}
  By the collar theorem \cite[Theorem 4.1.1]{buser1992}, when $w$ is small enough, $\mathcal{C}_w$ is a
  half-cylinder with Fermi coordinates $(\rho,t)$, in which the hyperbolic metric is
  $\mathrm{d} s^2 = \di \rho^2 + \cosh^2 \rho \di t^2$. This isometry has to break down at some point, because the area of the
  one-holed torus is $2 \pi$, and as long as the isometry holds
  \begin{equation}
    \label{eq:area_handle_tangled}
    \Vol (\mathcal{C}_w) = \int_0^\ell \int_0^w \cosh \rho \di \rho \di t = \ell \sinh w \leq 2 \pi.
  \end{equation}
  We pick $w_+$ to be the supremum of the widths for which the isometry holds. By continuity, $w_+$ satisfies inequality
  \eqref{eq:area_handle_tangled}.

  The fact that the isometry ceases implies that there is (at least) one self-intersection point $z$ at the boundary of
  $\mathcal{C}_{w_+}$. By definition, there are two distinct geodesic segments $c_1$, $c_2$ of length $w_+$ from
  $\partial T$ to $z$. Furthermore, these segments are orthogonal to the inner boundary
  $\beta_{w_+} := \partial \mathcal{C}_{w_+} \setminus \partial T$ of the $w_+$-neighbourhood $\mathcal{C}_{w_+}$. By
  minimality of $w_+$, the two tangents of $\beta_{w_+}$ at $z$ are aligned, and therefore the two segments $c_1$, $c_2$
  connect to form a geodesic segment $c$ from $\partial T$ to itself.
  
  The regular neighbourhood of $\partial T$ and $c$ is a topological pair of pants, with three boundary components,
  $\gamma_1$, $\partial T$, $\gamma_2$. Neither $\gamma_1$ nor $\gamma_2$ is contractible because they are freely
  homotopic to geodesic bigons ($c$ and a portion of $\partial T$). Then, replacing $\gamma_1$ and $\gamma_2$ by the
  closed geodesics $\tilde \gamma_1$, $\tilde \gamma_2$ in their respective free homotopy classes yields a decomposition
  of the handle into a pair of pants of boundary components $(\tilde \gamma_1, \partial T, \tilde \gamma_2)$. Let
  $\gamma$ denote the figure-eight geodesic constucted in the pair-of-pants case, which is freely homotopic to
  $a_1 c a_2^{-1} c$, where $a_1$ and $a_2$ are the portions of $\partial T$ delimited by $c$ as represented in
  \cref{fig:loops_handle}. We shall estimate the length of $\gamma$.

  Let $\epsilon > 0$.  
  We observe that the portion $c_\epsilon$ of the geodesic segment $c$ outside of $\mathcal{C}_{w_+-\epsilon}$ is a
  geodesic segment of length $2 \epsilon$, connecting two points of $\beta_{w_+-\epsilon}$. Let $a_1^\epsilon$,
  $a_2^\epsilon$ be the two portions of $\beta_{w_+-\epsilon}$ delimited by $c_\epsilon$. Then, the loop
  $a_1^\epsilon c_\epsilon (a_2^\epsilon)^{-1} c_\epsilon$ is freely homotopic to $a_1 c a_2^{-1} c$ and hence
  $\gamma$. Its length is equal to
  \begin{equation*}
    \ell(\beta_{w_+-\epsilon}) + 4 \epsilon
    = \ell \cosh (w_+ - \epsilon) + 4 \epsilon
    \underset{\epsilon \rightarrow 0}{\longrightarrow} \ell \cosh (w_+).
  \end{equation*}
  By minimality of the geodesic representative in a free homotopy class,
  \begin{equation*}
    \ell(\gamma) \leq \ell \cosh(w_+) = \ell \sqrt{1 + \sinh^2(w_+)} \leq \ell \sqrt{1 + \frac{4 \pi^2}{\ell^2}} \leq 2 L + 2 \pi
  \end{equation*}
  by equation \eqref{eq:area_handle_tangled}, which allows us to conclude.
\end{proof}

\section{Random surfaces are $(a \log g)$-tangle-free}
\label{sec:random-surfaces}

In this section, we will show that, for any $0<a<1$, typical surfaces of genus $g$ are $(a\log g)$-tangle-free. By
typical we mean in the probabilistic sense for the Weil-Petersson model of random surfaces. To be precise we shall
introduce this model briefly here, a more thorough overview can be found in \cite{imayoshi1992} or \cite{wright2020}.

\subsection{Teichm\"uller and moduli spaces}
\label{sec:teichm-moduli}

For integers $g,n$ such that $2g-2+n > 0$, fix a connected and oriented smooth surface $S_{g,n}$ of genus $g$ and with $n$ numbered
boundary components. Let us also fix a length vector $\ell = (\ell_1,\ldots, \ell_n) \in \R^n_{> 0}$. Define the
\textit{Teichm\"{u}ller space} $\mathcal{T}_{g,n}(\ell)$ by
\begin{align*}
  \mathcal{T}_{g,n}(\ell)
  = \left\{(X,f): \parbox{8cm}{$f:S_{g,n}\to X$ diffeomorphism \\
  $X$ hyperbolic surface \\
  $i$-th boundary component of length $\ell_i$ for $1 \leq i \leq n$}\right\}
  \Big/ \sim,
\end{align*}
where $\sim$ is the equivalence relation $(X_1,f_1)\sim (X_2,f_2)$ if and only if there exists an isometry
$h: X_1 \to X_2$ such that $f_2\circ h\circ f_1^{-1}:S_{g,n}\to S_{g,n}$ is isotopic to the identity.

The elements of $\mathcal{T}_{g,n}(\ell)$ are surfaces with a \emph{marking}. Many surfaces are isometric, but have a
different marking. If one wants to pick a random surface, it is more natural to take it in the \textit{moduli space}
\begin{equation*}
  \mathcal{M}_{g,n}(\ell)
  = \left\{ \; \parbox{6.5cm}{hyperbolic surfaces of genus $g$ \\
      with $n$ boundary components \\
      $i$-th component of length $\ell_i$ for $1 \leq i \leq n$}\right\}
  \big/ \text{\small \{isometry\}}
\end{equation*}
where the quotient is over the set of isometries that preserve the $i$-th component setwise, for all
$i \in \{1, \ldots, n\}$.  The moduli space can be obtained as a quotient of the Teichm\"{u}ller space by the action of
the mapping class group
\begin{align*}
  \mathcal{M}_{g,n}(\ell) = \mathcal{T}_{g,n}(\ell)/\mathrm{MCG}(S_{g,n}).
\end{align*}
We recall that $\mathrm{MCG}(S_{g,n})$ is the group of orientation preserving diffeomorphisms of $S_{g,n}$ that setwise
preserve the boundary components of the surface, up to isotopy, and it acts on the Teichm\"uller space by precomposition
of the marking.

In the case when $n = 0$ (and the surface is compact, with no boundary), we will suppress the mention of $n$ (and the
empty vector $\ell$), and write $S_g$, $\mathcal{M}_{g}$, $\mathcal{T}_g$.

\subsection{The Weil-Petersson probability}
\label{sec:weil-peterss}

The Teichm\"uller space $\mathcal{T}_{g,n}(\ell)$ possesses a natural symplectic structure, the \emph{Weil-Petersson form}
$\omega^{\mathrm{WP}}_{g,n,\ell}$, which is invariant under the action of the mapping class group and therefore descends to
the moduli space.

The symplectic form induces a volume form
$\mathrm{dVol}^{\mathrm{WP}}_{g,n,\ell} = \frac{1}{N!} (\omega^{\mathrm{WP}}_{g,n,\ell})^{\wedge N}$ for $N = 3g-3+n$, called the
\textit{Weil-Petersson volume form}.  The volume of the moduli space is a finite quantity
\begin{equation*}
  V_{g,n}(\ell) :=\mathrm{Vol}^{\mathrm{WP}}_{g,n,\ell}(\mathcal{M}_{g,n}(\ell)).\label{eq:1}
\end{equation*}
When $n=0$ (and the surface is compact, with no
boundary), we write $\mathrm{Vol}_g^{\mathrm{WP}}$ and $V_g$ to simplify notations. We will see in the next section why
we need to introduce these volumes for surfaces with boundary components, even when we only want to study boundary-free compact
surfaces.

We can normalise $\mathrm{Vol}^{\mathrm{WP}}_{g}$ and obtain the \textit{Weil-Petersson probability measure}
$\mathbb{P}_{g}^{\mathrm{WP}} = \frac{1}{V_{g}} \mathrm{Vol}^{\mathrm{WP}}_{g}$ on the moduli space $\mathcal{M}_{g}$.
The Weil-Petersson form can be expressed in Fenchel-Nielsen coordinates thanks to Wolpert's theorem
\cite{wolpert1981}. This geometric expression has deep consequences, and is what ultimately allows for explicit
computations in this model.

\subsection{Mirzakhani's integration formula}
\label{sec:mirz-integr-form}

In this subsection, we explain how Mirzakhani's integration formula \cite{mirzakhani2007} can be used to compute
expectations of a certain class of functions known as geometric functions. Knowing how to compute expectations then
allows one to estimate the probability of certain events by, for instance, using Markov's inequality
$\mathbb{P}(|X|>a) \leq \frac{1}{a} \; \mathbb{E}(|X|)$.

\begin{defi}
  A \emph{geometric function} is a function $\mathcal{M}_g \rightarrow \R$ that can be written as:
  \begin{align*}
    F^\Gamma(X) = \sum_{(\gamma_1,\ldots,\gamma_k) \in \mathcal{O}(\Gamma)} F(\ell_X(\gamma_1),\ldots,\ell_X(\gamma_k)),
  \end{align*}
  where:
  \begin{itemize}
  \item $F : \R_{\geq 0}^k \rightarrow \R$ is a positive measurable function
  \item $\Gamma$ is a multi-curve on $S_g$, and $\mathcal{O}(\Gamma)$ is the orbit of $\Gamma$ under the action by
    the mapping class group $\mathrm{MCG}(S_g)$
  \item for a closed curve $\gamma$ on $S_g$ and $(X,f) \in \mathcal{T}_{g}$, $\ell_X(\gamma)$ is the length of the
    unique closed geodesic freely homotopic to the image of $\gamma$ on $X$ under the marking map $f$.
  \end{itemize}
\end{defi}
Though a fixed term of the sum in the previous definition only really makes sense for an element of the Teichm\"uller
space, the summation over the orbit makes it invariant under the action of the mapping class group, and hence a
well-defined function on the moduli space $\mathcal{M}_g$.

The following result is an expression of the integral of any geometric function as an integral over
$\mathbb{R}_{\geq 0}^k$. In order to write the formula, we must understand the surface resulting in cutting $S_g$
by the curves in $\Gamma$. For this, we observe that the cut surface $S_g\setminus\Gamma$ can be written as the
disjoint union $\bigsqcup_{i=1}^q S_{g_i,n_i}$ of its connected pieces.

The $k$ curves of $\Gamma$ form $2k$ boundary components of the cut surface.  If the multi-curve $\Gamma$ had lengths
$\ell \in \R_{\geq 0}^k$ on $X$, then these lengths become the boundary lengths of the surface $X$ cut along $\Gamma$. Each
component $S_{g_i,n_i}$ therefore has a length vector $\ell^{(i)} \in \R_{\geq 0}^{n_i}$.  We then define
\begin{align*}
  V_g(\Gamma, \ell) := \prod_{i=1}^q V_{g_i,n_i}(\ell^{(i)}).
\end{align*}
Mirzakhani's integration formula can then be formulated as follows.

\begin{theo}[\cite{mirzakhani2007}]
\label{thm:mirzakhaniintegral}
Given a multi-curve $\Gamma$ and a function $F:\mathbb{R}^k_{\geq 0}\to\mathbb{R}$ there exists a constant
$0 < C_\Gamma \leq 1$ dependent only on $\Gamma$ for which
\begin{align*}
  \int_{\mathcal{M}_g} F^\Gamma(X) \, \mathrm{dVol}_{g}^{\mathrm{WP}}(X)
  = C_\Gamma \int_{\mathbb{R}^k_{\geq 0}} F(x) \, V_g(\Gamma, \ell)
  \, \ell_1\cdots \ell_k
  \, \mathrm{d} \ell_1 \cdots \mathrm{d} \ell_k.
\end{align*}
\end{theo}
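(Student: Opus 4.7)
The plan is to prove Mirzakhani's formula via an unfolding argument combined with Fenchel-Nielsen coordinates adapted to the multi-curve $\Gamma$ and Wolpert's magic formula. The strategy is essentially: lift the integral from $\mathcal{M}_g$ to a cover on which the sum defining $F^\Gamma$ collapses to a single term, express the Weil-Petersson form in coordinates where the curves of $\Gamma$ become length parameters, and then integrate out the corresponding twist parameters to recover the factor $\ell_1\cdots\ell_k\, V_g(\Gamma,\ell)$.

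\textbf{Step 1 (Unfolding).} Let $\mathrm{Stab}(\Gamma)\subset\mathrm{MCG}(S_g)$ denote the setwise stabiliser of $\Gamma$, so that $\mathcal{O}(\Gamma)\cong\mathrm{MCG}(S_g)/\mathrm{Stab}(\Gamma)$. Introduce the intermediate cover $\mathcal{M}_g^\Gamma:=\mathcal{T}_g/\mathrm{Stab}(\Gamma)$. Since the Weil-Petersson volume form is $\mathrm{MCG}$-invariant it descends to $\mathcal{M}_g^\Gamma$, and a standard unfolding gives
$$\int_{\mathcal{M}_g} F^\Gamma(X)\, \mathrm{dVol}_g^{\mathrm{WP}}(X) = \int_{\mathcal{M}_g^\Gamma} F\bigl(\ell_X(\gamma_1),\ldots,\ell_X(\gamma_k)\bigr)\, \mathrm{dVol}_g^{\mathrm{WP}}(X).$$
After this step the integrand depends only on the $k$ length coordinates $\ell_X(\gamma_i)$.

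\textbf{Step 2 (Fenchel-Nielsen coordinates and Wolpert's formula).} Complete $\Gamma$ to a pants decomposition of $S_g$ and use the associated Fenchel-Nielsen coordinates $(\ell_i,\tau_i)_{i=1}^{3g-3}$ on $\mathcal{T}_g$, indexed so that $i=1,\ldots,k$ correspond to the curves of $\Gamma$. Wolpert's magic formula gives $\omega_g^{\mathrm{WP}}=\sum_{i=1}^{3g-3} d\ell_i\wedge d\tau_i$, so the volume form factors cleanly as $\prod_i d\ell_i\, d\tau_i$. Cutting $X$ along $\Gamma$ decomposes it into pieces $X_1,\ldots,X_q$ of signature $(g_i,n_i)$ with prescribed boundary lengths $\ell^{(i)}$, and the remaining $3g-3-k$ length/twist pairs together with the lengths $\ell^{(i)}$ form Fenchel-Nielsen coordinates on $\prod_{i=1}^q\mathcal{T}_{g_i,n_i}(\ell^{(i)})$.

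\textbf{Step 3 (Integrating the twists and piecewise mapping classes).} The group $\mathrm{Stab}(\Gamma)$ contains the free abelian subgroup $\mathbb{Z}^k$ generated by the Dehn twists $T_{\gamma_i}$, acting by $\tau_i\mapsto\tau_i+\ell_i$ (with all other coordinates fixed), and the product $\prod_i\mathrm{MCG}(S_{g_i,n_i})$ of mapping class groups of the cut pieces (acting trivially on $\ell_i,\tau_i$). Quotienting by the Dehn twists sends each $\tau_i$ to a circle of circumference $\ell_i$, contributing the factor $\ell_1\cdots\ell_k$ upon integration. Quotienting by the piecewise mapping class groups turns each $\mathcal{T}_{g_i,n_i}(\ell^{(i)})$ into the moduli space $\mathcal{M}_{g_i,n_i}(\ell^{(i)})$, and integrating over it produces $V_{g_i,n_i}(\ell^{(i)})$, whose product is $V_g(\Gamma,\ell)$.

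\textbf{Step 4 (The constant $C_\Gamma$ and the main obstacle).} What remains in $\mathrm{Stab}(\Gamma)$ after quotienting by Dehn twists and by the piecewise mapping class groups is a finite group of combinatorial symmetries of $\Gamma$, permuting the curves $\gamma_i$ and/or the isometric cut pieces. The constant $C_\Gamma$ is precisely the reciprocal of the order of this residual symmetry group acting effectively on the length coordinates, so $0<C_\Gamma\leq 1$, with equality when $\Gamma$ has no nontrivial such symmetries. The main technical obstacle is exactly this bookkeeping: disentangling $\mathrm{Stab}(\Gamma)$ into Dehn twists about $\Gamma$, mapping class groups of the cut subsurfaces, and outer symmetries of the combinatorial data of $\Gamma$, and checking that the first two act freely and properly enough for the Fubini-style computation above to be valid. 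Everything else reduces to carefully applying Wolpert's formula and Fubini's theorem.
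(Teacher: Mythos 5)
The paper states this result without proof, simply citing Mirzakhani's 2007 paper, so there is no in-paper argument to compare against. Your sketch faithfully reproduces the structure of Mirzakhani's original proof: unfold the orbit sum to the intermediate cover $\mathcal{T}_g/\mathrm{Stab}(\Gamma)$, use Wolpert's magic formula in Fenchel--Nielsen coordinates adapted to $\Gamma$, integrate the twist parameters around $\Gamma$ to produce $\ell_1\cdots\ell_k$, and integrate the remaining Fenchel--Nielsen coordinates over the moduli of the cut pieces to produce $V_g(\Gamma,\ell)$.

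One point in your Step 4 deserves sharpening. You describe $C_\Gamma$ as the reciprocal of the order of the residual \emph{outer} symmetry group. In Mirzakhani's statement the constant is $C_\Gamma = 2^{-M(\Gamma)}/|\mathrm{Sym}(\Gamma)|$, where $\mathrm{Sym}(\Gamma)$ is the symmetry group of the pair $(S_g,\Gamma)$ and $M(\Gamma)$ counts the components $\gamma_i$ that bound a one-holed torus in $S_g\setminus\Gamma$. The extra factors of $\tfrac12$ come from the hyperelliptic involutions of the one-holed-torus pieces: these are elements of the \emph{piecewise} mapping class group in your decomposition, not outer symmetries, and they act trivially on the Teichm\"uller space of that piece (so they are not detected by $V_{1,1}$) while being nontrivial in $\mathrm{Stab}(\Gamma) \leq \mathrm{MCG}(S_g)$. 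In other words, your three-way decomposition of $\mathrm{Stab}(\Gamma)$ is a short exact sequence rather than a direct product, and the middle factor does not act freely; both defects feed into $C_\Gamma$, not only the outer symmetries. You do flag this bookkeeping as the main obstacle, which is the right instinct --- this is precisely where the care is required.
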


\subsection{Volume estimates}
\label{sec:volume-estimates}

The previous formula indicates that in order to estimate expectations, we need to understand the
asymptotic behaviour of Weil-Petersson volumes. 
In our proof, we will only use a handful of them, grouped in the following Lemma.

\begin{lemm}[Lemmas 3.2 and 3.3 \cite{mirzakhani2013}]
  \label{lemm: volumebounds}
  Given $g,n\geq 0$ such that $2g-2+n>0$,
  \begin{enumerate}
  \item $\ell_1 \ldots \ell_n V_{g,n}(\ell_1,\ldots,\ell_n) \leq 2^n \prod_{i=1}^n \sinh \paren*{\frac{\ell_i}{2}} V_{g,n}$,
  \item $V_{g,n+2} \leq V_{g+1,n}$,
  \item there exists a constant $C$ independent of $g$ and $n$ such that
    \begin{align*}
      V_{g,n} \leq C\frac{V_{g,n+1}}{2g-2+n},
    \end{align*}
  \item there exists a constant $C_{n}$ independent of $g$ such that for any integers $n_1, n_2$ satisfying
    $n_1+n_2=n$, 
    \begin{equation*}
      \sum_{g_1+g_2=g} V_{g_1,n_1+1} V_{g_2,n_2+1} \leq C_{n} \frac{V_{g,n}}{g} \cdot
    \end{equation*}
  \end{enumerate}
\end{lemm}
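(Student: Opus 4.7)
The plan is to establish each of the four inequalities by combining the polynomial structure of $V_{g,n}(\ell)$, established by Mirzakhani, with recursive and topological arguments relating moduli spaces of different signatures. The underlying fact is that
\begin{equation*}
V_{g,n}(\ell_1, \ldots, \ell_n) = \sum_{\alpha \in \N^n} c_\alpha \, \ell_1^{2 \alpha_1} \cdots \ell_n^{2 \alpha_n}
\end{equation*}
where the $c_\alpha \geq 0$ are intersection numbers on the Deligne-Mumford compactification of $\mathcal{M}_{g,n}$, and that $V_{g,n}(0, \ldots, 0) = V_{g,n}$.

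For part (1), I would compare term-by-term against the Taylor expansion $2\sinh(\ell_i/2) = \sum_{k \geq 0} \ell_i^{2k+1}/(2^{2k}(2k+1)!)$. Since each $c_\alpha$ is nonnegative, it suffices to show that the single monomial $\ell_1 \cdots \ell_n \cdot \ell^{2\alpha}$ is pointwise bounded by the corresponding term in the product $2^n \prod_i \sinh(\ell_i/2) \cdot V_{g,n}$; the bookkeeping reduces to verifying an elementary inequality for each boundary coordinate, which after summing over $\alpha$ rebuilds the stated comparison.

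For parts (2) and (3), the arguments are topological. For (2), two of the $n+2$ boundary components of a surface of signature $(g, n+2)$ can be glued together to form a handle, producing a surface of signature $(g+1,n)$; formalizing this via Mirzakhani's recursion, or directly via Wolpert's formula $\omega^{\mathrm{WP}} = \sum d\ell_i \wedge d\tau_i$ applied to the twist parameter, gives the monotonicity after taking the glued boundary length to zero. For (3), adding a small boundary component and integrating over its length relates $V_{g,n+1}$ to $V_{g,n}$ with a factor proportional to the absolute Euler characteristic $2g-2+n$, which can be extracted either from the string or dilaton equations for $\psi$-class intersection numbers, or from Mirzakhani's topological recursion applied to a boundary of vanishing length.

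Part (4) is expected to be the main obstacle. One needs to prove that summation over all genus splittings $g_1 + g_2 = g$ of the products $V_{g_1, n_1+1} V_{g_2, n_2+1}$ only produces a factor $V_{g,n}/g$, which is strictly better than what a naive application of (2) and (3) term-by-term would yield. My strategy would be to iterate (2) to reduce each product to a single volume $V_{g, n_1 + n_2 + 2}$ with the loss concentrated on the smaller genus factor, then use (3) to convert $n_1 + n_2 + 2 = n+2$ extra boundary components into two factors of $1/(2g-2+n+j)$ that combine to the required $1/g$ saving. The genuinely delicate point is obtaining a constant $C_n$ depending only on $n$ and not on $g$; this requires tracking the combinatorial factors uniformly across all splittings, and I expect the bulk of the technical effort to go here.
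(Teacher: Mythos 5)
This lemma is not proved in the paper at all: it is imported wholesale from Mirzakhani's 2013 article (Lemmas 3.2 and 3.3 there), so there is no in-paper argument to compare yours against. With that caveat, a few remarks on your sketch.

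For (1), you correctly reduce to the nonnegativity of the coefficients in $V_{g,n}(\ell)=\sum_\alpha c_\alpha\,\ell^{2\alpha}$ and a term-by-term comparison with the Taylor expansion of $2\sinh(\ell_i/2)$. But the inequality you then need, namely $c_\alpha\,2^{2|\alpha|}\prod_i(2\alpha_i+1)!\le c_0=V_{g,n}$ for every $\alpha$, is the real content of Mirzakhani's Lemma 3.2 and is not elementary bookkeeping: it comes from identifying $c_\alpha$ with intersection numbers of $\psi$- and $\kappa$-classes on $\overline{\mathcal{M}}_{g,n}$ and comparing them with $\kappa_1$-powers. Calling it ``an elementary inequality for each boundary coordinate'' is where your argument actually has a gap. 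Your heuristics for (2) (gluing two boundary circles into a handle and sending the glued length to zero) and (3) (string/dilaton relations or topological recursion at vanishing boundary length, giving a factor $\sim 2g-2+n$) are the right ideas, and both are essentially what Mirzakhani does.

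Part (4) is where the plan is likeliest to fail as written. Iterating (2) and (3) controls each individual product $V_{g_1,n_1+1}V_{g_2,n_2+1}$, but the sum has on the order of $g$ terms, so applying a uniform bound term by term cannot by itself produce the $1/g$ saving, let alone a constant $C_n$ independent of $g$. The actual proof must show that the sum concentrates on the extreme splittings where one of $g_1,g_2$ is $O(1)$ and that the remaining terms decay fast enough to be summable; this uses ratio estimates for $V_{g,n}$ as $g$ varies (of Mirzakhani--Zograf type, beyond the coarse inequalities in (2) and (3)). You flag (4) as the hard part, which is right, but the specific mechanism you propose does not supply the needed decay.
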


\subsection{Probabilistic result}
\label{sec:probabilistic-result}

We can now state and prove our probabilistic result.

\begin{theo}
  \label{theo:random}
  For any real number $0 < a < 1$,
  \begin{equation*}
    \Pwp{X \text{ is } (a \log g) \text{-tangle-free}} = 1 - O \left( \frac{(\log g)^2}{g^{1-a}} \right). 
  \end{equation*}
\end{theo}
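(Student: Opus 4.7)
The plan is a first-moment (Markov) argument. Set $L=a\log g$ and let $N(X)$ count the embedded pairs of pants and the embedded one-holed tori in $X$ whose total boundary length is at most $2L$; since an embedded subsurface has distinct boundary geodesics in $X$, I need not worry about degenerate configurations in which two boundary circles of a pair of pants are identified. By \cref{def:tanglefree}, $X$ is $L$-tangled exactly when $N(X)\geq 1$, so Markov's inequality yields
\[\Pwp{X\text{ is }L\text{-tangled}} \leq \Ewp{N(X)}.\]

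I would then decompose $\Ewp{N(X)}$ according to the topological type of the boundary multi-curve $\Gamma$ of the subsurface. For an embedded one-holed torus, $\Gamma$ is a single separating curve whose complement is of type $S_{g-1,1}$. For an embedded pair of pants, $\Gamma$ consists of three disjoint simple closed curves whose complement is either connected of type $S_{g-2,3}$, or splits as $S_{g_1,1}\sqcup S_{g_2,2}$ with $g_1+g_2=g-1$, or as three pieces $S_{g_i,1}$ with $g_1+g_2+g_3=g$ (all $g_i\geq 1$). Each case is a single mapping class group orbit, and applying \cref{thm:mirzakhaniintegral} with $F$ the indicator of the simplex $\{\ell_1+\cdots+\ell_k\leq 2L\}$ rewrites the corresponding contribution to $\Ewp{N(X)}$ as an explicit integral of $V_g(\Gamma,\ell)\,\ell_1\cdots\ell_k$ over that simplex (with $k=1$ or $3$).

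From here the estimate is mechanical. Part~(1) of \cref{lemm: volumebounds} replaces $\ell_1\cdots\ell_k V_{g',n'}(\ell_1,\ldots,\ell_k)$ by $2^k V_{g',n'}\prod_i\sinh(\ell_i/2)$; the bound $\sinh(x)\leq e^x/2$, followed by integration in the cumulative variable $s=\sum\ell_i$ (with Jacobian proportional to $s^{k-1}$), reduces the remaining integral to order $L^{k-1}e^L$, and the polynomial factor $V_{1,1}(\ell)=\O{L^2}$ in the torus case still leaves $\O{L^2 e^L}$. On the volume side, parts~(2)--(3) chain into
\[V_{g-2,3}\leq V_{g-1,1}\leq\frac{C\,V_{g-1,2}}{2g-3}\leq\frac{C\,V_g}{2g-3}=\O{\frac{V_g}{g}},\]
and similarly $V_{g-1,1}/V_g=\O{1/g}$. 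Therefore the non-separating pants and one-holed torus types each contribute $\O{L^2 e^L/g}=\O{(\log g)^2/g^{1-a}}$ (using $e^L=g^a$), matching the claimed rate. For the two remaining pants types, part~(4) of \cref{lemm: volumebounds} provides an extra $1/g$ (iterated if needed) when summing over genus splittings, so their contributions are of strictly lower order.

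The main technical point is the combinatorial bookkeeping: every topological type of the boundary multi-curve must be enumerated and matched with the appropriate volume estimate from \cref{lemm: volumebounds}. In particular, one must verify that the $\O{g^2}$ three-piece separating configurations do not accumulate to overwhelm the leading term, which is precisely what part~(4) prevents. The exponent $1-a$ then emerges cleanly from the tension between $e^L=g^a$ (from the $\sinh$ weights) and $V_?/V_g=\O{1/g}$ (loss from cutting along $\Gamma$), while the power $(\log g)^2$ corresponds to the two-dimensional face of the cutoff simplex $\{\sum\ell_i\leq 2L\}$.
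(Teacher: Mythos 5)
Your argument is correct and matches the paper's proof essentially step for step: Markov's inequality applied to the counting function, the enumeration of the four topological types of boundary multicurve (separating one-holed torus; non-separating pants with complement $S_{g-2,3}$; pants splitting off $S_{g_1,1}\sqcup S_{g_2,2}$; pants splitting off three one-holed pieces), Mirzakhani's integration formula, the $\sinh$-bound from part (1) of the volume lemma, and the chaining of parts (2)--(4) to get the $V_?/V_g = \O{1/g}$ (or better) ratios. Your identification of the sources of the $(\log g)^2$ factor (the cumulative-variable simplex integral, or the explicit polynomial $V_{1,1}$ in the torus case) and the $g^{1-a}$ factor (tension between $e^L=g^a$ and the $1/g$ volume loss) is also exactly right. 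The only nitpick is that "each case is a single mapping class group orbit" should read "each fixed genus-splitting within a case is a single orbit" — cases (iii) and (iv) each contain $\O{g}$ or $\O{g^2}$ distinct orbits indexed by the splitting, which is precisely why the sum over splittings and part (4) of the lemma are needed; since you invoke both, this is only a wording issue.
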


%Although it is not strictly necessary, we will use a Lemma from Mirzakhani in order to prove \cref{theo:random}. 
%
%\begin{lemm}[Theorem 4.4 \cite{mirzakhani2013}]
%  \label{lemm:sepsyst}
%  For any real number $0 < a < 1$,
%  \begin{equation*}
%    \Pwp {X \text{ has a separating simple geodesic of length } \leq 2a \log g} = O\left(\frac{(\log g)^2}{g^{1-a}}\right).
%  \end{equation*}
%\end{lemm}

%\Cref{lemm:sepsyst} allows us to shorten the proof. Indeed, amongst the four topological situations presented in
%\cref{fig:topological_ways}, three have a separating geodesic. It is also possible to treat all the cases as we
%treat the fourth one; this approach leads to the same conclusion.

\begin{proof}
  Let us list all the topological types of embedded one-holed tori or pair of pants in a genus $g$ surface (see
  \ref{fig:topological_ways}):
  \begin{enumerate}
  \item[(i)]  a curve separating a one-holed torus;
  \item[(ii)] three curves cutting $S_g$ into a pair of pants and a component $S_{g-2,3}$;
  \item[(iii)] three curves cutting $S_g$ into a pair of pants and  two components $S_{g_1,1}$ and $S_{g_2,2}$ such that $g_1+g_2=g-1$;
  \item[(iv)] three curves cutting $S_g$ into a pair of pants and three connected components $S_{g_1,1}$, $S_{g_2,1}$ and
    $S_{g_3,1}$ with $1 \leq g_1 \leq g_2 \leq g_3$ and $g_1+g_2+g_3 = g$.
  \end{enumerate}
    \begin{figure}[h!]
    \centering
    \includegraphics[scale=0.5]{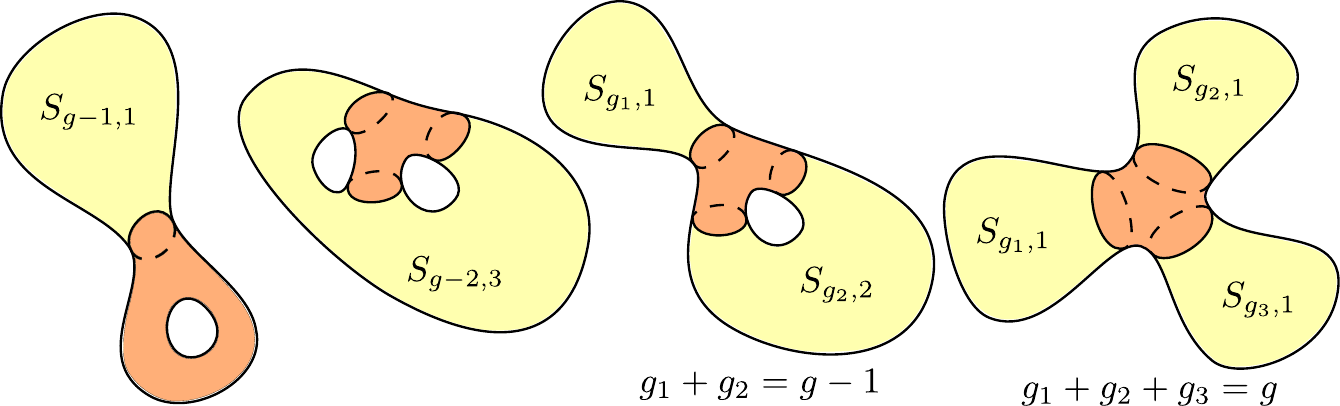}
    \caption{The different topological ways to embed a one-holed torus or pair of pants in a surface of genus $g$.}
    \label{fig:topological_ways}
  \end{figure}

  For any topological situation, we will consider a multicurve $\alpha$ on the base surface $S_g$ realising the
  topological configuration and study the counting function
  \begin{equation*}
    N^\alpha_L(X) = \# \{ \beta \in \mathcal{O}(\alpha) \; : \; \ell_X(\beta) \leq 2L \},
  \end{equation*}
  where the length of a multi-curve is defined as the sum of its components.
  Then, the probability of finding a component in the topological situation $\alpha$ of total boundary length $\leq 2L$
  can be bounded by Markov's inequality:
  \begin{equation*}
    \Pwp{N^\alpha_L(X) \geq 1} \leq \Ewp{N^\alpha_L(X)}.
  \end{equation*}
  We observe that $N^\alpha_L(X)$ is a geometric function, and its expectation can therefore be computed using
  Mirzakhani's integration formula \eqref{thm:mirzakhaniintegral}. This reduces the problem to estimating integrals with
  Weil-Petersson volumes, which we will now detail.

  In case (i), the integral that appears is
  \begin{equation*}
    \int_0^{2L} V_{1,1}(\ell)V_{g-1,1}(\ell) \, \ell\di\ell.
  \end{equation*}
  From \cite{naatanen1998}, it is known that $V_{1,1}(\ell) = \frac{\ell^2}{24} + \frac{\pi^2}{6}$.
  Moreover, by \cref{lemm: volumebounds},
  \begin{align*}
    \ell V_{g-1,1}(\ell) \leq 2e^{\frac{\ell}{2}}V_{g-1,1}.
  \end{align*}
  It follows that the probability is smaller than
  \begin{equation*}
    \frac{V_{g-1,1}}{V_g}\int_0^{2L} 2\left( \frac{\ell^2}{24} + \frac{\pi^2}{6} \right) e^{\frac{\ell}{2}}\di\ell
    = O\left(\frac{V_{g-1,1}}{V_g}L^2e^L\right)
    = O \left( \frac{(\log g)^2}{g^{1-a}} \right)
  \end{equation*}
  where the last bound is deduced from \cref{lemm: volumebounds} parts (2) and (3) and taking $L = a \log g$.

  In case (ii), the integral that appears is
  \begin{equation*}
    \frac{1}{V_g} \iiint_{0 \leq \ell_1 + \ell_2 + \ell_3 \leq 2L} V_{0,3}(\ell_1,\ell_2,\ell_3)
    V_{g-2,3}(\ell_1,\ell_2,\ell_3) \, \ell_1 \ell_2 \ell_3 \di \ell_1 \di \ell_2 \di \ell_3.
  \end{equation*}
  Due to the fact that $V_{0,3}(\ell_1,\ell_2,\ell_3) = 1$ and by \cref{lemm: volumebounds}(1), we need to estimate
  \begin{equation*}
    \frac{V_{g-2,3}}{V_g} \iiint_{0 \leq \ell_1 + \ell_2 + \ell_3 \leq 2L}
    \exp \paren*{\frac{\ell_1 + \ell_2 + \ell_3}{2}} \di \ell_1 \di \ell_2 \di \ell_3
    = O \left( \frac{(\log g)^2}{g^{1-a}} \right)
  \end{equation*}
  by \cref{lemm: volumebounds} (2-3).

  Let us now bound the sum of all the topological situations of case (iii). By the same manipulations, we obtain that
  the probability is
  \begin{equation*}
    O \left( \frac{L^2 e^L}{V_g} \sum_{g_1 + g_2 = g-1} V_{g_1,1} V_{g_2,2} \right)
    = O \left( \frac{(\log g)^2}{g^{1-a}} \frac{ V_{g-1,1}}{ V_g} \right)
    = O \left( \frac{(\log g)^2}{g^{2-a}} \right)
  \end{equation*}
  by \cref{lemm: volumebounds}(4) and then \cref{lemm: volumebounds}(2-3).

  Finally, in the last case we have to estimate
  \begin{align*}
    & \sum_{\substack{g_1 + g_2 + g_3 = g \\ 1 \leq g_1 \leq g_2 \leq g_3}} V_{g_1,1} V_{g_2,1}V_{g_3,1} \\
    & = \sum_{g_1 = 1}^{\lfloor \frac{g-2}{3}\rfloor} V_{g_1,1}  \sum_{g_2 + g_3 = g-g_1} V_{g_2,1}V_{g_3,1}
    \leq C_{0} \sum_{g_1 = 1}^{\lfloor \frac{g-2}{3}\rfloor} \frac{V_{g_1,1} V_{g-g_1,0}}{g-g_1}
  \end{align*}
  where $C_{0}$ is the constant from \cref{lemm: volumebounds}(4). We observe that $g-g_1 \geq \frac{2}{3} g$ and use
  \cref{lemm: volumebounds}(3) to conclude that the probability is 
  \begin{equation*}
    O \left( \frac{(\log g)^2}{V_gg^{2-a}} \sum_{g_1=1}^{\lfloor \frac{g-2}{3}\rfloor} V_{g_1,1}V_{g-g_1,1} \right)
    = O \left( \frac{(\log g)^2}{g^{3-a}} \right)
  \end{equation*}
  by \cref{lemm: volumebounds}(4).  
 \end{proof}

 \begin{rema}
   In the cases (i), (iii) and (iv), there is a separating geodesic of length $\leq 2a \log g$. Therefore, we could
   have bounded these probabilities by the probability of having a separating geodesic of length $\leq 2a \log g$, which
   has been estimated by Mirzakhani in \cite[Theorem 4.4]{mirzakhani2013}. This approach yields the same end result, but
   the authors decided to detail the four cases for the sake of self-containment. Furthermore, this more detailed study
   allows us to see that the most likely cases are cases (i.) and (ii.), and therefore we expect the first length at
   which the surface is tangled to be obtained by one of these two topological situations. 
\end{rema}

\section{Geometry of tangle-free surfaces}
\label{sec:geometry}

The aim of this section is to provide information about geodesics and neighbourhoods of points on tangle-free surfaces. The
results will be expressed in terms of an arbitrary $L$-tangle-free surface $X$, but can also been seen as result that are true
with high probability for $L = a \log g$, $a < 1$ due to \cref{theo:random}.

\subsection{An improved collar theorem}
\label{sec:collars}

\begin{theo}
  \label{theo:neigh_cyl}
  Let $L>0$, and $X$ be a $L$-tangle-free hyperbolic surface. Let $\gamma$ be a simple closed geodesic of length
  $\ell < L$. Then, for $w := \frac{L - \ell}{2}$, the neighbourhood
  \begin{equation*}
    \mathcal{C}_w(\gamma) = \{ z \in X \, : \, \dist(z,\gamma) < w \}
  \end{equation*}
  is isometric to a cylinder.
\end{theo}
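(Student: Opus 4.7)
The plan is to study the critical width at which the Fermi neighborhood of $\gamma$ stops being an embedded cylinder, and to show that any failure before width $\frac{L-\ell}{2}$ forces the existence of a short embedded pair of pants or one-holed torus, in violation of the tangle-free assumption.

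I will define $w^\star := \sup\{w > 0 : \mathcal{C}_w(\gamma) \text{ is isometric to a cylinder}\}$. The standard collar theorem ensures $w^\star > 0$, and my goal is to show $w^\star \geq \frac{L-\ell}{2}$, so I assume for contradiction $w^\star < \frac{L-\ell}{2}$. Then the normal exponential map $\Phi : \gamma \times (-w,w) \to X$, which is an isometric embedding for each $w < w^\star$, must first fail to be injective when $w$ reaches $w^\star$. A continuity and compactness argument (using that normal geodesics emanating from a closed geodesic in constant negative curvature admit no conjugate points) then produces two distinct points $(s_1, \epsilon_1 w^\star)$ and $(s_2, \epsilon_2 w^\star)$ on the boundary of $\gamma \times [-w^\star, w^\star]$, with $\epsilon_i \in \{+1,-1\}$, which are identified by $\Phi$ at a common point $z \in X$. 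Concatenating the two outward perpendicular geodesic segments from $\gamma$ to $z$ yields a broken geodesic arc $\alpha \subset X$ of total length $2w^\star$ whose endpoints lie on $\gamma$.

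Next I analyse the topology of the embedded graph $G = \gamma \cup \alpha$, which has $2$ vertices (the feet of $\alpha$) and $3$ edges (the two arcs of $\gamma$ between the feet, together with $\alpha$), hence Euler characteristic $-1$. A small regular neighborhood $\Sigma \subset X$ is therefore an embedded subsurface with $\chi(\Sigma) = -1$; depending on whether the two perpendicular segments leave $\gamma$ on the same side ($\epsilon_1 = \epsilon_2$) or on opposite sides ($\epsilon_1 = -\epsilon_2$), the ribbon-graph structure makes $\Sigma$ an embedded pair of pants or a one-holed torus, respectively. In either configuration, each boundary component of $\Sigma$ is freely homotopic to a cycle in $G$, and collecting these cycles together traverses each edge of $G$ exactly twice, so the total piecewise-geodesic boundary length equals $2\ell + 4 w^\star$.

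To conclude, I must pass from piecewise geodesic representatives to totally geodesic ones. Each boundary component of $\Sigma$ is essential: if any component were null-homotopic, capping off its disk would produce an embedded annular neighborhood of $\gamma$ strictly wider than $w^\star$, contradicting the maximality of $w^\star$. Replacing each boundary by its unique geodesic representative shortens each component strictly (the piecewise geodesic curves have corners both at the feet of $\alpha$ and at $z$), yielding an embedded pair of pants or one-holed torus with totally geodesic boundary of total length strictly less than $2\ell + 4w^\star < 2L$, which contradicts the $L$-tangle-free hypothesis. The most delicate part will be the bookkeeping at the critical width $w^\star$: verifying that the first failure of injectivity places both pre-images on the maximal boundary, correctly distinguishing the two ribbon-graph cases, and ruling out null-homotopic boundary components by appealing to maximality.
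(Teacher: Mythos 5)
Your overall strategy coincides with the paper's: expand the Fermi collar to its critical width $w^\star$, extract a perpendicular arc $\alpha$ from the first failure of injectivity, form the regular neighbourhood $\Sigma$ of $\gamma\cup\alpha$ (which has $\chi=-1$), tighten its boundary to geodesics, and invoke the tangle-free hypothesis to force $2\ell+4w^\star>2L$. Two details are slightly off but inconsequential: the paper observes that since $c_1,c_2$ hit tangent copies of $\partial\mathcal{C}_{w^\star}$ perpendicularly, $\alpha=c_1^{-1}c_2$ is in fact a \emph{smooth} geodesic arc with no corner at $z$; and the paper does not assert (nor need) the clean correspondence between same/opposite side and pair of pants/one-holed torus, simply treating both possible topologies of $\Sigma$.

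The genuine gap is in your justification that the boundary components of $\Sigma$ are essential. Your argument --- that capping a null-homotopic boundary with a disc would produce an ``embedded annular neighbourhood of $\gamma$ strictly wider than $w^\star$'', contradicting maximality --- does not hold up. If $\Sigma$ is a pair of pants and one non-$\gamma$ boundary $\partial_1$ bounds a disc $D$, then $\Sigma\cup D$ is indeed an annulus containing $\gamma$, but it is just \emph{some} embedded annulus: it is not a Fermi collar $\mathcal{C}_w(\gamma)=\{\dist(\cdot,\gamma)<w\}$, and the disc $D$ has no reason to lie near $\gamma$ at all (it can be large and wander far from $\gamma$). The quantity $w^\star$ is defined by the Fermi collar specifically, and the existence of a larger embedded annulus around $\gamma$ does not contradict $w^\star$ being critical. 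Moreover, in the one-holed torus case the phrasing fails outright: capping its single boundary yields a closed genus-one surface, not an annulus.

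The correct arguments, as in the paper, are these. For $\Sigma$ a one-holed torus, a null-homotopic boundary would make $X$ a closed hyperbolic surface of signature $(1,0)$, which does not exist. For $\Sigma$ a pair of pants, each non-$\gamma$ boundary is freely homotopic to a bigon consisting of $\alpha$ together with a subarc of $\gamma$, meeting perpendicularly at both feet. Lifting to $\H$, the two perpendiculars to $\tilde{\gamma}$ at the distinct feet are disjoint geodesics (they admit $\tilde{\gamma}$ as common perpendicular), so the lifted bigon cannot close up; equivalently, Gauss--Bonnet gives such a right-angled geodesic bigon negative area if it bounded a disc. Either formulation shows the bigon, and hence the boundary component, is essential. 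You need one of these geometric facts; the maximality of $w^\star$ alone does not deliver it.
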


The collar theorem \cite{buser1992} is a similar result, with the width $\arcsinh \paren*{\sinh \paren*{\ell/2}^{-1}}$.

We recall that, in the random case, for $a<1$, with high probability, we can take $L = a \log g$. This result therefore
is a significant improvement for geodesics of length $b \log g$, $0 < b < a$. We obtain a collar of width
$w = \frac{a-b}{2} \log g$, which is expanding with the genus, as opposed to the deterministic collar, of width
$\simeq g^{-\frac b 2}$.

For very short geodesics, the width of this new collar is $\simeq \frac{a}{2} \log g$. It might seem less good than the
deterministic collar, which is of width $\simeq - \log (\ell)$. However, by Theorem 4.2 in \cite{mirzakhani2013}, the
injectivity radius of a random surface is greater than $g^{-\frac a 2}$ with probability $1 - O(g^{-a})$. Under this
additional probabilistic assumption, the two collars are of similar sizes.

\begin{proof}
  For small enough $w$, the neighbourhood $\mathcal{C}_w(\gamma)$ is a cylinder, with two boundary components
  $\gamma^\pm_w$.  Let us assume that, for a certain $w$, the topology of the neighbourhood changes. There are two ways
  for this to happen (and both can happen simultaneously) -- see \cref{fig:cylinder_reg_neig}.
  \begin{enumerate}
  \item[(A)] One boundary component, $\gamma_w^+$ or $\gamma_w^-$, self-intersects.
  \item[(B)] The two boundary components $\gamma^+_w$ and $\gamma^-_w$ intersect one another. 
  \end{enumerate}

  \begin{figure}[h]
    \centering
    \begin{subfigure}[b]{0.5\textwidth}
      \centering
      \includegraphics[scale=0.4]{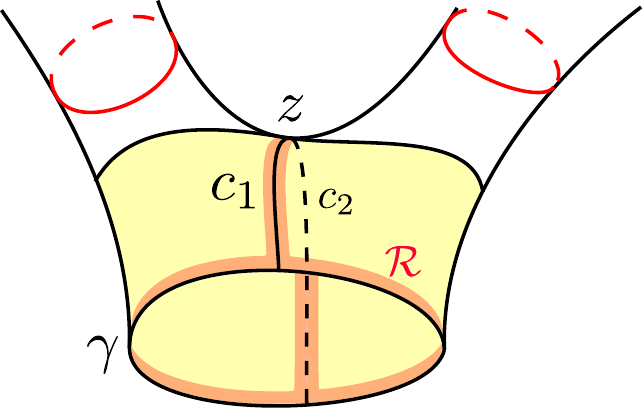}
      \caption{one side self-intersects}
      \label{fig:cylinder_reg_neig_1}
    \end{subfigure}%
    \begin{subfigure}[b]{0.5\textwidth}
      \centering
      \includegraphics[scale=0.4]{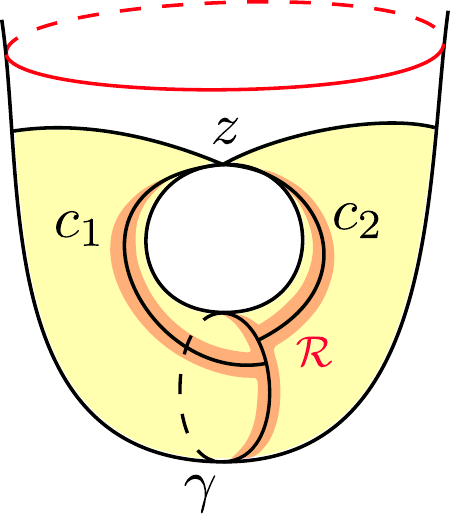}
      \caption{the two sides intersect one another}
      \label{fig:cylinder_reg_neig_2}
    \end{subfigure}%
    \caption{Illustration of the ways the isometry breaks down when expanding a cylinder around the geodesic $\gamma$.}
    \label{fig:cylinder_reg_neig}
  \end{figure}

  In both cases, let $z \in X$ denote one intersection point. Since the distance between $z$ and $\gamma$ is $w$, there
  are two distinct geodesic arcs $c_1$, $c_2$ of length $w$, going from $z$ to points of $\gamma$, and intersecting
  $\gamma$ perpendicularly. Both $c_1$ and $c_2$ are orthogonal to the boundaries of the cylinder and the two boundaries are
  tangent to one another by minimality of the width $w$. As a consequence, the curve $c = c_1^{-1}c_2$ is a geodesic arc.

  The regular neighbourhood of the curves $\gamma$ and $c$ has Euler characteristic $-1$. There are two possible
  topologies for this neighbourhood.
  \begin{itemize}
  \item If it is a pair of pants, then it has three boundary components. Neither of them is contractible on the surface
    $X$. Indeed, one component is freely homotopic to $\gamma$, and the two others to $c$ and a portion of $\gamma$,
    which are geodesic bigons. Therefore, when we replace the boundary components of the regular neighbourhood by the
    closed geodesic in their free homotopy classes, we obtain a pair of pants or a one-holed torus (if two of the boundary
    components are freely homotopic to one another), of total boundary length smaller than
    $2 \ell + 4 w$.
  \item Otherwise, it is a one-holed torus. Its boundary component is not contractible, because there is no hyperbolic surface of
    signature $(1,0)$. Therefore, the closed geodesic in its free homotopy class separates a one-holed torus with boundary length
    smaller than $2 \ell + 4 w$ from $X$.
  \end{itemize}
  In both cases, by the tangle-free hypothesis, $2L < 2\ell + 4 w$, which allows us to conclude. 
\end{proof}

\begin{rema}
  Let $\mathcal{A}_g \subset \mathcal{M}_g$ be the event ``the surface has a simple closed geodesic of length between
  $1$ and $2$''.  By work of Mirzakhani and Petri \cite{mirzakhani2019},
  \begin{equation*}
    \Pwp{\mathcal{A}_g} \underset{g \rightarrow + \infty}{\longrightarrow} 1- \exp\left(-\int_1^2 \frac{e^t + e^{-t} -2}{2t} \, \mathrm{d} t\right) > 0,
  \end{equation*}
  so this event has asymptotically non-zero probability.

  Let $X$ be an element of $\mathcal{A}_g$ which is also $(a \log g)$-tangle-free, and let $\gamma$ be a closed geodesic
  on $X$ of length $\ell \in [1,2]$. Then, the collar $\mathcal{C}_w(\gamma)$ given by \cref{theo:neigh_cyl} has
  volume
  \begin{equation*}
    \Vol (\mathcal{C}_w(\gamma)) = 2 \ell \sinh w \geq 2 \sinh \paren*{\frac{a}{2} \log g - 1} \sim g^{\frac a 2}
    \quad \text{as } g \rightarrow + \infty.
  \end{equation*}
  However, $\Vol (\mathcal{C}_w(\gamma)) \leq \Vol X = 2 \pi (2g-2)$.  This leads to a contradiction for $g$ approaching
  $+ \infty$ as soon as $a>2$. Hence, for large $g$, the elements of $\mathcal{A}_g$ are $(a \log g)$-tangled for $a>2$:
  \begin{equation*}
    \limsup_{g \rightarrow + \infty} \Pwp{X \text{ is (}a \log g \text{)-tangled}}
    \geq \lim_{g \rightarrow + \infty} \Pwp{\mathcal{A}_g} > 0.
  \end{equation*}
  Therefore, for all $a>2$, random surfaces do \emph{not} have high probability of being $(a \log g)$-tangle-free.

  By taking $a$ close to but larger than $1$, this same line of reasoning and the fact that we know surfaces to be
  $(a \log g)$-tangle-free with high probability implies that the improved collar cannot be much larger than $L -
  \ell$. As a consequence, our result is optimal up to multiplication by $2$.
\end{rema}

\subsection{Number of intersections of geodesics}
\label{sec:numb-inters-geod}

A consequence of this improved collar theorem is a bound on the number of intersections of a short closed geodesic with
any other geodesic.

\begin{coro}
  \label{coro: intersection of simple geodesics}
  Let $L >0$, and $X$ be a $L$-tangle-free hyperbolic surface.
  
  Let $\gamma$ be a simple closed geodesic of length $< L$ on $X$. Then, for any geodesic~$\gamma'$ transverse to
  $\gamma$, the number of intersections $i(\gamma,\gamma')$ between $\gamma$ and $\gamma'$ satisfies
  \begin{equation*}
    i(\gamma, \gamma') \leq \frac{\ell(\gamma')}{L-\ell(\gamma)} + 1.
  \end{equation*}
  In the case where $\gamma'$ is also closed, then
  \begin{equation*}
    i(\gamma, \gamma') \leq \frac{\ell(\gamma')}{L-\ell(\gamma)} \cdot
  \end{equation*}
  In  particular, if $\ell(\gamma) + \ell(\gamma') < L$, then $\gamma$ and $\gamma'$ do not intersect.
\end{coro}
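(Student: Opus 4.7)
The strategy is to exploit the wide embedded collar provided by \cref{theo:neigh_cyl}. Since $\gamma$ is a simple closed geodesic of length $\ell := \ell(\gamma) < L$, the neighbourhood $\mathcal{C}_w(\gamma)$ with $w = \frac{L-\ell}{2}$ is isometric to a standard hyperbolic cylinder. From this isometric embedding, I will extract a lower bound on the distance between distinct lifts of $\gamma$ in the universal cover $\mathbb{H}^2$, and then use it to bound the length of any arc of $\gamma'$ joining two consecutive intersection points with $\gamma$.

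The first step is to verify that any two distinct lifts of $\gamma$ in $\mathbb{H}^2$ are at distance at least $2w$. Indeed, if two lifts $\tilde{\gamma}_1 \neq \tilde{\gamma}_2$ were at distance $< 2w$, then the midpoint of their common perpendicular would lie within distance $<w$ of each. Projecting this midpoint down to $X$ would produce a point inside $\mathcal{C}_w(\gamma)$ with two distinct shortest geodesics to $\gamma$, contradicting the fact that $\mathcal{C}_w(\gamma)$ is isometric to a cylinder (where each point has a unique foot on the core).

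The second step is the key geometric observation. Consider two consecutive intersections $p_1, p_2$ of $\gamma'$ with $\gamma$ along the parametrisation of $\gamma'$, and lift the arc $c$ of $\gamma'$ between them to a geodesic arc $\tilde{c}$ in $\mathbb{H}^2$, with endpoints $\tilde{p}_1 \in \tilde{\gamma}_1$ and $\tilde{p}_2 \in \tilde{\gamma}_2$, where $\tilde{\gamma}_1, \tilde{\gamma}_2$ are lifts of $\gamma$. These two lifts must be distinct: otherwise $\tilde{c}$ would be a geodesic arc with both endpoints on a single hyperbolic geodesic, forcing $\tilde{c} \subset \tilde{\gamma}_1$ by the uniqueness of geodesics through two points, which contradicts the transversality of $\gamma$ and $\gamma'$. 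By the first step,
\begin{equation*}
\ell(c) = \dist_{\mathbb{H}^2}(\tilde{p}_1, \tilde{p}_2) \geq \dist_{\mathbb{H}^2}(\tilde{\gamma}_1, \tilde{\gamma}_2) \geq 2w = L - \ell(\gamma).
\end{equation*}

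The final step is a counting argument. If $\gamma'$ is a closed geodesic, then its $i := i(\gamma, \gamma')$ intersection points with $\gamma$ partition $\gamma'$ into $i$ circular subarcs, each of length $\geq L - \ell(\gamma)$, so $\ell(\gamma') \geq i(L - \ell(\gamma))$. For a general transverse geodesic $\gamma'$, the intersections split $\gamma'$ into $i-1$ interior subarcs (each contributing at least $L - \ell(\gamma)$) plus two end-pieces of nonnegative length, yielding $\ell(\gamma') \geq (i-1)(L-\ell(\gamma))$; rearranging produces the ``$+1$''. The last sentence follows from the closed case: if $\ell(\gamma) + \ell(\gamma') < L$ then $i < 1$, hence $i = 0$. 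The only step requiring genuine care is the first, which rests on correctly interpreting ``isometric to a cylinder'' as providing an embedded tube in the universal cover; the remainder is then a short picture.
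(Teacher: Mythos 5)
Your proof is correct and rests on the same core idea as the paper's -- both begin from \cref{theo:neigh_cyl} and reduce to the observation that any geodesic arc of $\gamma'$ crossing $\gamma$ must travel at least $2w = L - \ell(\gamma)$. The only difference is implementation: you reformulate the embedded collar as the statement that distinct lifts of $\gamma$ are at least $2w$ apart in $\mathbb{H}^2$ and measure the sub-arcs of $\gamma'$ between consecutive intersections with $\gamma$, whereas the paper decomposes $\gamma'$ into its maximal sub-arcs inside the cylinder $\mathcal{C}_w(\gamma)$ and bounds those directly; both lead to the same count and the same constants.
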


\begin{proof}
  By \cref{theo:neigh_cyl}, $\gamma$ is embedded in an open cylinder $\mathcal{C}$ of width
  $w = \frac{L - \ell(\gamma)}{2}$.

  Let us parametrize the geodesic $\gamma' : [0,1] \rightarrow X$. The set of times when $\gamma'$ visits the cylinder
  can be decomposed as
  \begin{equation*}
    \bigsqcup_{i=1}^k (t_i^-, t_i^+), \qquad 0 \leq t_1^- < t_1^+ \leq \ldots \leq t_k^- < t_k^+ \leq 1,
  \end{equation*}
  as respresented in \cref{fig:bound_number_int}.
  \begin{figure}[h]
    \centering
    \includegraphics[scale=0.35]{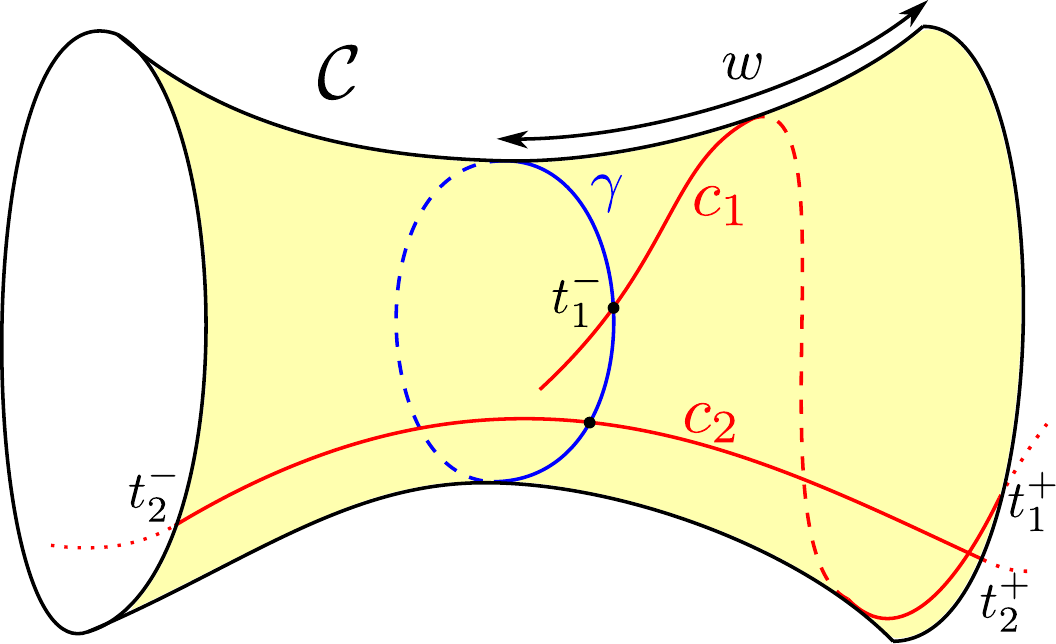}
    \caption{Illustration of the proof of \cref{coro: intersection of simple geodesics}.}
    \label{fig:bound_number_int}
  \end{figure}
  The restriction $c_i$ of $\gamma'$ between $t_i^-$ and $t_i^+$ is a geodesic in the cylinder $\mathcal{C}$,
  transverse to the central geodesic $\gamma$. Therefore, if $c_i$ intersects $\gamma$, then it does at most once. Let
  $I \subset \{ 1, \ldots, k \}$ be the set of $i$ such that $c_i$ intersect $\gamma$. We have that
  $i(\gamma, \gamma') = \# I \leq k$.

  We assume that $\#I \geq 2$ (otherwise their is nothing to prove).  Any geodesic intersecting the central geodesic
  transversally travels through the entire cylinder, and is therefore of length greater than $2 w$. As a consequence,
  for any $i \in I$ different from $1$ and $k$, $\ell(c_i) \geq 2w$. Also, if $i = 1$ or $k$ belongs in $I$, then
  $\ell(c_i) \geq w$. This leads to our claim, because
  \begin{equation*}
    (i(\gamma, \gamma') - 1) (L - \ell(\gamma)) = (\#I-1) \cdot 2w \leq \sum_{i \in I} \ell(c_i) \leq \ell(\gamma').
  \end{equation*}
  
  The case when the curve $\gamma'$ is closed can be obtained observing that, in this case, $\ell(c_1)$ and $\ell(c_k)$
  also are greater than $2w$ (when $1$ or $k$ belongs in $I$).
\end{proof}

Like the collars from the usual collar theorem, the collars of two small enough distinct geodesics are disjoint.

\begin{prop}
  \label{prop:disjoint_collar}
  Let $L>0$, and $X$ be a $L$-tangle-free hyperbolic surface.  Let $\gamma$, $\gamma'$ be two distinct simple closed
  geodesics such that $\ell(\gamma) + \ell(\gamma')< L$.  Then, the distance between $\gamma$ and $\gamma'$ is greater
  than $L - \ell(\gamma) - \ell(\gamma')$.

  In particular, if $\ell(\gamma), \ell(\gamma') < \frac L 2$, then the collars of width $\frac L 2 - \ell(\gamma)$
  around $\gamma$ and $\frac L 2 - \ell(\gamma')$ around $\gamma'$ are two disjoint embedded cylinders.
\end{prop}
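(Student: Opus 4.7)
I argue by contradiction. Set $d := \dist(\gamma,\gamma')$ and suppose
\[
d \;\leq\; L - \ell(\gamma) - \ell(\gamma').
\]
First, I note that $d>0$: indeed, $\ell(\gamma) + \ell(\gamma') < L$ puts us exactly in the regime of \cref{coro: intersection of simple geodesics}, so $\gamma$ and $\gamma'$ are disjoint (and since they are simple, they are distinct embedded loops at positive distance). Let $c$ be a minimizing geodesic arc realizing $\dist(\gamma,\gamma')$. Standard first-variation arguments show that $c$ has length $d$, meets $\gamma$ and $\gamma'$ perpendicularly at its two endpoints $p \in \gamma$, $p' \in \gamma'$, and does not meet $\gamma \cup \gamma'$ in its interior.

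Now I consider the graph $G := \gamma \cup c \cup \gamma'$, with $V=2$ vertices ($p$, $p'$) and $E=3$ edges ($\gamma$, $\gamma'$, $c$), so $\chi(G)=-1$. For $\varepsilon > 0$ small, a regular $\varepsilon$-neighborhood $N_\varepsilon$ of $G$ is an embedded orientable subsurface with boundary and with $\chi(N_\varepsilon) = -1$, hence topologically a pair of pants or a one-holed torus. A direct inspection of the fat-graph boundary shows that the total length of $\partial N_\varepsilon$ tends to $2(\ell(\gamma) + \ell(\gamma') + d)$ as $\varepsilon \to 0$: each edge of $G$ contributes its length twice. Replacing each boundary component by the closed geodesic in its free homotopy class (exactly as in the proof of \cref{theo:neigh_cyl}, with the same comments on the degenerate cases where two components are freely homotopic or one bounds a disk), I obtain a genuinely embedded pair of pants or one-holed torus $P \subset X$ of total boundary length at most $2(\ell(\gamma) + \ell(\gamma') + d)$. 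The $L$-tangle-free hypothesis then yields
\[
2L \;<\; 2\bigl(\ell(\gamma) + \ell(\gamma') + d\bigr),
\]
that is, $d > L - \ell(\gamma) - \ell(\gamma')$, contradicting the standing assumption.

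The part I expect to require the most care is the regular-neighborhood-to-embedded-subsurface step: one must check that the boundary components of $N_\varepsilon$ are not null-homotopic in $X$ (else the tightened object is a cylinder rather than a pair of pants or torus), and that freely homotopic components are dealt with correctly. These issues are handled exactly as in the proof of \cref{theo:neigh_cyl}; in particular, a null-homotopic boundary component would force $\gamma$ and $\gamma'$ to cobound an embedded annulus, in which case $d$ equals the width of that annulus and the same counting of edges again yields an embedded $P$ with total boundary $\leq 2(\ell(\gamma) + \ell(\gamma') + d)$.

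For the second assertion, $\ell(\gamma),\ell(\gamma') < L/2$ forces $\ell(\gamma)+\ell(\gamma') < L$, so the first part gives $\dist(\gamma,\gamma') > L - \ell(\gamma) - \ell(\gamma')$. By \cref{theo:neigh_cyl}, $\gamma$ sits inside an embedded cylinder of width $\tfrac{L-\ell(\gamma)}{2} > \tfrac{L}{2}-\ell(\gamma)$, and similarly for $\gamma'$, so the collars of widths $\tfrac{L}{2}-\ell(\gamma)$ and $\tfrac{L}{2}-\ell(\gamma')$ are each embedded cylinders. Their widths sum to $L-\ell(\gamma)-\ell(\gamma')$, which is strictly less than $\dist(\gamma,\gamma')$, so the two open cylinders are disjoint.
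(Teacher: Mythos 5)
Your proof is correct and follows essentially the same route as the paper: take the length-minimising arc $c$ between the disjoint geodesics $\gamma$ and $\gamma'$, observe that the regular neighbourhood of $\gamma \cup c \cup \gamma'$ has Euler characteristic $-1$ and tightens to an embedded pair of pants or one-holed torus of total boundary length $\leq 2(\ell(\gamma)+\ell(\gamma')+d)$, and invoke the tangle-free hypothesis. One small simplification: since $\gamma$ and $\gamma'$ are distinct closed geodesics they cannot be freely homotopic, so the third boundary component of the regular neighbourhood is automatically non-contractible, which makes the side discussion about a null-homotopic boundary forcing a cobounding annulus unnecessary.
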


\begin{figure}[h]
  \centering
  \includegraphics[scale=0.4]{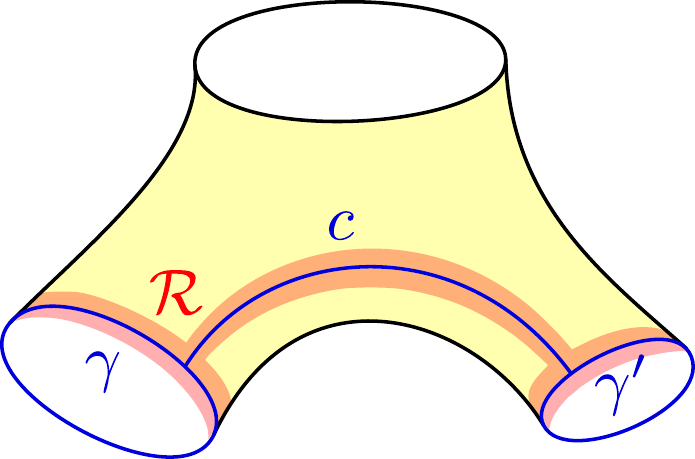}
  \caption{Illustration of the proof of \cref{prop:disjoint_collar}}
  \label{fig:disjoint_collar}
\end{figure}

\begin{proof}
  We already know, owed to \cref{coro: intersection of simple geodesics}, that $\gamma$ and $\gamma'$ do not
  intersect. Let~$c$ be a length-minimising curve with one endpoint on $\gamma$ and the other on $\gamma'$ (see
  \cref{fig:disjoint_collar}). Then, by minimality, $c$ is simple and only intersects $\gamma$ and $\gamma'$ at is
  endpoints. The regular neighbourhood $\mathcal{R}$ of $\gamma$, $\gamma'$ and $c$ is a topological pair of pants of
  total boundary length less than $2 (\ell(\gamma) + \ell(\gamma') + \ell(c))$.  Since $\gamma$ and $\gamma'$ are
  non-contractible and not freely homotopic to one another, the third boundary component is not contractible and
  $\mathcal{R}$ corresponds to an embedded pair of pants or one-holed torus on $X$.  By the tangle-free hypothesis,
  $\ell(\gamma) + \ell(\gamma') + \ell(c) \geq L$, and therefore the distance between $\gamma$ and $\gamma'$ is greater
  than $L - \ell(\gamma) - \ell(\gamma')$. This implies our claim.
\end{proof}

\subsection{Short loops based at a point}
\label{sec:short-loops-based}

Let us now study short loops based at a point on a tangle-free surface. 

 \begin{theo}
 \label{theo:loops}
 Let $L>0$, and $X$ be a $L$-tangle-free hyperbolic surface.  Let $z \in X$, and let $\delta_z$ be the shortest geodesic
 loop based at $z$.
 
 Let $\beta$ be a (non necessarily geodesic) loop based at $z$, such that
 $\ell(\beta) + \ell(\delta_z) < L$.
 Then $\beta$ is homotopic with fixed endpoints to a power of $\delta_z$. 
\end{theo}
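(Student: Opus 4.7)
The plan is to argue by contradiction: suppose there is a loop $\beta$ at $z$ with $\ell(\delta_z) + \ell(\beta) < L$ whose class $[\beta] \in \pi_1(X,z)$ is not a power of $[\delta_z]$. I will construct from $\delta_z$ and $\beta$ an embedded pair of pants or one-holed torus in $X$ of total boundary length strictly less than $2L$, violating the tangle-free hypothesis. First, replace $\beta$ by the unique geodesic loop in its fixed-endpoint homotopy class (this cannot increase its length), and among all such geodesic counter-examples choose $\beta_*$ of minimal length.

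The first substantive step is to show, by surgery, that $\beta_*$ is simple and meets $\delta_z$ only at $z$. If $\beta_*$ self-intersects at $p \neq z$, write $\beta_* = u_1 u_2 u_3$ with $u_2$ the sub-loop based at $p$ and decompose $[\beta_*] = [u_1 u_2 u_1^{-1}] \cdot [u_1 u_3]$. Each factor is represented by a loop at $z$ of length at most $\ell(\beta_*)$, and strictly less after replacement by its geodesic representative, since the broken concatenation has corners at a transverse self-intersection; as $[\beta_*] \notin \langle [\delta_z] \rangle$, at least one factor lies outside $\langle [\delta_z] \rangle$ as well, contradicting the minimality of $\beta_*$. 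The analogous splicing at a point $p \in \delta_z \cap \beta_* \setminus \{z\}$ yields three loops $\gamma_1, \gamma_2, \gamma_3$ at $z$ with $[\delta_z] = [\gamma_3][\gamma_2]$ and $[\beta_*] = [\gamma_3]^{-1}[\gamma_1]$, so not all three classes can lie in $\langle[\delta_z]\rangle$; again one obtains a shorter counterexample.

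With $\delta_z \cup \beta_*$ now an embedded figure-eight graph, its regular neighbourhood $\mathcal{N} \subset X$ is a connected orientable surface of Euler characteristic $-1$, hence topologically a pair of pants or a one-holed torus, and its boundary has total length arbitrarily close to $2(\ell(\delta_z) + \ell(\beta_*)) < 2L$. It remains to check that $\mathcal{N}$ is essential, so that replacing each boundary component by its geodesic representative gives an embedded pair of pants or one-holed torus in $X$ with geodesic boundary of total length less than $2L$. In the pair-of-pants case the three boundaries are freely homotopic to $\delta_z$, $\beta_*$ and $\delta_z\beta_*^{\pm 1}$; the first two are non-trivial, and triviality of the third would force $[\beta_*] = [\delta_z]^{\mp 1} \in \langle[\delta_z]\rangle$, contrary to assumption. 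In the one-holed torus case the boundary is freely homotopic to $\delta_z \beta_* \delta_z^{-1} \beta_*^{-1}$, whose triviality means $[\delta_z]$ and $[\beta_*]$ commute; a short hyperbolic trigonometry computation shows that the minimality of $\ell(\delta_z)$ forces $[\delta_z]$ to be primitive in $\pi_1(X,z)$, so commutation would again place $[\beta_*]$ in $\langle[\delta_z]\rangle$. Either way we obtain the desired contradiction with tangle-freeness.

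I expect the surgery of the second paragraph to be the main obstacle: tracking lengths and homotopy classes of the spliced loops so as to strictly contradict minimality of $\beta_*$ requires care, especially in borderline cases where the spliced concatenation has the same length as $\beta_*$ and one must appeal to the strict shortening that occurs when a broken geodesic is straightened. A secondary subtlety is the primitivity claim used in the one-holed torus case, which follows from the fact that $|k| \mapsto d(\tilde z, \gamma^k \tilde z)$ is strictly increasing for any hyperbolic $\gamma$ and any lift $\tilde z$.
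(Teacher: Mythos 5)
Your proposal takes a genuinely different route from the paper. The paper proceeds by double induction on the number of self-intersections of $\beta$, and in the base case it works with a single arc $\beta_i$ of $\beta$ between consecutive hits of $\delta_z$: the regular neighbourhood of $\delta_z\cup\beta_i$ already has Euler characteristic $-1$, so the tangle-free hypothesis forces each $\beta_i$ to be homotopic rel endpoints to a sub-arc of $\delta_z$, and $\beta$ becomes a word in arcs of $\delta_z$. This cleverly sidesteps the question of whether $\beta$ can be assumed disjoint from $\delta_z$ away from $z$. You instead argue by choosing a shortest geodesic counterexample $\beta_*$ and reducing to a figure-eight $\delta_z\cup\beta_*$. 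That strategy is coherent, and the final figure-eight/Euler-characteristic argument (including the essentiality checks and the primitivity of $[\delta_z]$) is correct. But there are concrete gaps in the surgery step which your ``requires care'' remark underestimates.

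First, in the self-intersection surgery the claim that ``each factor is represented by a loop at $z$ of length at most $\ell(\beta_*)$'' is false as stated: for $\beta_*=u_1u_2u_3$ one has $\ell(u_1u_2u_1^{-1})=2\ell(u_1)+\ell(u_2)$, which exceeds $\ell(\beta_*)=\ell(u_1)+\ell(u_2)+\ell(u_3)$ whenever $\ell(u_1)>\ell(u_3)$. One must first choose to conjugate by the \emph{shorter} of $u_1,u_3$ (i.e.\ use $[\beta_*]=[u_1u_3][u_3^{-1}u_2u_3]$ when $\ell(u_3)<\ell(u_1)$); this is fixable but not what you wrote. Second, and more seriously, the surgery at $p\in\delta_z\cap\beta_*\setminus\{z\}$ does not obviously hand you a shorter counterexample. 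Writing $\delta_z=d_1d_2$, $\beta_*=b_1b_2$ with $d_1,b_1$ the sub-arcs to $p$, the spliced loops have lengths $\ell(d_i)+\ell(b_j)$ which need not be below $\ell(\beta_*)$, and the ones that \emph{are} short might lie in $\langle[\delta_z]\rangle$. Making this work requires (i) normalising so $d_1,b_1$ are the shorter halves of $\delta_z,\beta_*$, (ii) using $\ell(\delta_z)\le\ell(\beta_*)$ to bound the spliced lengths, and (iii) handling the degenerate case where the spliced loop $d_1b_1^{-1}$ straightens to a contractible loop: that forces $d_1=b_1$ by uniqueness of geodesic representatives in a rel-endpoint homotopy class, which means $\delta_z$ and $\beta_*$ are tangent at $p$ rather than transverse and in fact share the entire arc up to $z$; one then has to observe that $\beta_*$ passes through $z$ at time $\ell(\delta_z)$ and split it there instead. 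None of this appears in your argument, and without it the phrase ``again one obtains a shorter counterexample'' is not justified. You flagged this step as the likely obstacle, which is the right instinct; but it is a genuine gap rather than mere bookkeeping, and the paper's choice to take regular neighbourhoods of $\delta_z$ together with a single arc of $\beta$ is precisely what makes those degenerate overlaps a non-issue.
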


The result is empty if the injectivity radius of the point $z$ is greater than $\frac L 2$.  The ``shortest geodesic
loop'' $\delta_z$ is not necessarily unique. It will be as soon as the injectivity radius at $z$ is smaller than
$\frac L 4$. More precisely, we directly deduce from \cref{theo:loops} the following corollary, which was used in
\cite{gilmore2019} for random surfaces (with a length $L = a \log g$, but the value of $a$ was not explicit). Note the similarity of this result to the classical Margulis lemma \cite{ratcliffe2019}. In particular, we obtain an explicit constant for the Margulis lemma in the case of tangle-free surfaces in the same way that the classical collar theorem provides.
\begin{coro}
  \label{cor:close_Gamma}
  Let $L>0$, and $X = \faktor{\H}{\Gamma}$ be an $L$-tangle-free hyperbolic surface. Then, for any $z \in \H$, the set
  $\{ T \in \Gamma \; : \; \dist_{\H} (z, T \cdot z) < \frac L 2 \}$ is:
  \begin{itemize}
  \item reduced to the identity element (when the injectivity radius at $z$ is $\geq \frac L 4$),
  \item or included in the subgroup $\subgp{T_0}$ generated by the element $T_0 \in \Gamma$ corresponding to
    the shortest geodesic loop through $z$.
  \end{itemize}
\end{coro}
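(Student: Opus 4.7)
My plan is to deduce this corollary directly from \cref{theo:loops} by translating the algebraic condition on $\Gamma$ into a geometric condition about geodesic loops on $X$, and then splitting into two cases according to the injectivity radius at $z$.

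First I would set up the dictionary between $\Gamma$ and loops based at the projection $\bar z \in X$ of $z$: any non-trivial $T \in \Gamma$ corresponds to a unique free homotopy class of loops based at $\bar z$, and the geodesic representative $\beta_T$ of this class has length exactly $\dist_\H(z, T \cdot z)$ (it is the projection to $X$ of the hyperbolic geodesic segment from $z$ to $T \cdot z$). Under this dictionary, $T \in \langle T_0 \rangle$ if and only if $\beta_T$ is homotopic with fixed endpoints to a power of $\delta_z$. So the statement reduces to showing: any geodesic loop $\beta$ at $\bar z$ of length $< L/2$ is homotopic to a power of $\delta_z$.

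If the injectivity radius at $\bar z$ is $\geq L/4$, then by definition the shortest geodesic loop at $\bar z$ has length $\ell(\delta_z) \geq L/2$. Since $\ell(\delta_z)$ is a lower bound for the length of any non-trivial geodesic loop based at $\bar z$, no non-trivial element $T$ satisfies $\dist_\H(z, T\cdot z) < L/2$, and the set is reduced to the identity. Otherwise, $\ell(\delta_z) < L/2$, and for any $T \in \Gamma$ with $\dist_\H(z, T\cdot z) < L/2$, the corresponding geodesic loop $\beta_T$ satisfies
\begin{equation*}
  \ell(\beta_T) + \ell(\delta_z) < \tfrac{L}{2} + \tfrac{L}{2} = L.
\end{equation*}
\Cref{theo:loops} then applies and yields that $\beta_T$ is homotopic with fixed endpoints to a power of $\delta_z$, hence $T \in \langle T_0 \rangle$.

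There is no serious obstacle here: the whole content is in \cref{theo:loops}, and the only care needed is to make the translation between deck-transformation displacements and geodesic loop lengths unambiguous and to correctly identify the threshold $L/4$ on the injectivity radius (half-length of the shortest loop) with the threshold $L/2$ on displacements. The case split is forced precisely so that the hypothesis $\ell(\beta) + \ell(\delta_z) < L$ of \cref{theo:loops} is available in the non-trivial case.
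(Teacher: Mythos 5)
Your proof is correct and follows exactly the route the paper indicates: the paper gives no explicit argument for \cref{cor:close_Gamma} beyond the remark that it is ``directly deduced'' from \cref{theo:loops}, and your proposal supplies precisely that deduction (the dictionary between deck transformations and fixed-endpoint homotopy classes of loops at $\bar z$, together with the case split on whether $\ell(\delta_z) < L/2$). One minor point of precision worth making: the loop $\beta_T$ should be described as the geodesic loop (geodesic except possibly at the basepoint) representing the fixed-endpoint homotopy class determined by $T$, and $\ell(\delta_z) = 2\,\mathrm{injrad}(z)$ is what identifies the $L/4$ threshold on the injectivity radius with the $L/2$ threshold on displacements.
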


We recall that any compact hyperbolic surface is isometric to a quotient of the hyperbolic plane $\H$ by a Fuchsian
co-compact group $\Gamma \subset \mathrm{PSL}_2(\R)$ -- see \cite{katok1992} for more details.

We could prove \cref{theo:loops} using the same method as we used for \cref{theo:neigh_cyl} and \cref{coro: intersection
  of simple geodesics}, expanding a cylinder around $\delta_z$. However, our initial proof used a different method,
which we decided to present here, in order to expose different ways to use the tangle-free hypothesis.

\begin{proof}[Proof of \cref{theo:loops}]
  \begin{figure}[h]
    \centering
    \begin{subfigure}[b]{0.5\textwidth}
      \centering
      \includegraphics[scale=0.4]{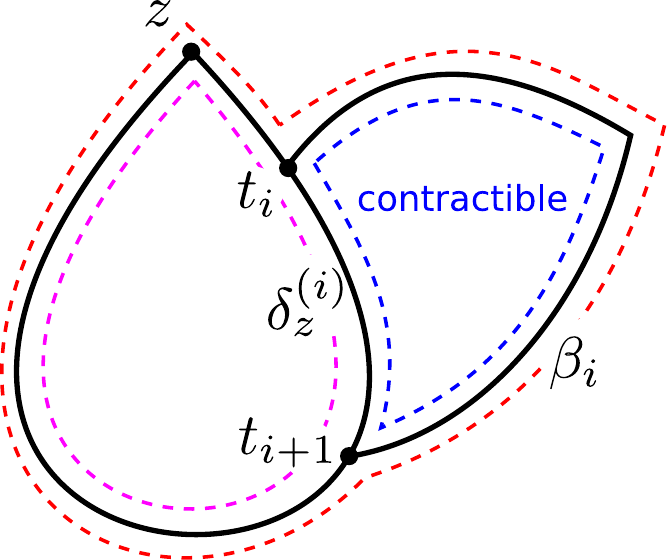}
      \caption{Case $k=0$.}
      \label{fig:proof_theo_loops_simple}
    \end{subfigure}%
    \begin{subfigure}[b]{0.5\textwidth}
      \centering
      \includegraphics[scale=0.4]{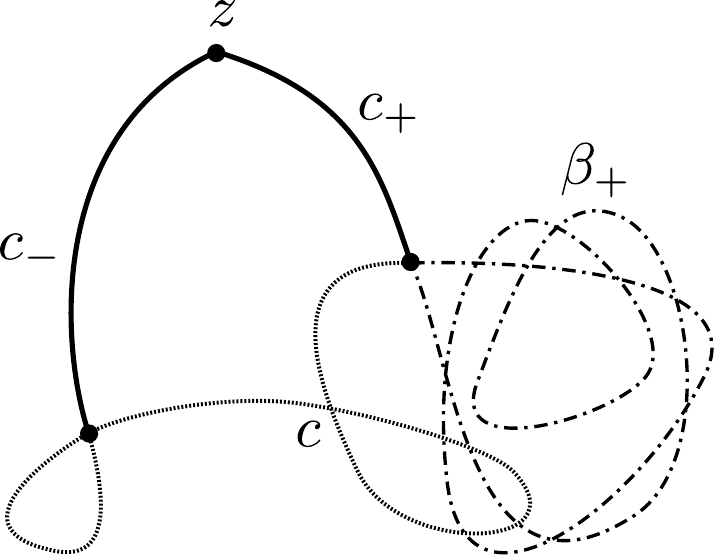}
      \caption{Case $k>0$.}
      \label{fig:proof_theo_loops_k}
    \end{subfigure}%
    \caption{Illustrations of the proof of \cref{theo:loops}.}
    \label{fig:proof_theo_loops}
  \end{figure}
  By replacing $\beta$ by a new curve in its homotopy class, we can assume that $\beta$ has a finite number of
  self-intersections, and of intersections with $\delta_z$, while still satisfying the length condition.
  
  We now prove this result by induction on the number of self-intersections $k\geq 0$ of $\beta$.  We start with the
  base case of $k=0$ so that $\beta$ is simple. We parametrise $\beta : [0,1] \rightarrow X$.  Let
  $0 = t_0 < t_1 < \ldots < t_I = 1$ be the times when $\beta$ meets $\delta_z$.
  
  Let $0 \leq i < I$, and $\beta_i$ be the restriction of $\beta$ to $[t_{i},t_{i+1}]$ -- see 
  \cref{fig:proof_theo_loops_simple}.  Then, the regular neighbourhood $\mathcal{R}$ of $\delta_z$ and $\beta_i$ has
  Euler characteristic $-1$, and total boundary length $\leq 2 (\ell(\delta_z) + \ell(\beta_i)) < 2 L$.  If
  $\mathcal{R}$ is a topological one-holed torus, then by the tangle-free hypothesis, its boundary component is contractible,
  which is impossible for there is no hyperbolic surface of signature $(1,0)$.
  
  Therefore, $\mathcal{R}$ is a topological pair of pants. By the tangle-free hypothesis, one of its boundary
  components is contractible.  It can not be the component corresponding to $\delta_z$, so it is another one. Hence,
  $\beta_i$ is homotopic with fixed endpoints to a portion $\delta_z^{(i)}$ of $\delta_z$.
  
  As a consequence, $\beta = \beta_0 \ldots \beta_{I-1}$ is
  homotopic with fixed endpoints to the product
  \begin{equation*}
    c = \delta_z^{(0)} \delta_z^{(1)} \ldots \delta_z^{(I-1)}.
  \end{equation*}
  $c$ goes from $z$ to $z$ following only portions of $\delta_z$. Therefore, $c$ is homotopic with fixed endpoints to a
  power $\delta_z^j$ of $\delta_z$.
  
  We now move forward to the case $k>0$. We assume the result to hold for any smaller $k$. The idea is to find a way to
  cut $\beta$ into smaller loops on which to apply the induction hypothesis; the construction is represented in 
  \cref{fig:proof_theo_loops_k}.
  
  Let $\ell = \ell(\beta)$. We pick a length parametrisation of $\beta : \R \diagup \ell \mathbb{Z} \rightarrow X$ such
  that $\beta(0) = z$. We look for the first intersection point of $\beta$, starting a $0$, but looking in both
  directions:
  \begin{align*}
    \ell_+ & = \min \{ t \geq 0 \, : \, \exists s \in (t,\ell) \text{ such that } \beta(s) = \beta(t) \} \\
    \ell_- & = \min \{ t \geq 0 \, : \, \exists s \in (t,\ell) \text{ such that } \beta(-s) = \beta(-t) \}.
  \end{align*}
  Up to a change of orientation of $\beta$, we can assume that $\ell_+ \leq \ell_-$. Then, we set
  \begin{equation*}
    t = \max \{ s \in (\ell_+,\ell) \, : \, \beta(s) = \beta(\ell_+) \}
  \end{equation*}
  to be the last time at which $\beta$ visits $\beta(\ell_+)$, so that the restriction of $\beta$ to $[\ell_+,t]$ is a
  loop $\beta_+$. The curve has no self-intersection between $\ell-\ell_-$ and $\ell$, so $t \leq \ell-\ell_-$.  Then,
  if we denote by $c_+$, $c$ and $c_-$ the respective restrictions of $\beta$ to $[0,\ell_+]$, $[t,\ell - \ell_-]$ and
  $[\ell-\ell_-,\ell]$, we can write $\beta = c_+ \, \beta_+ \, c \, c_- $, which is homotopic with fixed endpoints to
  $(c_+ \, \beta_+ \, c_+^{-1}) \, (c_+ \, c \, c_-)$.
  
  Let us apply the induction hypothesis to the two loops $c_+ \, \beta_+ \, c_+^{-1}$ and $c_+ \, c \, c_-$. It will
  follow that they, and hence $\beta$, are homotopic with fixed endpoints to a power of $\delta_z$.
  
  $\beta_+$ is a sub-loop of $\beta$. As a consequence, $c_+ \, c \, c_-$ has less self-intersections than $\beta$, and
  hence strictly less than $k$. Furthermore, it is shorter, so it satisfies the length hypothesis
  $\ell(c_+ \, c \, c_-) + \ell(\delta_z) < L$. So we can apply the induction hypothesis.
  
  $c_+$ is simple and does not intersect $\beta_+$ (except at its endpoint). As a consequence, we can find a curve $b$
  homotopic to $c_+ \, \beta_+ \, c_+^{-1}$ with as many self-intersections as $\beta_+$. $\beta_+$ is a strict sub-loop
  of $\beta$, so this intersection number is strictly smaller than $k$. The length of $b$ can be taken as close as
  desired to that of $c_+ \, \beta_+ \, c_+^{-1}$. Moreover,
  \begin{equation*}
    \ell(c_+ \, \beta_+ \, c_+^{-1})
    = 2 \ell_+ + \ell(\beta_+)
    \leq \ell_+ + \ell_- + \ell(\beta_+)
    \leq \ell(\beta)
  \end{equation*}
  so $b$ can be chosen to satisfy the length hypothesis $\ell(\delta_z) + \ell(b) < L$, and we can apply the induction
  hypothesis to it.
\end{proof}

\subsection{Neighbourhood of a point and graph definition}
\label{sec:neigbourhood-point}

Now that we know about short loops based at a point, we can understand the geometry (and topology) of balls on a
tangle-free surface.

\begin{prop}
  \label{prop:neigh_ball_cyl}
  Let $L>0$, and $X$ be a $L$-tangle-free hyperbolic surface. For a point $z$ in $X$, let
  $\mathcal{B}_{\frac L 8}(z) := \left\{ w \in X \, : \, \dist_X (z,w) < \frac L 8 \right\}$.  Then,
  $\mathcal{B}_{\frac L 8}(z)$ is isometric to a ball in either the hyperbolic plane (whenever the injectivity radius at
  $z$ is $\geq \frac L 8$) or a hyperbolic cylinder.
\end{prop}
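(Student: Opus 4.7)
The plan is to lift the picture to the universal cover and use Corollary \ref{cor:close_Gamma} to identify the relevant deck-transformation subgroup. Write $X = \faktor{\H}{\Gamma}$ as a quotient of the hyperbolic plane by a cocompact Fuchsian group, and pick a lift $\tilde z \in \H$ of $z$. Denote by $B$ the open hyperbolic ball of radius $\frac{L}{8}$ around $\tilde z$ in $\H$. A standard covering-space argument shows that $\mathcal{B}_{\frac{L}{8}}(z)$ is the image of $B$ under the covering projection $\pi : \H \to X$, with two points $w_1, w_2 \in B$ identified in $X$ if and only if $w_2 = T w_1$ for some non-trivial $T \in \Gamma$.

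In the easy case where the injectivity radius at $z$ is at least $\frac{L}{8}$, no such non-trivial identification occurs, so $\pi$ restricts to an isometry from $B$ onto $\mathcal{B}_{\frac{L}{8}}(z)$, giving the hyperbolic-plane case. Otherwise the injectivity radius at $z$ is $< \frac{L}{8} < \frac{L}{4}$, and Corollary \ref{cor:close_Gamma} applies: the set $\{T \in \Gamma : \dist_\H(\tilde z, T \tilde z) < \frac{L}{2}\}$ is contained in the cyclic subgroup $\subgp{T_0}$, where $T_0$ is the deck transformation associated to the shortest geodesic loop $\delta_z$ through $z$.

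Now for any $w_1, w_2 \in B$ with $T w_1 = w_2$, the triangle inequality gives
\begin{equation*}
  \dist_\H(\tilde z, T \tilde z) \leq \dist_\H(\tilde z, w_1) + \dist_\H(T w_1, T \tilde z) < \tfrac{L}{8} + \tfrac{L}{8} = \tfrac{L}{4} < \tfrac{L}{2},
\end{equation*}
so by the corollary $T \in \subgp{T_0}$. Hence the equivalence relation induced on $B$ by $\pi$ coincides with the one induced by the action of $\subgp{T_0}$. Since $\Gamma$ is cocompact Fuchsian, $T_0$ is hyperbolic, and the quotient $\mathcal{C} := \faktor{\H}{\subgp{T_0}}$ is a hyperbolic cylinder whose core geodesic is the axis of $T_0$. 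The projection $\H \to \mathcal{C}$ sends $B$ isometrically onto the open ball of radius $\frac{L}{8}$ around the image of $\tilde z$ in $\mathcal{C}$, and the induced map from this ball to $\mathcal{B}_{\frac{L}{8}}(z)$ is an isometry by the previous identification of equivalence relations.

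The only subtle step is the one powered by Corollary \ref{cor:close_Gamma}: without it, non-cyclic pieces of $\Gamma$ could glue $B$ onto itself in ways that would produce more complicated topology than a cylinder. Everything else is bookkeeping about universal covers and balls of radius $\frac{L}{8}$; the numerology $\frac{L}{8} + \frac{L}{8} < \frac{L}{2}$ is exactly what forces the radius $\frac{L}{8}$ in the statement.
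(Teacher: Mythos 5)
Your proof is correct, and it follows the same broad strategy as the paper: lift to the universal cover $\H$, regard $\mathcal{B}_{\frac L 8}(z)$ as the quotient of the lifted ball $B$ by the induced equivalence relation, and use \cref{cor:close_Gamma} to show that this equivalence relation comes from a single cyclic subgroup. The point where you genuinely diverge — and where your version is cleaner — is the way you invoke \cref{cor:close_Gamma}. The paper applies the corollary at an arbitrary lifted point $\tilde w \in B$, which produces some cyclic subgroup $\subgp{T_0(\tilde w)}$ a priori depending on $\tilde w$; the paper must then separately construct a path showing $T_\beta \in \Gamma_L(\tilde w)$ and invoke primitivity of $T_\beta$ to conclude $\subgp{T_0(\tilde w)} = \subgp{T_\beta}$. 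You instead apply the corollary once, at the centre $\tilde z$: the triangle inequality through $Tw_1 = w_2$ bounds $\dist_\H(\tilde z, T\tilde z) < \frac L 4 < \frac L 2$, so every identifying deck transformation lies directly in $\subgp{T_0}$ with $T_0$ the deck transformation of $\delta_z$. This removes the path-construction and primitivity steps.

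Two small slips worth fixing. First, the displayed chain is not literally a triangle inequality as written: the intermediate point should be $Tw_1 = w_2$, so the first summand should be $\dist_\H(\tilde z, Tw_1) = \dist_\H(\tilde z, w_2)$, not $\dist_\H(\tilde z, w_1)$. Both quantities are $< \frac L 8$, so the final bound is unaffected, but the inequality $\dist_\H(\tilde z, T\tilde z) \leq \dist_\H(\tilde z, w_1) + \dist_\H(Tw_1, T\tilde z)$ does not hold by the triangle inequality alone. Second, the sentence saying the projection $\H \to \mathcal{C}$ ``sends $B$ isometrically onto'' the ball in $\mathcal{C}$ is inaccurate in the non-trivial case — when $\subgp{T_0}$ identifies points of $B$, that restriction is not injective. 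What is true, and what your argument actually establishes, is that the quotient of $B$ by the $\subgp{T_0}$-action coincides with the quotient of $B$ by the $\Gamma$-action (since the two equivalence relations on $B$ agree), and each quotient is the corresponding $\frac L 8$-ball; that is the isometry you want.
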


In the second case, since the injectivity radius at $z$ is greater than $\frac L 8$, the ball
$\mathcal{B}_{\frac L 8}(z)$ is not contractible on $X$; it is therefore homeomorphic to a cylinder (see
\cref{fig:ball_cylinder}).

\begin{figure}[h]
  \centering
  \includegraphics[scale=0.4]{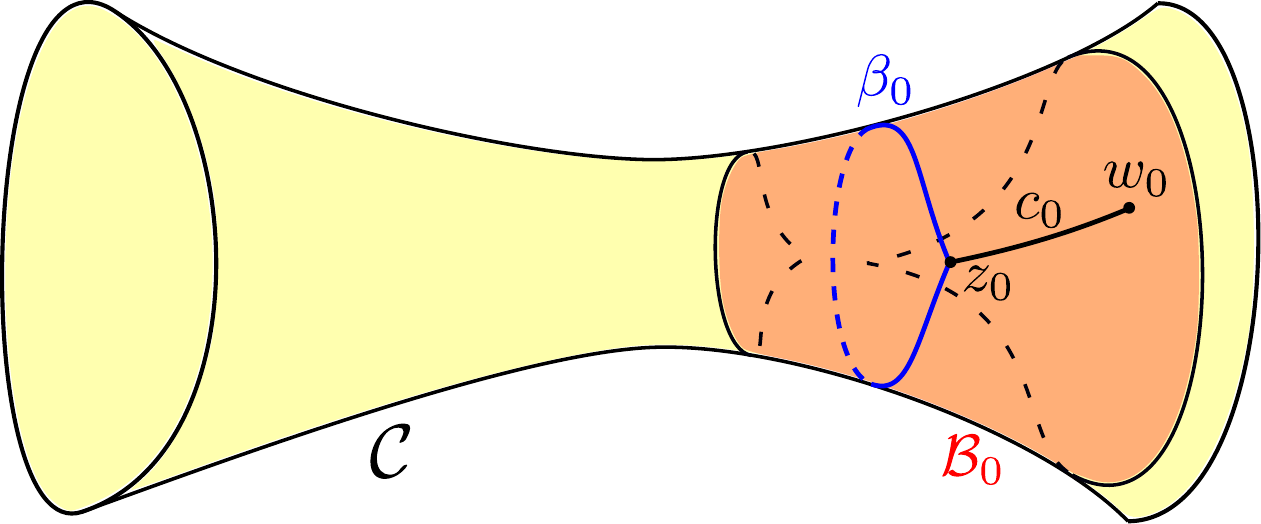}
  \caption{Illustration of the proof of \cref{prop:neigh_ball_cyl} in the cylinder~$\mathcal{C}$. Neighbourhoods
    of points of small injectivity radius on a tangle-free surface are isometric to balls in cylinders, like
    $\mathcal{B}_0$.}
  \label{fig:ball_cylinder}
\end{figure}

In a sense, this corollary proves that our notion of tangle-free implies the natural translation of the notion of
tangle-free for graphs. Indeed, the ball $\mathcal{B}_{\frac L 8}(z)$ has either no non-contractible geodesic loop, or
only one (and its iterates). We could have picked \cref{prop:neigh_ball_cyl} to be a definition for
tangle-free, but we consider the pair of pants definition to be both convenient to use and natural in the context of
hyperbolic geometry and the Weil-Petersson model.

\begin{proof}
  In order to prove this result, we will work in the universal cover $\H$ of~$X$. Let us write
  $X = \faktor{\H}{\Gamma}$, for a co-compact Fuchsian group $\Gamma$.

  Let $z$ be a point on $X$ of injectivity radius smaller than $\frac L 8$ (otherwise, the conclusion is immediate).
  Then, the shortest geodesic loop $\beta$ based at $z$ satisfies $\ell(\beta) < \frac L 4$.

  Let $\tilde z \in \H$ be a lift of $z$, $\tilde{\beta}$ be a lift of $\beta$ starting at $\tilde z$, and
  $\tilde{\mathcal{B}}$ be the ball of radius $\frac L 8$ around $\tilde z$ in $\H$. Let $T_\beta \in \Gamma$ be the
  covering transformation corresponding to $\beta$.  The quotient $\mathcal{C} = \faktor{\H}{\subgp{T_\beta}}$ is a
  hyperbolic cylinder. The ball $\tilde{\mathcal{B}}$ is projected on a ball $\mathcal{B}_0$ on $\mathcal{C}$. Let us
  prove that the projection from $\mathcal{B}_0$ on $\mathcal{C}$ to $\mathcal{B}$ on $X$ is an isometry.

  In order to do so, we shall establish that for any $\tilde{w} \in \tilde{\mathcal{B}}$, the set of transformations
  $T \in \Gamma$ such that $T \cdot \tilde w \in \tilde{\mathcal{B}}$ is included in $\subgp{T_\beta}$. Since any two
  points in $\tilde{\mathcal{B}}$ are at a distance at most $\frac L 4 < \frac L 2$, this will follow from proving
  \begin{equation*}
    \Gamma_L(\tilde{w}) := \left\{ T \in \Gamma \; : \; \dist_{\H}(\tilde{w}, T \cdot \tilde{w}) < \frac L 2 \right\} \subset \subgp{T_\beta}.
  \end{equation*}
  Let $c$ be the shortest path from $\tilde{w}$ to $\tilde z$. The path $c \; \tilde{\beta} \; (T_\beta \circ c^{-1})$ is a path
  from $\tilde{w}$ to $T_\beta \cdot \tilde{w}$. Its length is $2 \ell(c) + \ell(\beta) < 2 \times \frac L 8 + \frac L 4 = \frac L
  2$. As a consequence, $T_\beta$ belongs in $\Gamma_L(\tilde{w})$. Then, $\Gamma_L(\tilde{w})$ is not reduced to $\{ \id \}$.  By
  \Cref{cor:close_Gamma}, it is included in a cyclic subgroup $\subgp{T_0}$. $T_\beta$ hence is a power of
  $T_0$, but $T_\beta$ is primitive. Therefore, $T_\beta = T_0^{\pm 1}$, and the conclusion follows.
\end{proof}

\subsection{Short geodesics are simple}
\label{sec:short-geod}

 \begin{coro}
   \label{coro:simple}
   Let $L>0$, and $X$ be a $L$-tangle-free hyperbolic surface. Any primitive closed geodesic on $X$ of length $< L$ is
   simple.
 \end{coro}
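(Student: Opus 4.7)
The plan is to argue by contradiction. Suppose that a primitive closed geodesic $\gamma$ of length $<L$ has a self-intersection at some point $z \in X$. Then $z$ splits $\gamma$ into two geodesic loops based at $z$, which I will call $\beta_1$ and $\beta_2$, with $\ell(\beta_1) + \ell(\beta_2) = \ell(\gamma) < L$. The whole proof will reduce to applying \cref{theo:loops} to each $\beta_i$ and extracting a length contradiction from primitivity.

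To set up \cref{theo:loops}, I introduce $\delta_z$, the shortest geodesic loop based at $z$. Since $\delta_z$ minimizes loop length at $z$, we have $\ell(\delta_z) \leq \ell(\beta_i)$ for each $i$, so
\begin{equation*}
  \ell(\beta_i) + \ell(\delta_z) \leq \ell(\beta_1) + \ell(\beta_2) = \ell(\gamma) < L.
\end{equation*}
Thus \cref{theo:loops} applies and gives integers $n_1, n_2$ such that $\beta_i$ is homotopic with fixed endpoints to $\delta_z^{n_i}$. Neither $n_i$ can be zero, for a null-homotopic geodesic loop would lift in $\H$ to a geodesic arc with coinciding endpoints, hence be constant, contradicting $\ell(\beta_i)>0$.

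Concatenating the two homotopies yields $[\gamma] = [\beta_1][\beta_2] = [\delta_z]^{n_1+n_2}$ in $\pi_1(X,z)$, so $\gamma$ is freely homotopic to $\delta_z^{n_1+n_2}$. Primitivity enters precisely here: since $[\gamma]$ is non-trivial and not a proper power, we must have $n_1 + n_2 = \pm 1$. Hence $\gamma$ and $\delta_z^{\pm 1}$ are freely homotopic, and $\gamma$ is the (unique) closed geodesic representative in the free homotopy class of $\delta_z$. The length-minimizing property of closed geodesics in their free homotopy class then gives $\ell(\gamma) \leq \ell(\delta_z)$.

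Combining this with $\ell(\delta_z) \leq \ell(\beta_i)$, I obtain
\begin{equation*}
  \ell(\gamma) = \ell(\beta_1) + \ell(\beta_2) \geq 2\,\ell(\delta_z) \geq 2\,\ell(\gamma),
\end{equation*}
which contradicts $\ell(\gamma) > 0$. The only delicate point is invoking primitivity at the right moment to force $|n_1+n_2| = 1$; everything else is direct bookkeeping from \cref{theo:loops} together with the standard minimizing property of closed geodesics.
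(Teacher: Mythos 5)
Your proof is correct and follows the same overall strategy as the paper: decompose $\gamma$ at a self-intersection point $z$ into two loops, introduce the shortest geodesic loop $\delta_z$ at $z$, verify the length hypothesis of \cref{theo:loops} for both loops (using $\ell(\delta_z) \leq \ell(\beta_i)$), and conclude that $\gamma$ is freely homotopic to a power of $\delta_z$. Where you part ways with the paper is in the endgame. The paper observes that the geodesic $\gamma_0$ in the free homotopy class of $\delta_z$ is \emph{simple}, so that $\gamma = \gamma_0^j$ with $j \in \{0,1\}$ by primitivity, and then rules out both $j=0$ (non-contractibility) and $j=1$ (would make $\gamma$ simple). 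You instead extract $|n_1 + n_2| = 1$ directly from primitivity and then derive a \emph{length} contradiction: $\ell(\gamma) \leq \ell(\delta_z) \leq \ell(\beta_i)$ forces $\ell(\gamma) = \ell(\beta_1) + \ell(\beta_2) \geq 2\ell(\gamma)$. This is a nice, self-contained alternative that sidesteps the (true but unjustified in the paper) fact that $\gamma_0$ is simple. One small remark: your paragraph establishing that neither $n_i$ is individually zero is correct (each $\beta_i$ is a sub-arc of the closed geodesic $\gamma$ and so lifts to a genuine geodesic arc in $\H$), but it is not actually used --- your contradiction only needs $n_1 + n_2 \neq 0$, which follows from $\gamma$ being non-contractible.
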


 This consequence of Theorem \ref{theo:loops} can also be deduced from the fact that the shortest non-simple primitive
 closed geodesic on a compact hyperbolic surface is a figure eight geodesic \cite[Theorem 4.2.4]{buser1992}, which is
 embedded in a pair of pants or one-holed torus.

\begin{proof}
  Let us assume by contradiction that $\gamma$ is not simple; we can then pick an intersection point $z$. This allows us
  to write $\gamma$ as the product of two geodesic loops $\gamma_1$, $\gamma_2$ based at $z$. Since
  $\ell(\gamma_1) + \ell(\gamma_2) < L$, one of them is $<L/2$. Up to a change of notation, we take it to be $\gamma_1$.

  Let $\delta_z$ be the shortest geodesic loop based at $z$. By definition, $\ell(\delta_z) \leq \ell(\gamma_1)$. So
  $\gamma_1$ and $\gamma_2$ both satisfy the length hypothesis of \cref{theo:loops}:
  \begin{align*}
    \ell(\gamma_1) + \ell(\delta_z) & \leq 2 \ell(\gamma_1) < L \\
    \ell(\gamma_2) + \ell(\delta_z) & \leq \ell(\gamma) < L.
  \end{align*}
  Therefore, they are both homotopic with fixed endpoints to powers of $\delta_z$, which implies $\gamma$ is too. So
  $\gamma$ is freely homotopic to a power $j$ of the simple closed geodesic $\gamma_0$ in the free homotopy class of
  $\delta_z$. By uniqueness, $\gamma = \gamma_0^j$. $\gamma$ is primitive, so $j = 0$ or $1$. But $\gamma$ is not
  contractible (so $j \neq 0$) and not simple (so $j \neq 1$): we reach a contradiction, which allows us to conclude.
\end{proof}

\begin{rema}
  Put together, \cref{coro:simple} and \ref{coro: intersection of simple geodesics} imply that all primitive closed
  geodesics of length $< \frac L 2$ are simple and disjoint.  Any such family of curves has cardinality at most
  $2g-2$. But we know that the number of primitive closed geodesics of length $< \frac L 2$ on a fixed surface is
  equivalent to $\frac{2}{L} e^{\frac L 2}$ as $L \rightarrow + \infty$ \cite{huber1959,buser1992}. This can be seen as
  another indicator that, if $X$ is $L$-tangle-free of large genus, then we expect $L$ to be at most logarithmic in $g$.
\end{rema}

\section{Any surface of genus $g$ is ($4 \log g + O(1)$)-tangled}
\label{sec:upper_bound_L}

We recall that any surface is $L$-tangled for $L = \frac{3}{2}\mathcal{B}_g$, the Bers constant, because it can be
entirely decomposed in pairs of pants of maximal boundary length smaller than $\mathcal{B}_g$. The best known estimates
on the Bers constant $\mathcal{B}_g$ are linear in the genus~$g$ \cite{buser1992a,parlier2014}, which is pretty far off
the $c \log g$ we obtained for random surfaces.  This is not a surprise, because in order to prove that a surface is
tangled, we only need to find one embedded pair of pants or one-holed torus. In Buser and Parlier's estimates on~$\mathcal{B}_g$
\cite{buser1992,parlier2014}, the pair of pants decomposition is constructed by successively exhibiting short curves on
the surface; the first ones are of length $\simeq \log g$, but as the construction goes on, and we find $2g-2$ curves to
entirely cut the surface, a linear factor appears.

In our case, we only need to stop the construction as soon as we manage to separate a pair of pants.  Following Parlier's
approach in \cite{parlier2014} to bound the Bers constant, we prove the following.

\begin{prop}
  \label{prop:upper_bound}
  There exists a constant $C>0$ such that, for any $g \geq 2$, any compact hyperbolic surface of genus $g$ is $X$ is
  $L$-tangled for $L = 4 \log g + C$.
\end{prop}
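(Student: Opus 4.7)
The plan is to follow Parlier's method from \cite{parlier2014} of iteratively constructing a short multicurve on $X$ via a systole-type bound, stopping the process once an embedded pair of pants or one-holed torus is isolated.

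The starting point is a systole bound obtained by ball-packing: an open ball of radius $r$ centered at a point of injectivity radius greater than $r$ embeds in $X$ and has hyperbolic area $2\pi(\cosh r - 1)$, so comparing with $\Vol(X) = 4\pi(g-1)$ produces a point $x \in X$ with $\mathrm{injrad}_X(x) \leq \arccosh(2g-1) \leq \log g + O(1)$, and hence a simple closed geodesic $\gamma_1 \subset X$ with $\ell(\gamma_1) \leq 2\log g + O(1)$. If $\gamma_1$ separates off a one-holed torus, that torus is an embedded subsurface of total boundary length $\leq 2\log g + O(1) \leq 2L$ and the proof is complete.

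Otherwise, I would iteratively extend to a multicurve $\Gamma = \{\gamma_1, \gamma_2, \ldots\}$ by adding the shortest interior simple closed geodesic in a component of $X \setminus \Gamma$ of positive complexity, adapting the ball-packing argument to surfaces with totally geodesic boundary---restricting to the ``thick interior'' whose area is bounded below using the collar theorem \cite[Theorem 4.1.1]{buser1992} to control the boundary collars. Following Parlier's careful choice of curves \cite{parlier2014}, one can arrange that the partial decomposition satisfies $\sum_i \ell(\gamma_i) \leq 6(g-1)\log g + O(g)$. Once the construction is completed to a full pants decomposition of $X$, the sum of the total boundary lengths over all $2g-2$ pairs of pants equals $2\sum_i \ell(\gamma_i)$, so by a simple averaging argument the minimum total boundary among them is at most $2 \cdot 6(g-1)\log g/(2g-2) + O(1) = 6\log g + O(1)$, comfortably bounded by $2L = 8\log g + 2C$ once $C$ is large enough.

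The main technical obstacle lies in the uniform control on $\sum_i \ell(\gamma_i)$: each new curve, produced by the ball-packing argument on the cut surface, has length bounded in terms of the area of its thick interior, which shrinks as the total boundary grows. Ensuring that late curves in the iteration do not blow up requires precisely Parlier's quantitative bookkeeping, balancing the decrease of available thick area against the increase in cumulative boundary---this is the delicate step that both determines the final value of the constant $C$ and reproduces the leading $4\log g$ behavior.
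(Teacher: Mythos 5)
Your approach diverges from the paper's in an essential way, and there is a gap in it that I don't think can be repaired along the lines you sketch. The argument hinges on the claim that Parlier's method yields a full pants decomposition with $\sum_i \ell(\gamma_i) \leq 6(g-1)\log g + O(g)$, so that averaging over the $2g-2$ pairs of pants produces one of total boundary $\leq 6\log g + O(1)$. No such total-length bound is established in \cite{parlier2014}, nor does it follow from the cited sources: the known upper bounds on the Bers constant $\mathcal{B}_g$ (which only control the \emph{maximum} curve length) are linear in $g$ \cite{buser1992a,parlier2014}, and the lower bound $\mathcal{B}_g \geq \sqrt{6g}-2$ \cite[Theorem 5.1.3]{buser1992} already shows that some surfaces force a curve of length $\gtrsim \sqrt{g}$ into every decomposition. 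The iterated ball-packing you propose does not give logarithmic control on later curves: once the cut surface acquires long geodesic boundary, the shortest \emph{interior} simple closed geodesic can be comparable to the boundary length rather than to $\log(\mathrm{area})$ --- the region away from the standard boundary collars can be arbitrarily thin --- and it is precisely this blow-up that produces linear factors in Bers-type bounds. Note also that if your sum bound held, your argument would give $L = 3\log g + O(1)$, strictly improving the proposition; this is a warning sign.

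The paper avoids the need for a full decomposition by stopping after one step, which is the decisive idea. Cut $X$ along the systole $\gamma$, whose length is $\leq 2\log g + O(1)$ by Bavard's bound (\cref{lemm:bavard}). To avoid degeneracies from a very short systole, apply Parlier's extension lemma to produce a surface of the same topology with boundary components of length $\geq 1$ and all interior curve lengths only increased. Then expand the half-cylinder neighbourhood of one boundary component of length $\ell$: while it stays embedded its area is $\ell\sinh w$, so the area bound $4\pi(g-1)$ forces the critical width at which the isometry fails to satisfy $w \leq \log g + O(1)$. The failure produces either a self-intersection (an embedded pair of pants of total boundary $\leq 2\ell + 4w$) or a collision with the other boundary component (a one-holed torus on $X$ of boundary length $\leq 2\ell + 2w$). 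Either way the total is $\leq 2L$ with $L = \ell + 2w \leq 4\log g + O(1)$. Your opening move (the Bavard-type bound and the one-holed-torus check) is in the right spirit, but the point is that a single half-cylinder expansion from the cut systole already isolates a short pair of pants or one-holed torus, and no iteration or averaging is needed.
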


This goes to prove that random hyperbolic surfaces are almost optimally tangle-free, despite the possibility of having a
small injectivity radius.

The proof relies on the following two Lemmas, which are all used by Parlier~\cite{parlier2014}. 
\cref{lemm:bavard}, due to Bavard \cite{bavard1996}, allows us to find a small geodesic loop on our surface.

\begin{lemm}
  \label{lemm:bavard}
  Let $X$ be a hyperbolic surface of genus $g$. 
  For any $z \in X$, the length of the shortest geodesic loop through $z$ is smaller than
  \begin{equation*}
    2 \arccosh \paren*{\frac{1}{2 \sin \frac{\pi}{12g-6}}} = 2 \log g + O(1).
  \end{equation*}
\end{lemm}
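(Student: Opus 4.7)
The plan is a Dirichlet-domain packing argument in the universal cover, following Bavard. Write $X = \faktor{\H}{\Gamma}$ for a cocompact Fuchsian group $\Gamma \subset \mathrm{PSL}_2(\R)$, and fix a lift $\tilde z \in \H$ of $z$. Geodesic loops based at $z$ correspond bijectively to non-trivial elements $T \in \Gamma$, each of length $\dist_{\H}(\tilde z, T \cdot \tilde z)$. Writing $\ell$ for the length of the shortest geodesic loop at $z$, one therefore has
\[
  \ell = \min_{T \in \Gamma \setminus \{\id\}} \dist_{\H}(\tilde z, T \cdot \tilde z).
\]
The Dirichlet polygon $D$ centered at $\tilde z$ then has in-radius exactly $r := \ell / 2$, since its nearest wall is the perpendicular bisector of the segment from $\tilde z$ to the closest translate.

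I would next bound the combinatorics of $D$. Suppose $D$ has $n$ vertices (equivalently, $n$ sides). Identifying sides in pairs yields $X$ with $n/2$ edges, one face, and some number $V$ of vertex classes, so Euler's formula gives $V = n/2 + 1 - 2g$. Each vertex of $X$ comes from an intersection of at least three equidistant $\Gamma$-translates of $\tilde z$, so its preimage in $D$ contains at least three vertices; hence $n \geq 3V$. Combining yields $n \leq 12g - 6$. Meanwhile, by Gauss-Bonnet, $\area(D) = \area(X) = 4\pi(g-1)$.

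The core of the argument is an isoperimetric-type inequality: among all hyperbolic polygons with at most $n$ sides and fixed area, the in-radius is maximal for the regular $n$-gon. Decomposing $D$ into $n$ triangles with common apex $\tilde z$, the apex angles sum to $2\pi$ and each triangle has apothem (distance from $\tilde z$ to the opposite side) at least $r$. A concavity argument on the hyperbolic formula relating area, apex angle and apothem then shows that $\area(D) \geq n \cdot \area(T_n)$, where $T_n$ is the isosceles triangle with apex angle $2\pi/n$ and apothem $r$, with equality only in the regular case. Applied to $D$, this forces $r \leq r^\star$, the apothem of the regular $(12g-6)$-gon of area $4\pi(g-1)$.

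It remains to compute $r^\star$ explicitly. For a regular $n$-gon with interior angle $\beta$, Gauss-Bonnet reads $(n-2)\pi - n\beta = 4\pi(g-1)$; with $n = 12g - 6$ this solves to $\beta = 2\pi / 3$. Splitting the regular polygon into $2n$ congruent right triangles with angles $\pi/n$ (at the center), $\pi/2$ (at the midpoint of a side) and $\beta/2 = \pi/3$ (at a vertex), the standard hyperbolic right-triangle identity $\cos(\beta/2) = \sin(\pi/n) \cosh(r^\star)$ gives $\cosh(r^\star) = 1/(2\sin(\pi/(12g-6)))$ and hence the claimed bound. The asymptotic $2\log g + \O{1}$ then follows from $\sin(\pi/(12g-6)) \sim \pi/(12g-6)$ and $\arccosh(x) = \log(2x) + o(1)$ as $x \to \infty$. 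The main technical obstacle is the isoperimetric inequality of the third paragraph, where one must carefully handle non-convex or irregular Dirichlet polygons to justify the comparison with the regular model.
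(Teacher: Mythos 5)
The paper does not actually prove this lemma: it is imported verbatim from Bavard \cite{bavard1996} (via Parlier \cite{parlier2014}), so there is no internal proof to compare against. Your sketch is a faithful reconstruction of Bavard's original argument, and its skeleton is correct: the correspondence between geodesic loops at $z$ and nontrivial elements of $\Gamma$, the fact that every side of the Dirichlet domain $D$ lies on a bisector at distance $\geq \ell/2$ from $\tilde z$, the bound $n \leq 12g-6$ on the number of sides via Euler's formula together with the fact that at least three tiles meet at each vertex, the area $\area(D) = 4\pi(g-1)$, and the trigonometry of the regular $(12g-6)$-gon (interior angle $2\pi/3$, apothem $r^\star$ with $\cosh r^\star = 1/(2\sin\frac{\pi}{12g-6})$) are all right, as is the final asymptotic.

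The genuine gap is exactly the step you flag yourself: the comparison $\area(D) \geq n \cdot \area(T_n)$ \emph{is} Bavard's theorem, and ``a concavity argument then shows'' is not a proof of it. Concretely two inequalities are being asserted. First, a triangle with apex $\tilde z$, apex angle $\theta$ and base on a geodesic at distance $d \geq r$ must have area at least $F(\theta,r) := \pi - \theta - 2\arccos(\cosh r \, \sin(\theta/2))$; pushing the base in to distance exactly $r$ is harmless (the relevant half-planes are nested), but the reduction to the isosceles position needs the concavity of $\alpha \mapsto \arccos(\cosh r \sin \alpha)$ and a separate treatment of the case where the foot of the perpendicular from $\tilde z$ falls outside the base. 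Second, the Jensen step $\sum_i F(\theta_i,r) \geq n\, F(2\pi/n, r)$ needs the convexity of $\theta \mapsto F(\theta,r)$, plus the existence of the comparison triangle (i.e.\ $\cosh r \, \sin(\pi/n) \leq 1$) in the regime where you run the contradiction. None of this is false --- it is precisely what Bavard establishes --- but as written the crux of the proof is asserted rather than proved. Note finally that the paper only ever uses the weak form $2\log g + \O{1}$, for which all of this machinery can be bypassed: the open ball of radius $r = \ell/2$ about $z$ is embedded, so $2\pi(\cosh r - 1) \leq 2\pi(2g-2)$, giving $\ell \leq 2\arccosh(2g-1) = 2\log g + \O{1}$ in three lines.
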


Some problems will arise in the proof if the geodesic loop we obtain using this result is too small. These difficulties
can be solved by assuming a lower bound on the injectivity radius of the surface; for instance, for random surfaces,
with high probability, one can assume that the injectivity radius is bounded below by $g^{-\epsilon}$ for a
$\epsilon > 0$ \cite{mirzakhani2013}. However, such an assumption makes the final inequality weaker.

Another way to fix this issue, used in \cite{parlier2014}, is to expand all the small geodesics, and by this process
obtain a new surface, with an injectivity radius bounded below, and in which the lengths of all the curves are
longer. For our purposes, we only need to expand one curve. This is achieved by the following Lemma.

\begin{lemm}[Theorem 3.2 in \cite{parlier2005}]
  Let $S_{g,n}$ be a base surface with $n>0$ boundary components.  Let
  $(X,f) \in \mathcal{T}_{g,n}(\ell_1, \ldots, \ell_n)$ and $\epsilon_1, \ldots, \epsilon_n \geq 0$.  Then, there exists
  a marked hyperbolic surface $(\tilde X, \tilde f)$ in
  $\mathcal{T}_{g,n}(\ell_1+\epsilon_1, \ldots, \ell_n + \epsilon_n)$ such that, for any closed curve $c$ on the base
  surface $S_{g,n}$, $\ell_X(c) \leq \ell_{\tilde X}(c)$.
\end{lemm}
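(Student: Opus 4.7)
The plan is to mimic Parlier's iterative short-curve construction from \cite{parlier2014}, but to stop as soon as a single embedded pair of pants or one-holed torus has been isolated rather than continuing to a full pants decomposition. Since only a constant number of short curves are needed to cut off one subsurface of signature $(0,3)$ or $(1,1)$, and since each curve will cost length $2\log g + O(1)$, the total boundary length of the cut-off piece will be $O(\log g)$ with an explicit small multiplicative constant, yielding the stated bound $L = 4\log g + O(1)$.

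\textbf{Step 1 (produce a first short curve).} Pick any point $z_0 \in X$ and apply \cref{lemm:bavard} to obtain a geodesic loop at $z_0$ of length at most $2\log g + O(1)$. The closed geodesic $\alpha_1$ in its free homotopy class is no longer, so $\ell(\alpha_1) \leq 2\log g + O(1)$. If $\alpha_1$ is non-simple, its self-intersection produces a figure-eight whose regular neighbourhood is an embedded pair of pants of total boundary length at most $2\ell(\alpha_1) \leq 4\log g + O(1)$, and we are done. So we may assume $\alpha_1$ is simple; if moreover $\alpha_1$ bounds a one-holed torus in $X$, we are done immediately with total boundary length $\leq 2\log g + O(1)$.

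\textbf{Step 2 (cut, expand, iterate).} Otherwise, cut $X$ along $\alpha_1$ to obtain a possibly disconnected surface with one or two boundary components of length $\ell(\alpha_1)$. Apply the expansion lemma (Theorem~3.2 in \cite{parlier2005}) to enlarge these boundary lengths up to, say, $1$, producing a new marked surface in whose geometry every curve from the original is no shorter. This step is essential to avoid the Bavard-type estimate being vacuous near a very thin collar. On each resulting component, pick an interior point and apply a version of \cref{lemm:bavard} (adapted to surfaces with boundary by doubling or by applying Bavard's argument on the component directly) to obtain a further short geodesic $\alpha_2$ of length $\leq 2\log g + O(1)$. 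Either $\alpha_1 \cup \alpha_2$ already bounds an embedded pair of pants or one-holed torus in $X$ (in which case we are done with total boundary at most $4\log g + O(1)$), or we cut along $\alpha_2$ and repeat once more to produce $\alpha_3$. Since a pair of pants has only three boundary components, the process must terminate after at most three iterations, giving a pair of pants or one-holed torus of total boundary length at most $6\log g + O(1) < 8\log g + O(1) = 2L$.

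\textbf{Main obstacle.} The subtlety is not the length bookkeeping, which is straightforward, but ensuring that the short curves produced by successive applications of Bavard genuinely combine into an embedded subsurface of the original $X$. Two issues arise: first, short curves appearing in the expanded surface must be transferred back to $X$ without length increase, which is exactly the content of the expansion lemma (lengths of closed curves only grow under expansion, so the pulled-back geodesics in $X$ are at most as long); second, one must check topologically that after at most three curves the cut surface has isolated either a pair of pants or a one-holed torus, rather than producing cancellations. A small case analysis on the topological type of the complement of $\alpha_1$ (non-separating vs.\ separating into two non-trivial pieces) handles this, and in each case one can arrange to pick $\alpha_2$ (and possibly $\alpha_3$) so that together with $\alpha_1$ they bound such a simple subsurface, exactly as in Parlier's proof.
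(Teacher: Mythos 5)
Your proposal does not address the statement it is supposed to prove. The statement is the boundary-expansion lemma (Theorem~3.2 in \cite{parlier2005}): given a marked hyperbolic surface with geodesic boundary of lengths $\ell_1,\dots,\ell_n$, one can produce a new marked surface with boundary lengths $\ell_1+\epsilon_1,\dots,\ell_n+\epsilon_n$ in which no closed curve has become shorter. What you have written instead is a sketch of \cref{prop:upper_bound} (that every genus-$g$ surface is $(4\log g+O(1))$-tangled), and in Step~2 you explicitly \emph{invoke} ``the expansion lemma (Theorem~3.2 in \cite{parlier2005})'' as a black box --- that is, you assume the very statement you were asked to prove. Nothing in your text constructs the surface $(\tilde X,\tilde f)$ or establishes the length monotonicity $\ell_X(c)\leq\ell_{\tilde X}(c)$.

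For the record, the paper itself does not prove this lemma either; it is quoted directly from \cite{parlier2005}. If you did want to prove it, the argument is of a completely different nature from what you wrote: Parlier's proof proceeds by a deformation of the hyperbolic structure (a ``strip'' or ``widening'' construction along arcs meeting the boundary) under which the boundary components lengthen by the prescribed amounts while every closed geodesic can only get longer, since any closed curve must cross the inserted strips a nonnegative number of times. No appeal to Bavard's lemma, to short closed geodesics, or to pants decompositions is involved. I would also note that your sketch of \cref{prop:upper_bound} diverges from the paper's actual proof of that proposition (which cuts along the systole once and then runs a half-cylinder expansion argument, rather than iterating Bavard's lemma up to three times), but since that is not the statement under review I will not assess it further here.
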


We are now able to prove the result.

\begin{proof}
  Let $\gamma$ be the systole of $X$ which is necessarily simple.  We cut the surface $X$ along this curve, and obtain a
  (possibly disconnected) hyperbolic surface $X_{\mathrm{cut}}$ with two boundary components. By the extension Lemma
  (applied to both components separately if need be), there exists a surface $X_{\mathrm{cut}}^+$ such that:
  \begin{itemize}
  \item the boundary components $\beta_1$, $\beta_2$ in $X_{\mathrm{cut}}^+$ are of length
    $ 1 \leq \ell \leq 2 \log g + O(1)$.
  \item for any closed curve $c$ not intersecting $\gamma$,
    $\ell_{X_{\mathrm{cut}}}(c) \leq \ell_{X_{\mathrm{cut}}^+}(c)$.
  \end{itemize}
  We shall find a pair of pants in $X_{\mathrm{cut}}^+$, and use the relationship between lengths in $X$ and
  $X_{\mathrm{cut}}^+$ to conclude.

  For $w > 0$, let us consider the $w$-neighbourhood of one component $\beta_1$ of the boundary of $X_{\mathrm{cut}}^+$
  \begin{equation*}
    \mathcal{C}_w (\beta_1) = \{ z \in X_{\mathrm{cut}}^+ \, : \, \dist(z, \beta_1) < w \}.
  \end{equation*}
  For small enough $w$, $\mathcal{C}_w(\beta_1)$ is a half-cylinder. However, there is a $w$ at which this isometry
  stops. This $w$ can be bounded by a volume argument: as long as $\mathcal{C}_w$ is a half-cylinder,
  \begin{equation*}
    \Vol (\mathcal{C}_w(\beta_1)) = \ell \sinh w \leq \Vol X = 2 \pi(2g-2).
  \end{equation*}
  However, $\ell \geq 1$. It follows that $w \leq \log g + O(1)$.

  There are two reasons for this isometry to stop.
  \begin{itemize}
  \item The half-cylinder self-intersects inside the surface (see \cref{fig:cylinder_reg_neig_1}). Then, one can
    construct an embedded pair of pant on $X_{\mathrm{cut}}^+$, of total boundary length $\leq 2\ell + 4 w$. This pair
    of pant will also be one on $X$, with shorter boundary components.
  \item The half-cylinder reaches the boundary of $X_{\mathrm{cut}}^+$. It can only do so by intersecting the component
    $\beta_2$. Then, one can construct an embedded pair of pant on $X_{\mathrm{cut}}^+$ of boundaries shorter than $\ell$,
    $\ell$, and $2 \ell + 2 w$, which corresponds to a one-holed torus on $X$, of boundary shorter than $2 \ell + 2 w$ (see
    \cref{fig:cylinder_reg_neig_2}, but expanding only a half-cylinder).
  \end{itemize}

  We can conclude that the surface $X$ is $L$-tangled, for $L =  \ell +  2w \leq 4 \log g + O(1)$.
\end{proof}

\bibliographystyle{plain}
\bibliography{bibliography}

\end{document}